\documentclass[11pt]{article}

\usepackage[T1]{fontenc}
\usepackage{lmodern}
\usepackage[margin=1in]{geometry}
\usepackage{amsthm,amsmath,amsfonts,amssymb,mathtools}
\usepackage{booktabs,array,multirow}
\usepackage{graphicx,subcaption}
\usepackage{xcolor,enumitem}
\usepackage{microtype}
\usepackage[authoryear,round]{natbib}
\usepackage[colorlinks=true,allcolors=blue]{hyperref}

\theoremstyle{plain}
\newtheorem{theorem}{Theorem}[section]
\newtheorem{proposition}[theorem]{Proposition}
\newtheorem{corollary}[theorem]{Corollary}
\newtheorem{lemma}[theorem]{Lemma}
\theoremstyle{definition}
\newtheorem{assumption}[theorem]{Assumption}
\newtheorem{definition}[theorem]{Definition}
\newtheorem{remark}[theorem]{Remark}

\newcommand{\R}{\mathbb R}
\newcommand{\E}{\mathbb E}
\newcommand{\Pp}{\mathbb P}
\newcommand{\op}{\mathrm{op}}
\newcommand{\tr}{\mathrm{tr}}
\newcommand{\diag}{\mathrm{diag}}
\newcommand{\Risk}{\mathcal R}
\newcommand{\Rbar}{\overline R}
\newcommand{\dPCR}{\mathrm{dPCR}}
\newcommand{\PCR}{\mathrm{PCR}}
\newcommand{\1}{\mathbf 1}
\newcommand{\norm}[1]{\left\lVert#1\right\rVert}
\newcommand{\ip}[2]{\left\langle#1,#2\right\rangle}
\DeclareMathOperator*{\argmin}{arg\,min}

\mathtoolsset{showonlyrefs}
\title{De-floored Principal Component Regression: When Rank Selection Alone Is Insufficient for Prediction}
\author{Peng Zhao\\
\small Department of Applied Economics and Statistics, University of Delaware\\
}
\date{}

\begin{document}
\maketitle

\begin{abstract}
Principal component regression (PCR) regularizes high-dimensional prediction by
choosing a spectral cutoff, but rank selection cannot correct systematic
inflation of the retained empirical eigenvalues.  We study clean Gaussian
random designs in which the aggregate covariance tail creates a nearly scalar
sample-space floor comparable to the predictive head scale.  De-floored
principal component regression (dPCR) retains the cutoff and subtracts an
estimated floor from the retained denominators.  We prove an ordinary-PCR
prediction-risk lower bound uniform over all ranks and a high-probability dPCR
upper bound.  When the floor is sharp and inexpensive to remove in population
prediction risk, the conditional risk of dPCR is asymptotically negligible
relative to that of the best ordinary PCR rank.  An exact risk decomposition
explains the separation: denominator inflation is governed by first spectral
mass, whereas the clean prediction cost of correction is governed by squared
spectral mass.  A same-sample trimmed-mean floor estimate attains the oracle
dPCR upper-bound rate at a prespecified rank, and the separation persists under
approximate predictive alignment when the tail prediction-energy fraction
vanishes. 
Separate pointwise fixed-aspect formulas show that the risk-optimal positive
scalar correction improves rank-$1$ PCR, whereas mean-floor subtraction is
generally not optimal for a broad Marchenko--Pastur bulk.
\end{abstract}

\medskip

\section{Introduction}
\label{sec:intro}

PCR regularizes high-dimensional prediction by choosing how many empirical
spectral directions to retain.  Rank selection does not control a second source
of risk: the empirical eigenvalues in the retained inverse can themselves be
systematically inflated by many weak directions. Existing PCR theory usually places this inflation in one of three regimes: it
is negligible at the retained signal scale, it is interpreted as beneficial
self-induced regularization, or it belongs to a broad
Marchenko--Pastur spectrum \citep{marchenko1967distribution} that is intrinsically non-scalar.  This paper studies
the missing intersection in which the inflation is nonnegligible, sharp,
matched to predictive signal, and inexpensive to remove.
Table~\ref{tab:missing-regime} locates this regime by floor size and sharpness, with
squared tail mass determining whether correction is affordable.

This failure is easiest to see in sample space.  After diagonalizing the
population covariance, write the Gram matrix as $K=K_H+K_T^{(1)}$, where $K_H$
contains the leading population-eigenvalue block and $K_T^{(1)}$ aggregates the
remaining weak spectral coordinates.  We assume that prediction signal is
strongly aligned with this population head.  When the tail has high effective
rank, $K_T^{(1)}$ can be close to $aI_n$ even though no population eigenvalue equals
$a$ \citep{bartlett2020benign,hastie2022surprises}.  A predictive head term with
realized scale $s_j$ is then observed near $s_j+a$.  PCR normalizes it
by $s_j+a$, leaving structural multiplier
\begin{equation}
    q_{\PCR,j}=\frac{s_j}{s_j+a}.
    \label{eq:intro-pcr-multiplier}
\end{equation}
When $s_j\asymp a$, the bias is constant order.  Better rank selection cannot
remove it.

We study the direct correction suggested by this calculation.  For empirical
eigenpairs $(\widehat\mu_i,\widehat v_i)$, dPCR retains the first $r$ components
but divides by $\widehat\mu_i-a_0$ rather than $\widehat\mu_i$.  The correction
level $a_0$ changes the retained inverse weights, while $r$ controls omission
and variance.  A positive
denominator margin keeps the truncated inverse away from its pole.  Thus dPCR is
not a full inverse with a shifted denominator: the hard cutoff is part of the
estimator and part of its stability condition.

PCR prediction has been studied through classical component-selection
criteria, finite-sample comparisons with ridge, minimax spectral
regularization, average-case risk calculations, empirical-versus-oracle
bounds, latent-factor models, and proportional random-matrix limits
\citep{jolliffe1982note,dhillon2013risk,dicker2017kernel,xu2019variables,
hucker2023prediction,bing2021latent,wu2020weighted,brownlees2024pcr,
gedon2024pcr,green2025pcr}.  Complementary work develops implicit and sketched
approximations for large-scale PCR
\citep{frostig2016projection,allenzhu2017fasterpcr,moryosef2019sketching}.
The head--tail proof architecture used below also connects to benign-overfitting
analyses of minimum-norm least squares, alternative interpolants, ridge, and
stochastic-gradient methods
\citep{muthukumar2020harmless,bartlett2020benign,hastie2022surprises,
chatterji2022foolish,shamir2023implicit,tsigler2023benign,zou2023sgd}.
Section~\ref{sec:related} organizes the closest statistical results by the spectral
regime they address.

The paper makes four contributions.
\begin{enumerate}[leftmargin=2em,itemsep=0.35em]
    \item We identify a sharp-floor regime of Gaussian random-design prediction
    in which rank selection alone is provably insufficient.  When the aggregate
    covariance tail creates a sharp floor comparable to the head scale,
    ordinary PCR has a prediction-risk lower bound uniform over all ranks,
    whereas the conditional risk of dPCR is asymptotically negligible relative
    to that of the best ordinary PCR rank.  Neither scale need be constant
    order.

    \item We explain this separation through an exact conditional prediction-risk
    decomposition.  The tail contribution that inflates empirical denominators
    is governed by first spectral mass, whereas the clean prediction cost of
    correction is governed by squared spectral mass.  Thus the floor can be
    nonnegligible while remaining cheap to remove.

    \item A same-sample trimmed mean of the empirical sub-cluster estimates the
    floor within its operator width and attains the oracle dPCR upper-bound rate
    at a prespecified rank.  The separation persists under approximate
    predictive alignment when the tail prediction-energy fraction vanishes.

    \item A separate pointwise fixed-aspect analysis gives the risk-optimal
    scalar correction in a one-spike model and a deterministic risk functional
    for fixed spectral cutoffs.  The optimal positive correction strictly
    improves ordinary rank-$1$ PCR, but mean-floor subtraction is generally not
    optimal for a broad Marchenko--Pastur bulk; it becomes exact in the
    collapsing-bulk, matched-scale limit.
\end{enumerate}

\paragraph*{Organization}
A detailed theorem roadmap begins in Section~\ref{sec:sharp}, where the formal results
start.  Section~\ref{sec:related} positions the contribution.  Section~\ref{sec:setup} defines PCR,
dPCR, and risk.  Section~\ref{sec:sharp} establishes the sharp random-floor theory and
its flat-tail consequences.
Section~\ref{sec:fixed} develops the proportional theory.  Section~\ref{sec:simulations}
reports simulations, and Section~\ref{sec:discussion} records limitations
and extensions.

\section{Related work and novelty boundaries}
\label{sec:related}

\subsection{Spectral regimes in existing theory}

\paragraph*{Regime A: the floor is absent or negligible}
Operator-regularization analyses of spectral cutoff control empirical
regularization through approximation to a population operator
\citep{dicker2017kernel}.  Likewise, nonasymptotic empirical-PCR bounds compare
sample and population spectral quantities under concentration or alignment
conditions \citep{huang2022dimensionality,hucker2023prediction}.  When the
retained scale dominates this approximation error, any additive tail shift is
lower order and the cutoff is the relevant regularization parameter.  Average
random-design calculations with known population directions similarly study
how risk changes with the number of retained variables, rather than a persistent
shift in their inverse weights \citep{xu2019variables}.
The latent-factor analysis of \citet{bing2021latent} likewise treats adaptive
rank selection in a strong-head regime where the covariate-noise scale is lower
order than the retained factor eigenvalues.

\paragraph*{Regime B: a nonnegligible floor acts as self-induced regularization}
When the effective-rank
condition of \citet{hucker2023prediction} fails, empirical eigenvalues acquire
an upward bias interpreted as self-induced regularization.  In related
head--tail analyses, a large tail trace
stabilizes variance in weak empirical modes, while excessive implicit
regularization can make the risk-optimal explicit ridge parameter negative
\citep{bartlett2020benign,tsigler2023benign}.  A
floor can therefore stabilize weak modes and simultaneously over-attenuate
predictive terms whose clean scale is comparable to it.

\paragraph*{Regime C: truncation denoises low-rank or corrupted covariates}
The error-in-variables and incomplete-observation literature gives guarantees
under noisy, missing, or semirandom covariates
\citep{agarwal2021robustness,bhaskara2021semirandom}; establishes minimum-norm identification
and out-of-sample prediction for fixed designs, with applications to synthetic
controls \citep{agarwal2025identification}; and develops online regularized PCR
for adaptive panel data \citep{agarwal2023adaptive}.  In these regimes,
truncation protects prediction from exogenous measurement error or temporal
adaptation.  Here the covariates are clean, and
many individually weak coordinates instead create an endogenous, nearly scalar
amount in sample space that remains inside every retained denominator.

\paragraph*{Regime D: the tail distortion is broad and non-scalar}
Proportional random-matrix theory does recognize persistent spectral distortion.
\citet{gedon2024pcr} study generalization and distribution shift in a spiked
covariance model, while \citet{green2025pcr} derive exact asymptotic prediction
risk for general population spectra using eigenvector-overlap measures.  At
fixed aspect ratio, however, the Marchenko--Pastur bulk has order-one relative
width.  A scalar mean-floor subtraction is then not exact; the
prediction-optimal inverse weighting depends on spectral location and overlap.

\subsection{The missing regime}

Let $h$ be the retained head rank and write the population eigenvalues as
$\lambda_1\ge\cdots\ge\lambda_p>0$.  The predictive head may be
heterogeneous; we assume only that $\lambda_1/\lambda_h$ remains bounded and
use its lower edge $\lambda_h$ as the reference head scale.  Anticipating the
notation of the main theorem, define
\begin{equation}
T_1=\sum_{j>h}\lambda_j,
\qquad
T_2=\sum_{j>h}\lambda_j^2,
\qquad
a=\frac{T_1}{n},
\qquad
\delta=\norm{K_T^{(1)}-aI_n}_{\op},
\label{eq:missing-regime-quantities}
\end{equation}
where $K_T^{(1)}=X_TX_T^\top/n$ is the tail sample-space Gram matrix.
For an independent Gaussian or sub-Gaussian tail, the leading width scale is
$\sqrt{T_2/n}$, with an additional largest-tail-eigenvalue term.  The regime of
interest is therefore summarized by
\begin{equation}
a\asymp\lambda_h,
\qquad
\frac{\delta}{a}\longrightarrow0,
\qquad
\frac{T_2}{n\lambda_h^2}\longrightarrow0,
\label{eq:missing-regime}
\end{equation}
For the Gaussian sufficient conditions below, the largest-coordinate part of
floor sharpness is
$n\lambda_{h+1}/T_1=\lambda_{h+1}/a\to0$.  Hence, when
$a\asymp\lambda_h$, the same condition entails
$\lambda_{h+1}/\lambda_h\to0$.  This is the head--tail boundary separation
implied by the sharp-floor regime; it is not an eigengap among the retained
head eigenvalues.  If $E_H$ is the prediction energy carried by the matched head and
$V_H$ is its ordinary estimation-variance scale, detectability requires
schematically
\begin{equation}
E_H\gg V_H+\frac{T_2}{n\lambda_h^2}E_H.
\label{eq:missing-regime-detectability}
\end{equation}
Thus the mechanism requires \emph{large trace, small squared mass, and matched
signal scale}: the trace creates an order-$\lambda_h$ denominator shift, while
the squared mass makes that shift sharp and its removal affordable.

This statement is scale-free.  With the effective tail dimension and matched
head-to-floor ratio
\begin{equation}
d_1=\frac{T_1^2}{T_2},
\qquad
\kappa=\frac{\lambda_h}{a},
\label{eq:effective-tail-dimension}
\end{equation}
the normalized leakage is exactly
\begin{equation}
\frac{T_2}{n\lambda_h^2}=\frac{n}{d_1\kappa^2}.
\label{eq:effective-tail-leakage}
\end{equation}
Consequently, the absolute sequences $a$ and $\lambda_h$ may both vanish,
both diverge, or vary irregularly.  What matters is their ratio, the effective
tail dimension, and the prediction energy relative to estimation noise.

For a flat tail with $m$ coordinates having the common population eigenvalue
$\lambda_{h+1}$,
\begin{equation}
\frac{m\lambda_{h+1}}{n}\asymp\lambda_h,
\qquad
\frac{m\lambda_{h+1}^2}{n}=o(\lambda_h^2).
\label{eq:flat-tail-missing-regime}
\end{equation}
The first relation makes the floor nonnegligible, and together the two imply
$n/m\to0$.  For a flat tail, ultra-high dimension is therefore structural
rather than cosmetic.  More generally, the driver is
$d_1/n\to\infty$, together with control of the largest tail
coordinate: many heterogeneous weak directions can create the same sharp
floor even when the literal tail dimension is not the informative quantity.
Table~\ref{tab:missing-regime} summarizes the resulting regime map.

\begin{table}[!tbp]
\centering
\small
\caption{The missing quadrant.  Floor size is measured relative to a predictive
head scale $\lambda_h$, and sharpness by $\delta/a$.  A third axis---the squared
tail mass $T_2/(n\lambda_h^2)$---determines whether correction is affordable.}
\label{tab:missing-regime}
\vspace{0.35em}
\begin{tabular}{>{\raggedright\arraybackslash}p{0.18\linewidth}
>{\raggedright\arraybackslash}p{0.34\linewidth}
>{\raggedright\arraybackslash}p{0.38\linewidth}}
\toprule
Tail floor & Sharp tail: $\delta/a\to0$ & Broad tail: $\delta/a\asymp1$ \\
\midrule
Negligible: $a/\lambda_h\to0$
& Operator and perturbative PCR theory: rank selection controls the leading
risk \citep{dicker2017kernel,huang2022dimensionality}.
& Distortion can be spectrally visible but remains lower order for matched-head
prediction; correction is unnecessary. \\
Matched: $a\asymp\lambda_h$
& \textbf{This paper's main regime:} a nonnegligible scalar floor creates
constant attenuation, while $T_2/(n\lambda_h^2)\to0$ makes removal affordable.
& Fixed-aspect random-matrix regime: the distortion is persistent but
non-scalar, so mean subtraction is only approximate
\citep{gedon2024pcr,green2025pcr}. \\
\bottomrule
\end{tabular}
\end{table}

\paragraph*{Adjacent corrections and the novelty boundary}
The closest direct antecedent is \citet{dicker2013oneshot}.  They study a
one-factor, fixed-sample high-dimensional model with an explicitly isotropic
residual and correct rank-one PCR using the gap between the leading and
trailing sample eigenvalues.  In that setting, the trailing eigenvalue
estimates the same flat sample-space floor targeted here, making their method
a rank-one flat-tail prototype of de-flooring.  The present theory develops
the growing-sample prediction problem with several predictive components and
heterogeneous clean covariance tails.  It proves common-floor formation,
separates that floor from clean prediction leakage, estimates the floor from
the same sample, permits approximate predictive alignment, and compares with
ordinary PCR uniformly over every rank.

Classical spectral cutoff or truncated SVD changes only the support of the
inverse filter: retained terms keep weight $1/\widehat\mu_i$, and discarded
terms receive weight zero \citep{engl1996regularization}.  Truncated total least
squares instead assumes errors in both the coefficient matrix and response and
truncates small singular values of their augmented matrix $[X,y]$
\citep{hansen1997ttls}; its correction is response-dependent and generally
changes both subspace and weights.  By contrast, dPCR keeps an $X$-only
eigenspace, is conditionally linear in $y$, and estimates its common shift from
the covariate tail.  Corrected Gram matrices for corrupted covariates
\citep{loh2011noisy} and optimal covariance eigenvalue shrinkage
\citep{donoho2018shrinkage} address still different observation or loss models.
Our contribution is a general prediction-risk theory for an endogenous floor
created by a sharp, high-effective-rank clean tail: an exact risk decomposition,
a PCR lower bound uniform over all ranks, a vanishing risk-ratio theorem, a
same-sample floor estimator, and a fixed-aspect boundary where scalar
subtraction ceases to be exact.  The spiked-covariance calculations use
classical outlier and eigenvector-overlap theory
\citep{baik2006spiked,paul2007spiked,ledoit2011eigenvectors}.

\section{Setup, estimators, and the two prediction-risk controls}
\label{sec:setup}

Let $X_0\in\R^{n\times p}$ and $\beta_0^\star\in\R^p$ denote the design
and coefficient in the original feature coordinates, and let
$\Sigma_0=\E(x_0x_0^\top)$.  Write the population eigendecomposition as
\begin{equation}
\Sigma_0
=V\diag(\lambda_1,\ldots,\lambda_p)V^\top,
\qquad
\lambda_1\ge\cdots\ge\lambda_p\ge0,
\label{eq:population-diagonalization}
\end{equation}
and rotate to population spectral coordinates:
\begin{equation}
X=X_0V,
\qquad
\beta^\star=V^\top\beta_0^\star,
\qquad
\Sigma=V^\top\Sigma_0V
=\diag(\lambda_1,\ldots,\lambda_p).
\label{eq:spectral-coordinates}
\end{equation}
Because $X_0\beta_0^\star=X\beta^\star$, the model becomes
\begin{equation}
    y=X\beta^\star+\varepsilon,
    \qquad \varepsilon\sim N(0,\sigma_\varepsilon^2I_n),
    \qquad X\in\R^{n\times p},\quad p\ge n.
    \label{eq:model}
\end{equation}
Along a triangular array, $\beta_n^\star$ is deterministic; the same arguments
apply if it is random but independent of the design.
PCR and dPCR predictions, as well as population prediction risk, are unchanged
by this orthogonal reparametrization.  We therefore use the diagonal
population coordinates in \eqref{eq:spectral-coordinates} throughout.  Write
\begin{equation}
\widehat\Sigma=\frac{X^\top X}{n}
=\sum_{i=1}^{q}\widehat\mu_i\widehat v_i\widehat v_i^\top,
\qquad
K=\frac{XX^\top}{n}
=\sum_{i=1}^{q}\widehat\mu_i\widehat u_i\widehat u_i^\top,
\label{eq:eigendecomp}
\end{equation}
where $q=\operatorname{rank}(X)$ and
$\widehat\mu_1\ge\cdots\ge\widehat\mu_q>0$.  The population prediction risk
and its response-noise average conditional on $X$ are
\begin{align}
\Risk_{\rm pop}(\widehat\beta)
&=(\widehat\beta-\beta^\star)^\top
\Sigma(\widehat\beta-\beta^\star),
\label{eq:pop-risk}\\
\Rbar_X(\widehat\beta)
&=\E_\varepsilon\{\Risk_{\rm pop}(\widehat\beta)\mid X\}.
\label{eq:conditional-risk}
\end{align}

\begin{definition}[Ordinary PCR]
For $0\le r\le q$,
\begin{equation}
\widehat\beta_{\PCR}(r)
=\sum_{i=1}^{r}
\frac{\widehat v_i^\top X^\top y/n}{\widehat\mu_i}\widehat v_i.
\label{eq:pcr}
\end{equation}
Its empirical spectral multiplier relative to the ridgeless fit is
$f_{\PCR,r}(\widehat\mu_i)=\1\{i\le r\}$.
\end{definition}

\begin{definition}[De-floored principal component regression]
For a correction level $a_0\ge0$ and rank $0\le r\le q$, define
\begin{equation}
\widehat\beta_{\dPCR}(a_0,r)
=\sum_{i=1}^{r}
\frac{\widehat v_i^\top X^\top y/n}{\widehat\mu_i-a_0}\widehat v_i.
\label{eq:dpcr}
\end{equation}
Given a margin $g_n>0$, the admissible pairs are
\begin{equation}
\mathcal A_{\dPCR}(X;g_n)
=\{(a_0,r):0\le a_0\le\widehat\mu_r-g_n\},
\label{eq:admissible}
\end{equation}
with the convention that $r=0$ is always admissible.  The empirical multiplier
is
\begin{equation}
f_{a_0,r}(\widehat\mu_i)
=\frac{\widehat\mu_i}{\widehat\mu_i-a_0}\1\{i\le r\}.
\label{eq:dpcr-filter}
\end{equation}
The margin only screens candidates whose corrected denominator is too close to
zero.  Once $(a_0,r)$ is admissible, $g_n$ does not enter the fitted estimator.
For validation over a finite candidate grid, a convenient relative screen is
$\widehat\mu_r-a_0\ge\eta\widehat\mu_r$ with a fixed small $\eta>0$; the
simulations use $\eta=0.1$.
\end{definition}

\begin{center}
\fcolorbox{black}{gray!5}{%
\begin{minipage}{0.93\linewidth}
\small
\textbf{Implementation of dPCR.}
Compute $(\widehat\mu_i,\widehat v_i)_{i=1}^n$ from $X^\top X/n$.  Given a
predictive rank $h<n$, estimate the floor by
\[
\widehat a_h=\frac{1}{n-h}\sum_{i=h+1}^n\widehat\mu_i.
\]
If $\widehat\mu_h-\widehat a_h\ge g_n>0$, fit
$\widehat\beta_{\dPCR}(\widehat a_h,h)$ using \eqref{eq:dpcr}.  When $h$ is
unknown, select among admissible $(a_0,r)$ candidates using an independent
validation sample.
\end{minipage}}
\end{center}

The two parameters solve different prediction-risk problems.  The rank $r$
decides which empirical terms enter the estimator, whereas $a_0$ changes their
inverse weights.
Ordinary PCR is the boundary case $a_0=0$.  Taking $r=q$ produces a full
shifted inverse and can be unstable; dPCR is useful because the cutoff can
enforce a corrected spectral gap.

\paragraph*{How the assumptions are organized}
The observation model \eqref{eq:model} and the risk definition
\eqref{eq:pop-risk} are used throughout.  All later assumption blocks are
local rather than cumulative.  The sharp-floor random-design theorem uses the
standard Gaussian head--tail model in
Assumptions~\ref{ass:random-tail} and \ref{ass:gaussian-head}; its realized-design verification is
given in Appendix~\ref{app:proofs}.  Assumption~\ref{ass:sharp-separation} collects the
triangular-array rate conditions under which the finite-sample comparison
yields separation.  The proportional theory has its own fixed-aspect block.

\subsection{Scalar-center mechanism: rank truncation does not correct denominator inflation}

Let $u\in\R^n$ have unit norm and consider a rank-one realized head
$K_H=s_1uu^\top$.  When the tail Gram satisfies
$K_T^{(1)}=aI_n+\Delta$ with $\norm{\Delta}_{\op}\ll s_1+a$, its scalar-center
approximation is
\begin{equation}
K\simeq s_1uu^\top+aI_n,
\qquad
y=\sqrt{ns_1}\,\beta^\star u+\varepsilon.
\label{eq:one-head}
\end{equation}
At this center, rank-$1$ ordinary PCR has structural head coefficient
\begin{equation}
\widehat\beta_{H,\PCR}
=\frac{s_1}{s_1+a}\beta^\star
+\frac{\sqrt{s_1}}{\sqrt n(s_1+a)}u^\top\varepsilon,
\label{eq:one-head-pcr}
\end{equation}
whereas dPCR with $a_0=a$ has
\begin{equation}
\widehat\beta_{H,\dPCR}
=\beta^\star+\frac{1}{\sqrt{ns_1}}u^\top\varepsilon.
\label{eq:one-head-dpcr}
\end{equation}
These displays expose the leading denominator effect only.  The random-design
theory below controls the perturbation $\Delta$, the empirical subspace, and
the clean prediction cost of the accompanying tail loadings.

\section{Sharp random floors}
\label{sec:sharp}

\paragraph*{Theory roadmap}
The theory proceeds by design regime rather than by proof technique.
Table~\ref{tab:theorem-roadmap} links each local assumption block to the results
that use it and to the prediction-risk conclusion to carry forward.  These
blocks are alternatives, not cumulative requirements: in particular, the
fixed-aspect theory does not inherit the sharp-tail conditions.

Three questions determine the regime.  Is the tail floor comparable to the
predictive head scale?  Is the tail Gram sufficiently close to a scalar
matrix?  Is the squared tail mass small enough that removing the floor is
cheap in population prediction risk?  Rank selection addresses omitted
signal; scalar de-flooring addresses attenuation only when the latter two
conditions make a common denominator correction meaningful.

\paragraph*{Spectral notation}
We use one notation system throughout the sharp-floor theory:
$\lambda_j$ always denotes the $j$th population covariance eigenvalue,
$s_j$ an eigenvalue of the realized head Gram $X_H^\top X_H/n$, and
$\widehat\mu_i$ an empirical eigenvalue of the full sample covariance
$X^\top X/n$.  The diagonal population blocks are
$\Lambda_H=\diag(\lambda_1,\ldots,\lambda_h)$ and
$\Lambda_T=\diag(\lambda_{h+1},\ldots,\lambda_p)$, while
$D_H=\diag(s_1,\ldots,s_h)$ records the realized head spectrum.  In the flat-tail
specialization, the common tail eigenvalue remains denoted $\lambda_{h+1}$;
no second tail sequence or renamed head endpoints are introduced.

\begin{table}[!tbp]
\centering
\footnotesize
\setlength{\tabcolsep}{3pt}
\caption{Roadmap to the main theoretical results.  The last column states the
prediction-risk message rather than the proof technique.}
\label{tab:theorem-roadmap}
\vspace{0.35em}
\begin{tabular}{>{\raggedright\arraybackslash}p{0.12\linewidth}
>{\raggedright\arraybackslash}p{0.13\linewidth}
>{\raggedright\arraybackslash}p{0.15\linewidth}
>{\raggedright\arraybackslash}p{0.46\linewidth}}
\toprule
Design regime & Active assumption blocks & Main results & Prediction-risk message \\
\midrule
Gaussian sharp floor
& Assms.~\ref{ass:random-tail}--\ref{ass:gaussian-head};
Assm.~\ref{ass:sharp-separation} for the asymptotic corollaries
& Thm.~\ref{cor:gaussian-head}; Cors.~\ref{cor:sharp-separation},
\ref{cor:plugin-floor}, and \ref{cor:approx-alignment}
& Under effective-dimension and largest-coordinate control, dPCR separates
from every PCR rank.  A trimmed-mean plug-in floor attains the oracle rate at a
prespecified rank, and the separation is robust to vanishing tail prediction
energy; see Figures~\ref{fig:gaussian-sharp} and \ref{fig:heterogeneous-ranks}.
Figure~\ref{fig:adaptive-validation} separately tests joint validation tuning. \\
Flat Gaussian tail
& Assms.~\ref{ass:random-tail}--\ref{ass:gaussian-head} with
$\lambda_{h+1}=\cdots=\lambda_p$
& Cor.~\ref{cor:flat-tail}
& When $m_n/n\to\infty$, the Wishart tail collapses in operator norm around
$a_nI_n$.  The scale-free and power-law rates follow directly from the sharp
random-floor theorem; see Figure~\ref{fig:gaussian-sharp}. \\
Fixed aspect
& Assm.~\ref{ass:fixed-aspect}
& Thms.~\ref{thm:fixed-rank1} and \ref{thm:fixed-functional}
& Scalar correction can improve PCR, but a broad Marchenko--Pastur bulk is not
its mean: the optimal correction also depends on spectral location and
eigenvector overlap; see Figures~\ref{fig:matched} and \ref{fig:fixed-aspect-simulations}. \\
\bottomrule
\end{tabular}
\end{table}

The sharp random-design proof uses a first-order cluster bound because its
squared risk cost already matches the baseline tail leakage; a more delicate
head-sandwiched expansion would not improve the total rate.  The fixed-aspect
section instead retains the full Marchenko--Pastur geometry and shows why a
broad bulk cannot be replaced by its mean.

\paragraph*{Head and signal alignment}
In the population spectral coordinates fixed in Section~\ref{sec:setup}, the head
consists of the first $h$ population eigendirections and the tail consists of
the rest.  Signal alignment means that most prediction energy
$\sum_{j=1}^p\lambda_j(\beta_j^\star)^2$ lies in the head.  For the cleanest
separation statement, we use the exact-alignment benchmark
$\beta_T^\star=0$.  Corollary~\ref{cor:approx-alignment} replaces this benchmark by a
quantitative prediction-energy condition: if
$E_T=\|\beta_T^\star\|_{\Lambda_T}^2=o(E_H)$, the risk-ratio bound gains only
$E_T/E_H$.  The condition is stated entirely in prediction norm.

The proof strategy is adapted directly from the head--tail architecture of
benign-overfitting theory
\citep{bartlett2020benign,tsigler2023benign}.  As in that literature, we first
diagonalize the population covariance, separate a low-dimensional predictive
head from a high-effective-rank tail, prove deterministic risk algebra on a
named design event, and only then use concentration to verify the event.  The
interpretation is reversed at one crucial point: benign-overfitting arguments
use the tail Gram matrix as helpful self-regularization, whereas here the same
approximately scalar contribution can over-regularize the predictive head.
The two tail matrices in \eqref{eq:two-tail-grams} then separate the trace that
creates this implicit regularization from the squared spectral mass that makes
its removal costly in clean prediction.

The cutoff makes the resulting negative-ridge calculation simpler than full
negative ridge.
Let $\widehat\Pi_r=\sum_{i\le r}\widehat v_i\widehat v_i^\top$ be the
feature-space PCR projector.  Whenever the corrected denominators are
positive, dPCR is equivalently
\begin{equation}
\widehat\beta_{\dPCR}(a,r)
=\argmin_{\beta\in\operatorname{range}(\widehat\Pi_r)}
\left\{
\frac{\norm{y-X\beta}^2}{2n}-\frac a2\norm\beta^2
\right\}.
\label{eq:projected-negative-ridge}
\end{equation}
Thus it is negative ridge restricted to an empirical PCR range.  The cutoff
removes the lower empirical directions that would otherwise approach the
negative-ridge pole.  The remaining proof combines the benign-overfitting
separation of deterministic algebra from concentration with an ordinary
cluster perturbation, an exact ridge-style conditional-risk decomposition, and
an all-rank lower bound specific to PCR.

\subsection{Heterogeneous Gaussian tail}

We first treat a nonflat tail; the flat model will be a corollary.  The
setup is the standard Gaussian random-design decomposition used in
head--tail analyses of benign overfitting.  In the population spectral
coordinates fixed in Section~\ref{sec:setup}, write
$X=Z\Sigma^{1/2}=[X_H\;X_T]$, with the first $h$ population
eigendirections in $X_H$ and the remaining directions in $X_T$.  The main
random-design result is Theorem~\ref{cor:gaussian-head}.  The realized-design
comparison and its Gaussian verification are deferred to
Appendix~\ref{app:proofs}, so the main text can state the prediction theorem
before introducing proof-specific operators.

\begin{assumption}[Gaussian tail block]
\label{ass:random-tail}
Let $1\le h<n$, let $m=p-h\ge n$, and
\begin{equation}
X_T=Z_T\Lambda_T^{1/2},
\qquad
\Lambda_T=\diag(\lambda_{h+1},\ldots,\lambda_p),
\label{eq:heterogeneous-design}
\end{equation}
where the columns $z_j$ of $Z_T$ are independent $N(0,I_n)$ vectors and
$\lambda_{h+1}\ge\cdots\ge\lambda_p>0$.  Thus $\Sigma_T=\Lambda_T$ in the
coordinates of Section~\ref{sec:setup}.
Conditional on the head, $Z_T$ has the stated law.  Write
$\beta^\star=(\beta_H^\star,\beta_T^\star)$ and assume throughout that the
coefficient sequence is deterministic, or independent of the design.
\end{assumption}

\begin{assumption}[Gaussian predictive head]
\label{ass:gaussian-head}
Let
\begin{equation}
\Lambda_H=\diag(\lambda_1,\ldots,\lambda_h),
\qquad \lambda_1\ge\cdots\ge\lambda_h>\lambda_{h+1},
\label{eq:population-head-spectrum}
\end{equation}
and let $X_H=Z_H\Lambda_H^{1/2}$, where $Z_H$
has independent $N(0,1)$ entries and is independent of $Z_T$.  Take
$\Sigma_H=\Lambda_H$, so $\Sigma=\diag(\Lambda_H,\Lambda_T)$.  The individual
head eigenvalues may be unequal; assume only
$\lambda_1/\lambda_h\le C_H$ for a fixed $C_H<\infty$.
Write $s_1\ge\cdots\ge s_h>0$ for the eigenvalues of the realized head Gram
$X_H^\top X_H/n$, and set $D_H=\diag(s_1,\ldots,s_h)$.
\end{assumption}

The independence of $Z_H$ and $Z_T$ is not an additional structural
restriction within the Gaussian model.  After rotation into the population
eigenbasis, the covariance is block diagonal, so the Gaussian head and tail
coordinates are uncorrelated and hence independent.  For non-Gaussian
designs, the same population diagonalization need not produce independent
blocks.

Suppress the triangular-array index and define
\begin{align}
T_1&=\sum_{j>h}\lambda_j,
&T_2&=\sum_{j>h}\lambda_j^2,
&T_4&=\sum_{j>h}\lambda_j^4,
\label{eq:tail-moments}\\
a&=\frac{T_1}{n},
&b_T&=\frac{T_2}{n},
&d_1&=\frac{T_1^2}{T_2},
&d_2&=\frac{T_2^2}{T_4}.
\label{eq:tail-effective-dimensions}
\end{align}
The two sample-space matrices needed for estimation and clean prediction are
\begin{equation}
K_T^{(1)}=\frac{X_TX_T^\top}{n}
=\frac1n\sum_{j>h}\lambda_jz_jz_j^\top,
\qquad
K_T^{(2)}=\frac{X_T\Lambda_TX_T^\top}{n}
=\frac1n\sum_{j>h}\lambda_j^2z_jz_j^\top.
\label{eq:two-tail-grams}
\end{equation}
Their expectations are $aI_n$ and $b_TI_n$, respectively.  The first matrix
creates the empirical denominator floor; the second is the tail readout in
population prediction risk.  This distinction is why both $d_1$ and $d_2$
appear.

For $t\ge0$, define
\begin{align}
q_1(t)
&=\sqrt{\frac{n+t}{d_1}}
+\frac{(n+t)\lambda_{h+1}}{T_1},
\label{eq:q1}\\
q_2(t)
&=\sqrt{\frac{n+t}{d_2}}
+\frac{(n+t)\lambda_{h+1}^2}{T_2}.
\label{eq:q2}
\end{align}

For the Gaussian model, define the head and tail prediction energies, the head
noise scale, and the floor-to-head ratio
\begin{equation}
\begin{aligned}
E_H&=\beta_H^{\star\top}\Lambda_H\beta_H^\star,
&E_T&=\beta_T^{\star\top}\Lambda_T\beta_T^\star,
&\kappa&=\frac{\lambda_h}{a},\\
V_H&=\frac{\sigma_\varepsilon^2h}{n}.
\end{aligned}
\label{eq:gaussian-head-scales}
\end{equation}
The head-concentration radius is
\begin{equation}
r_H(t)=\sqrt{\frac{h+t}{n}}+\frac{h+t}{n}.
\label{eq:head-concentration-radius}
\end{equation}
Small $r_H(t)$ ensures that the realized head Gram is comparable to its
population counterpart.

To distinguish the statistical model from the realized conditions used by the
risk comparison, let $S_H=X_H^\top X_H/n$.  The proofs use the derived design
event
\begin{equation}
\mathcal E(w,b)=
\left\{
\frac12\Lambda_H\preceq S_H\preceq\frac32\Lambda_H,
\quad
\norm{K_T^{(1)}-aI_n}_{\op}\le w,
\quad
\norm{K_T^{(2)}}_{\op}\le b
\right\}.
\label{eq:sharp-design-event}
\end{equation}
The deterministic comparison uses only this event together with a corrected
denominator margin.  Under the Gaussian model, head and weighted-tail
concentration verify it with probability at least $1-6e^{-t}$ for
$w=Ca q_1(t)$ and $b=b_T\{1+Cq_2(t)\}$ whenever $r_H(t)$ is sufficiently
small.  The event therefore provides the realized-design interface between
the deterministic risk comparison and its Gaussian concentration proof,
following the standard head--tail architecture of benign-overfitting theory.

\begin{assumption}[Asymptotic sharp matched separation]
\label{ass:sharp-separation}
Consider a triangular array satisfying
Assumptions~\ref{ass:random-tail} and \ref{ass:gaussian-head}, let $t_n=\log n$, and use the scales
in \eqref{eq:gaussian-head-scales}.  Assume:
\begin{enumerate}[label=(\roman*),leftmargin=2.2em]
\item $h_n=o(n)$ and $g_n\le c_g\lambda_{h_n,n}$ for a sufficiently small
fixed $c_g>0$;
\item $0<c_\kappa\le\kappa_n\le C_\kappa<\infty$;
\item $q_{1,n}(t_n)\to0$ and $q_{2,n}(t_n)=O(1)$;
\item $E_{H,n}>0$ eventually and
$\mathsf{SNR}_{H,n}:=E_{H,n}/V_{H,n}\to\infty$, with the convention
$\mathsf{SNR}_{H,n}=\infty$ when $V_{H,n}=0$.
\end{enumerate}
Corollary~\ref{cor:sharp-separation} first treats exact alignment,
$\beta_{T,n}^\star=0$, and Corollary~\ref{cor:approx-alignment} extends the result to
$E_{T,n}/E_{H,n}\to0$.
\end{assumption}

\paragraph*{Interpretation of Assumption \ref{ass:sharp-separation}}
Assumptions \ref{ass:random-tail} and \ref{ass:gaussian-head} specify the
finite-sample Gaussian design, while Assumption \ref{ass:sharp-separation}
selects the triangular-array regime in which the comparison becomes a
vanishing risk ratio.  Its four conditions have distinct roles.

Condition (i) keeps the predictive head low-dimensional relative to the
sample size, so $r_H(\log n)\to0$ and the realized head Gram concentrates
around the heterogeneous population head.  The head eigenvalues may differ;
the shell condition $\lambda_1/\lambda_h\le C_H$ makes the lower edge
$\lambda_h$ the common reference scale.  The bound
$g_n\le c_g\lambda_h$ is compatible with the fixed-order corrected
denominator margin supplied by the concentration event.

Condition (ii) matches the tail floor $a=T_1/n$ to that lower head edge.  This
is the regime in which the ordinary-PCR multiplier
$s_j/(s_j+a)$ remains attenuated by a nonvanishing amount across the head,
while subtracting $a$ targets the leading denominator distortion.  If
$\kappa=\lambda_h/a$ diverges, the floor is negligible at the head scale; if
$\kappa$ vanishes, relative floor-estimation errors are amplified at the
weakest retained head scale.  The matched regime isolates the setting in
which attenuation is persistent and scalar subtraction remains stable.

Condition (iii) separates floor formation from the clean prediction cost of
removing it.  The requirement $q_1(\log n)\to0$ yields
$K_T^{(1)}=aI_n+o_{\Pp}(a)$ in operator norm, so the same scalar correction
applies simultaneously to all retained head components, even for a
heterogeneous tail.  It also implies $n/d_1\to0$.  The bound
$q_2(\log n)=O(1)$ controls $K_T^{(2)}$, and hence
\[
\frac{n}{d_1}\{1+q_2(\log n)\}\longrightarrow0,
\]
which makes the tail loadings induced by de-flooring asymptotically cheap in
population prediction risk.  Finally, condition (iv) makes the shared head
estimation scale $V_H$ negligible relative to the head prediction energy
$E_H$.  Exact and approximate predictive alignment are handled separately by
$E_T=0$ and $E_T/E_H\to0$, respectively.

\paragraph*{Boundary and corrected-denominator margins}
The inverse calculation uses a corrected empirical cluster margin at the
head--tail boundary.  On the head-concentration event,
$s_h\asymp\lambda_h$, while
$\delta:=\norm{K_T^{(1)}-aI_n}_{\op}\le Ca q_1(t)$.  Hence the finite-sample
smallness condition gives
\begin{equation}
\widehat\mu_h-a\ge s_h-\delta\ge c\lambda_h,
\qquad
\widehat\mu_h-\widehat\mu_{h+1}
\ge s_h-2\delta\ge c\lambda_h,
\label{eq:corrected-cluster-margin}
\end{equation}
where the first inequality stabilizes the corrected denominators and the
second separates the retained cluster from the residual spectrum.  These are
the margins used by the inverse perturbation argument and by the
Weyl--Davis--Kahan cluster bound.

The Gaussian sufficient condition also has a population boundary-gap
consequence.  Directly from \eqref{eq:q1},
\begin{equation}
q_1(t)
\ge \left(1+\frac tn\right)\frac{\lambda_{h+1}}a
=\left(1+\frac tn\right)\kappa
\frac{\lambda_{h+1}}{\lambda_h}.
\label{eq:q1-implies-boundary-gap}
\end{equation}
Therefore, in the matched regime, $q_{1,n}(\log n)\to0$ implies
$\lambda_{h_n+1,n}/\lambda_{h_n,n}\to0$.  Since
$1+(\log n)/n\to1$, the resulting population boundary ratio is $o(1)$.

\begin{theorem}[Gaussian random-design sharp-floor risk and PCR barrier]
\label{cor:gaussian-head}
Under Assumptions~\ref{ass:random-tail} and \ref{ass:gaussian-head}, assume
$\beta_T^\star=0$.  If $r_H(t)\le c$,
$q_1(t)\le c\min(\kappa,1)$, and
$g_n\le c_g\lambda_h$ for sufficiently small constants $c,c_g$ depending
only on $C_H$, then with probability at least $1-6e^{-t}$,
\begin{align}
\Rbar_X\{\widehat\beta_{\dPCR}(a,h)\}
&\le C_{C_H}\left[
V_H+\frac{E_H+V_H}{\kappa^2}
\left\{q_1(t)^2+\frac{n}{d_1}\{1+q_2(t)\}\right\}
\right],
\label{eq:gaussian-head-dpcr-bound}\\
\inf_{0\le r\le n}\Rbar_X\{\widehat\beta_{\PCR}(r)\}
&\ge c_{C_H}\frac{E_H}{(1+C_H\kappa)^2}.
\label{eq:gaussian-head-pcr-bound}
\end{align}
Thus the theorem permits the heterogeneous population head
$\lambda_1\ge\cdots\ge\lambda_h$; its lower edge $\lambda_h$ is the reference
scale for the matched-floor and denominator-margin conditions.
\end{theorem}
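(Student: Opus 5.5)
The proof has two parts: a deterministic risk comparison on the design event $\mathcal E(w,b)$ of \eqref{eq:sharp-design-event}, with $w=Caq_1(t)$ and $b=b_T\{1+Cq_2(t)\}$, followed by a Gaussian verification that this event holds with probability at least $1-6e^{-t}$. For the verification I would use the standard matrix-Gaussian bound for $S_H$: since $r_H(t)\le c$, it gives $\tfrac12\Lambda_H\preceq S_H\preceq\tfrac32\Lambda_H$. For $K_T^{(1)}$ and $K_T^{(2)}$ I would use Bernstein/Hanson--Wright concentration for weighted sums $\frac1n\sum_j\lambda_j^kz_jz_j^\top$ combined with an $\varepsilon$-net on $S^{n-1}$. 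The Bernstein variance term gives $\sqrt{(n+t)T_{2k}}/n$ and the subexponential term gives $(n+t)\lambda_{h+1}^k/n$. Divided by $a$ (resp.\ $b_T$), these are exactly the two pieces of $q_1$ and $q_2$. The rest of the argument then works on $\mathcal E$.

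\emph{PCR lower bound.} Write $y=X_H\beta_H^\star+\varepsilon$. For any rank $r$, the noise-averaged risk is at least the squared bias $\|(I-\widehat\Pi_r\widehat\Sigma^{+}X^\top X_H/n)\beta_H^\star\|_\Sigma^2$ restricted to head coordinates. If $r<h$, some head direction is missed. Bound the bias below through the $\Lambda_H$-weighted projection, using the fact that a rank-$r$ map cannot recover an $h$-dimensional signal. The cleaner route is to bound the head block: the risk is at least $\|(I_h-A_r)\beta_H^\star\|_{\Lambda_H}^2$, where $A_r=P_H\widehat\beta(r)$-map. I would show that $A_r\preceq$ (in a suitable sense) $S_H^{1/2}(S_H+ K\text{-floor})^{-1}S_H^{1/2}$. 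Indeed, on $\mathcal E$ every retained denominator satisfies $\widehat\mu_i\ge$ the head signal plus $a-w$, because $K\succeq K_H+(a-w)I$. Hence the head multiplier satisfies $\|A_r\|\le \max_j s_j/(s_j+a-w)\le \tfrac32\lambda_1/(\tfrac32\lambda_1+a/2)$, and $1-\|A_r\|\ge c/(1+C_H\kappa)$. Since $E_H=\|\beta_H^\star\|^2_{\Lambda_H}$, the bound \eqref{eq:gaussian-head-pcr-bound} then follows uniformly in $r$. This covers ranks $r\ge h$ as well: extra tail components only add risk in the $\Lambda_T$ block or leave the head multiplier still attenuated, so I would argue via the ridge-type inequality $\widehat\Pi_r\widehat\Sigma^{+}\preceq$ the full-inverse analog. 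I expect making ``multiplier $\le s/(s+a-w)$'' rigorous for arbitrary $r$ to be the main obstacle. My first attempt would write the head block of $\widehat\beta_{\PCR}(r)$ as $X_H^\top$ applied to a projected pseudo-inverse of $K$ applied to $X_H\beta_H^\star/n$, then bound $P K^{+}P\preceq (K_H+(a-w)I)^{-1}$ on the relevant range.

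\emph{dPCR upper bound.} On $\mathcal E$, \eqref{eq:corrected-cluster-margin} gives $\widehat\mu_h-a\ge c\lambda_h\ge g_n$, so $(a,h)$ is admissible, and $\widehat\mu_h-\widehat\mu_{h+1}\ge c\lambda_h$. In sample space, $K=K_H+aI+\Delta$ with $\|\Delta\|\le w$. Davis--Kahan puts the top-$h$ empirical $u$-space within $w/\lambda_h$ of $\operatorname{range}(X_H)$, and the corrected cluster $\widehat U_h(\widehat M_h-aI)^{-1}\widehat U_h^\top$ lies within $O(w/\lambda_h^2)$ of $K_H^{+}$. Writing $\widehat\beta-\beta^\star=$ bias $+$ noise, I would decompose the risk into head and tail blocks. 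The head bias is $O(E_H w^2/\lambda_h^2)=O(E_Hq_1^2/\kappa^2)$. The tail loading $X_T^\top(\cdot)X_H\beta_H^\star/n$ has $\Lambda_T$-norm squared controlled by $\|K_T^{(2)}\|\le b$ times $\|K_H^{+}\|\cdot E_H$, giving $E_H b/\lambda_h=E_H\,(T_2/n)(1+q_2)/\lambda_h$. This equals $E_H(n/d_1)(1+q_2)/\kappa^2$ up to constants, because $T_2/(n\lambda_h^2)=n/(d_1\kappa^2)$. The noise term is a trace of the same operators: $\sigma^2\tr(\cdot)/n$ gives $V_H$ from the head and $V_H$ times the same small factors from the tail and perturbation. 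Summing these three contributions yields \eqref{eq:gaussian-head-dpcr-bound}.
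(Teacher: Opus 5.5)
Your proposal is correct and follows essentially the same route as the paper: Gaussian verification of the design event (head singular-value concentration plus weighted chi-square bounds on a net), the all-rank PCR barrier via $0\preceq G_r\preceq K^{-1}\preceq\{K_H+(a-\delta)I_n\}^{-1}$ followed by congruence with $X_H/\sqrt n$, and the dPCR bound via the Weyl/Davis--Kahan cluster-inverse perturbation $\|\widehat R-K_H^+\|_{\op}\lesssim\delta/\lambda_h^2$ substituted into the four-term head/tail, bias/noise risk decomposition. One small slip: the noiseless tail-loading term is bounded by $\|\widehat R\|_{\op}^2\|K_T^{(2)}\|_{\op}\|y_0\|^2/n\lesssim E_Hb/\lambda_h^2$, not $E_Hb/\lambda_h$; the squared factor is what you in fact use when you identify the term with $E_H(n/d_1)(1+q_2)/\kappa^2$.
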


This theorem separates the two effects of the tail.  The relative width
$q_1$ controls whether one scalar can correct all retained denominators,
whereas $n\{1+q_2\}/d_1$ controls the population prediction cost of the tail
loadings carried by those empirical components.

\begin{corollary}[Scale-free random-design separation]
\label{cor:sharp-separation}
Consider a triangular array satisfying
Assumption~\ref{ass:sharp-separation} and suppose $\beta_{T,n}^\star=0$.  Then
\begin{align}
\frac{\Rbar_X\{\widehat\beta_{\dPCR}(a_n,h_n)\}}
{\inf_r\Rbar_X\{\widehat\beta_{\PCR}(r)\}}
={}&O_{\Pp}\!\left[
\mathsf{SNR}_{H,n}^{-1}
+\{1+\mathsf{SNR}_{H,n}^{-1}\}
\right.\nonumber\\[-0.25em]
&\left.\qquad\times\left\{
q_{1,n}(t_n)^2+\frac{n}{d_{1,n}}\{1+q_{2,n}(t_n)\}
\right\}
\right]
\longrightarrow0.
\label{eq:sharp-rates}
\end{align}
The conclusion permits the head and floor scales to vanish, diverge, or vary
irregularly.  What matters is their ratio, the two effective dimensions, the
largest tail coordinates, and head prediction SNR.
\end{corollary}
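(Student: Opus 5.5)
The plan is to apply Theorem~\ref{cor:gaussian-head} along the triangular array with $t=t_n=\log n$ and divide the two displayed bounds. First I check the theorem's finite-sample hypotheses eventually. Condition~(i) of Assumption~\ref{ass:sharp-separation} gives $h_n=o(n)$. Hence $r_H(\log n)=\sqrt{(h_n+\log n)/n}+(h_n+\log n)/n\to0$, and in particular $r_H(t_n)\le c$ for large $n$. Condition~(ii) gives $\min(\kappa_n,1)\ge\min(c_\kappa,1)>0$. With condition~(iii), $q_{1,n}(t_n)\to0$, so $q_{1,n}(t_n)\le c\min(\kappa_n,1)$ eventually. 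The margin requirement $g_n\le c_g\lambda_{h_n}$ is assumed directly in (i), where $c_g$ is taken no larger than the constant required in Theorem~\ref{cor:gaussian-head}. Since $C_H$ is fixed, all constants $C_{C_H},c_{C_H}$ are uniform in $n$. Finally, the event has probability at least $1-6e^{-\log n}=1-6/n\to1$, which is what the $O_{\Pp}$ statement needs.

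On that event I form the ratio. For the denominator, \eqref{eq:gaussian-head-pcr-bound} together with $\kappa_n\le C_\kappa$ gives $\inf_r\Rbar_X\{\widehat\beta_{\PCR}(r)\}\ge c_{C_H}E_{H,n}/(1+C_HC_\kappa)^2$. This is a fixed positive multiple of $E_{H,n}$, and it is positive eventually by condition~(iv). For the numerator, \eqref{eq:gaussian-head-dpcr-bound} with $\kappa_n^{-2}\le c_\kappa^{-2}$ bounds it by a constant times
\[
V_{H,n}+(E_{H,n}+V_{H,n})\Bigl\{q_{1,n}(t_n)^2+\frac{n}{d_{1,n}}\{1+q_{2,n}(t_n)\}\Bigr\}.
\]
Dividing by $E_{H,n}$ turns $V_{H,n}/E_{H,n}$ into $\mathsf{SNR}_{H,n}^{-1}$ and $(E_{H,n}+V_{H,n})/E_{H,n}$ into $1+\mathsf{SNR}_{H,n}^{-1}$. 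This is exactly the bracket in \eqref{eq:sharp-rates}, with a deterministic constant depending only on $C_H,c_\kappa,C_\kappa$, which gives the $O_{\Pp}$ bound. When $V_{H,n}=0$, the convention $\mathsf{SNR}^{-1}=0$ is consistent with this.

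The remaining work, and the only nonroutine step, is showing that the bracket tends to zero. Condition~(iv) gives $\mathsf{SNR}_{H,n}^{-1}\to0$, so $1+\mathsf{SNR}_{H,n}^{-1}$ stays bounded, and $q_{1,n}(t_n)^2\to0$. For $n/d_{1,n}$, the definition \eqref{eq:q1} gives $q_1(t)\ge\sqrt{(n+t)/d_1}\ge\sqrt{n/d_1}$, so $n/d_{1,n}\le q_{1,n}(t_n)^2\to0$. Combined with $q_{2,n}(t_n)=O(1)$, this gives $\frac{n}{d_{1,n}}\{1+q_{2,n}(t_n)\}\to0$. Hence the whole bracket vanishes.

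The main subtlety is that every constant hidden in the theorem must be uniform along the array. This holds because those constants depend only on $C_H$, while $\kappa_n$ is pinned between $c_\kappa$ and $C_\kappa$. Nothing in the argument depends on the absolute sizes of $a_n$ or $\lambda_{h_n}$, which gives the scale-free remark. The statement is at the level of orders, so no separate treatment of the complement event is needed beyond its probability tending to zero.
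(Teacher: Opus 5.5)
Your proposal is correct and follows the paper's proof essentially step for step. You verify the hypotheses of Theorem~\ref{cor:gaussian-head} eventually at $t_n=\log n$, divide the dPCR upper bound by the all-rank PCR lower bound on the event of probability at least $1-6/n$ with the shell and matched-ratio constants absorbed, and then use the first summand of $q_1$ to get $n/d_{1,n}\le q_{1,n}(t_n)^2\to0$; the paper writes the slightly sharper $\frac{n}{n+t_n}q_{1,n}(t_n)^2$, which makes no difference.
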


Two data-relevant extensions preserve this headline conclusion.  First, the
same-sample sub-cluster mean $\widehat a_h$ estimates $a$ within the same width
$w$ and attains the oracle-floor rate; see Corollary~\ref{cor:plugin-floor}.  Second,
when $\rho_T=E_T/E_H\to0$, both the oracle and plug-in risk ratios gain only an
$O_{\Pp}(\rho_T)$ term; see Corollary~\ref{cor:approx-alignment}.

\paragraph*{Proof architecture}
The realized-head benchmark, projected-inverse identity, weighted-tail
concentration lemma, and deterministic all-rank PCR barrier are collected in
Appendix~\ref{app:proofs}.  Together they yield the floor-estimation,
approximate-alignment, and flat-tail results below.
\begin{remark}[Estimated floor]
\label{rem:random-estimated-floor}
Let $\widehat R(\widetilde a)$ use the same top-$h$ empirical projector but
subtract $\widetilde a$.  On the head-concentration and sharp-floor event of
Theorem~\ref{cor:gaussian-head}, if $|\widetilde a-a|\le c\lambda_h$, then
\begin{equation}
\norm{\widehat R(\widetilde a)-\widehat R(a)}_{\op}
\le C\frac{|\widetilde a-a|}{\lambda_h^2}.
\label{eq:random-floor-perturbation}
\end{equation}
Accordingly, \eqref{eq:gaussian-head-dpcr-bound} gains at most
$C_{C_H}(E_H+V_H)\{(\widetilde a-a)/\lambda_h\}^2$.
The trimmed-mean estimate in Corollary~\ref{cor:plugin-floor} satisfies this
perturbation condition on the same concentration event.
\end{remark}

\begin{corollary}[Same-sample plug-in floor]
\label{cor:plugin-floor}
Assume $h<n$ and define the trimmed-mean floor estimate
\begin{equation}
\widehat a_h=\frac{1}{n-h}\sum_{i=h+1}^{n}\widehat\mu_i.
\label{eq:plugin-floor}
\end{equation}
Under the hypotheses of Theorem~\ref{cor:gaussian-head}, on the same event of
probability at least $1-6e^{-t}$: every sub-cluster sample eigenvalue
satisfies $|\widehat\mu_i-a|\le\delta$ for $i>h$, where
$\delta=\norm{K_T^{(1)}-aI_n}_{\op}$; consequently
$|\widehat a_h-a|\le\delta\le Caq_1(t)$; the pair $(\widehat a_h,h)$ is
admissible; and
\begin{equation}
\Rbar_X\{\widehat\beta_{\dPCR}(\widehat a_h,h)\}
\le C_{C_H}\left[
V_H+\frac{E_H+V_H}{\kappa^2}
\left\{
q_1(t)^2+\frac{n}{d_1}\{1+q_2(t)\}
\right\}
\right].
\label{eq:plugin-floor-bound}
\end{equation}
Moreover, $(\widehat a_{h_n},h_n)$ may replace $(a_n,h_n)$ in
Corollary~\ref{cor:sharp-separation}.  The estimate may be replaced by any statistic
valued in the convex hull of the sub-cluster eigenvalues, such as their
median.  Given the retained rank, correct de-flooring therefore requires no
oracle knowledge of the tail.
\end{corollary}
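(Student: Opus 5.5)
The plan has four steps: locate the sub-cluster eigenvalues by interlacing, average them, check admissibility, and then feed the floor error into Remark~\ref{rem:random-estimated-floor}. Throughout we work on the event $\mathcal E(w,b)$ of Theorem~\ref{cor:gaussian-head}, with $w=Caq_1(t)$, which has probability at least $1-6e^{-t}$. No new probability estimates are needed; every claim should follow deterministically on this event.

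\textbf{Step 1 (sub-cluster location).} Work in sample space, where the nonzero eigenvalues of $X^\top X/n$ coincide with those of $K=K_H+K_T^{(1)}$, with $K_H=X_HX_H^\top/n$ of rank at most $h$. Write $K_T^{(1)}=aI_n+\Delta$ with $\norm{\Delta}_{\op}=\delta\le w$. Then $K-aI_n=K_H+\Delta$.
\begin{itemize}[leftmargin=2em]
\item \emph{Lower bound.} Since $K_H\succeq0$, Weyl gives $\lambda_i(K)\ge\lambda_i(aI_n+\Delta)\ge a-\delta$ for every $i\le n$.
\item \emph{Upper bound.} Since $K_H$ has rank at most $h$, Weyl's inequality for rank-$h$ perturbations (interlacing) gives $\lambda_{i}(K)\le\lambda_{i-h}(aI_n+\Delta)\le a+\delta$ for $i>h$.
\end{itemize}
Because $p\ge n$ and the tail is Gaussian with positive eigenvalues, $K$ has full rank $n$ almost surely. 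Hence $q=n$, and $\widehat\mu_{h+1},\dots,\widehat\mu_n$ all lie in $[a-\delta,a+\delta]$.

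\textbf{Step 2 (floor error).} Averaging the bounds of Step~1 gives $|\widehat a_h-a|\le\delta\le Caq_1(t)$. The same bound holds for any statistic taking values in the convex hull of $\{\widehat\mu_i\}_{i>h}$, such as the median.

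\textbf{Step 3 (admissibility).} By \eqref{eq:corrected-cluster-margin}, $\widehat\mu_h-a\ge c\lambda_h$. Hence
\[
\widehat\mu_h-\widehat a_h\ge c\lambda_h-\delta.
\]
Now $\delta\le Caq_1(t)\le Cc\,a\min(\kappa,1)\le Cc\lambda_h$, using $a\kappa=\lambda_h$. With the smallness constant $c$ chosen small, this yields $\widehat\mu_h-\widehat a_h\ge c'\lambda_h\ge g_n$, because $g_n\le c_g\lambda_h$ with $c_g$ small. Finally, $\widehat a_h\ge a-\delta>0$. Together these show that $(\widehat a_h,h)\in\mathcal A_{\dPCR}(X;g_n)$.

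\textbf{Step 4 (risk bound).} The same display gives $|\widehat a_h-a|\le c\lambda_h$, which is the hypothesis of Remark~\ref{rem:random-estimated-floor}. That remark then adds at most $C_{C_H}(E_H+V_H)\{(\widehat a_h-a)/\lambda_h\}^2$ to \eqref{eq:gaussian-head-dpcr-bound}. Using $\lambda_h=\kappa a$,
\[
\frac{(\widehat a_h-a)^2}{\lambda_h^2}\le\frac{C^2a^2q_1(t)^2}{\kappa^2a^2}=\frac{C^2q_1(t)^2}{\kappa^2}.
\]
This is already one of the terms in the bracket of \eqref{eq:gaussian-head-dpcr-bound}, so \eqref{eq:plugin-floor-bound} follows after enlarging the constant. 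The replacement of $(a_n,h_n)$ by $(\widehat a_{h_n},h_n)$ in Corollary~\ref{cor:sharp-separation} is then immediate. The numerator obeys the same rate bound, and the PCR lower bound in the denominator does not involve the floor, so the ratio argument is unchanged.

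\textbf{Main obstacle.} The step needing most care is Step~4, specifically how Remark~\ref{rem:random-estimated-floor} converts the operator perturbation \eqref{eq:random-floor-perturbation} into a risk increment. I would bound the extra bias by $\norm{\widehat R(\widetilde a)-\widehat R(a)}_{\op}$ times the head-projected signal $X\beta^\star/n$, whose squared size is of order $\lambda_h E_H$. I would bound the extra variance analogously, with scale of order $\lambda_h V_H$. In the population norm, the head contributes a factor of order $\lambda_h$, and this is where the first-order cluster bound is used. If the remark is taken as given, as the statement permits, this step reduces to the algebra displayed above.
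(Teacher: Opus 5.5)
Your proposal is correct and follows the paper's proof essentially step for step. The shared steps are: containment of $\widehat\mu_{h+1},\dots,\widehat\mu_n$ in $[a-\delta,a+\delta]$ via Weyl and the rank-$h$ interlacing bound (the paper obtains the upper bound through Courant--Fischer on $\ker K_H$, which is the same fact); averaging to get $|\widehat a_h-a|\le\delta\le Caq_1(t)$; admissibility from $\widehat\mu_h-\widehat a_h\ge s_h-2\delta$; and absorbing the Remark~\ref{rem:random-estimated-floor} increment $C_{C_H}(E_H+V_H)\delta^2/\lambda_h^2\le C_{C_H}(E_H+V_H)q_1(t)^2/\kappa^2$ into the oracle bound, while the floor-free PCR lower bound is unchanged.

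The one point the paper states explicitly and you leave implicit is that $\widehat a_h$ depends on $X$ alone. Hence $\widehat R(\widehat a_h)$ is design-measurable, and the conditional-risk identity applies pathwise on the event with no independence needed between the floor estimate and the retained eigenvectors.
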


\begin{corollary}[Approximate predictive alignment]
\label{cor:approx-alignment}
Consider a triangular array satisfying Assumption~\ref{ass:sharp-separation}, and allow
$\beta_n^\star=(\beta_{H,n}^\star,\beta_{T,n}^\star)$.  Define
\begin{equation}
E_{T,n}:=\beta_{T,n}^{\star\top}\Lambda_{T,n}\beta_{T,n}^\star,
\qquad
\rho_{T,n}:=\frac{E_{T,n}}{E_{H,n}},
\qquad
\eta_n:=q_{1,n}(t_n)^2+\frac{n}{d_{1,n}}
\{1+q_{2,n}(t_n)\}.
\label{eq:tail-prediction-leakage}
\end{equation}
If $\rho_{T,n}\to0$, then, for either the oracle floor
$\widetilde a_n=a_n$ or the same-sample plug-in floor
$\widetilde a_n=\widehat a_{h_n}$,
\begin{equation}
\frac{\Rbar_X\{\widehat\beta_{\dPCR}(\widetilde a_n,h_n)\}}
{\inf_r\Rbar_X\{\widehat\beta_{\PCR}(r)\}}
=O_{\Pp}\!\left[
\mathsf{SNR}_{H,n}^{-1}
+\{1+\mathsf{SNR}_{H,n}^{-1}\}\eta_n
+\rho_{T,n}
\right]
\longrightarrow0.
\label{eq:approx-alignment-ratio}
\end{equation}
Thus approximate alignment is measured only by the fraction of population
prediction energy in the tail.
\end{corollary}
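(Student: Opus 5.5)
We need to prove Corollary (approximate alignment). The plan is to reduce to the exact-alignment bounds by splitting $\beta^\star=(\beta_H^\star,0)+(0,\beta_T^\star)$. Both estimators are conditionally linear in $y$, so this split carries through the risk.

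\emph{Upper bound.} Write $\widetilde\beta:=\widehat\beta_{\dPCR}(\widetilde a_n,h_n)=\widehat R(\widetilde a_n)X^\top y/n$, where $\widehat R(\widetilde a)$ is the top-$h$ projected shifted inverse of Remark~\ref{rem:random-estimated-floor}. Then $\widetilde\beta=\widetilde\beta^{(H)}+\widehat R(\widetilde a_n)X_T^{\top}X_T\beta_T^\star/n$, padded with zeros in the head coordinates, where $\widetilde\beta^{(H)}$ is the dPCR fit for the aligned coefficient $(\beta_H^\star,0)$. Using $\|u+v\|_\Sigma^2\le 2\|u\|_\Sigma^2+2\|v\|_\Sigma^2$ gives
\[
\Rbar_X(\widetilde\beta)\le 2\Rbar_X^{(H)}(\widetilde\beta^{(H)})+2\norm{\Sigma^{1/2}\{\widehat R(\widetilde a_n)X^\top X_T\beta_T^\star/n-(0,\beta_T^\star)\}}^2 .
\]
The first term is bounded by Corollaries~\ref{cor:sharp-separation} and \ref{cor:plugin-floor}, together with Remark~\ref{rem:random-estimated-floor} for the plug-in floor; it equals $O_{\Pp}(V_H+(E_H+V_H)\eta_n)$. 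The second term is at most $2E_T$ plus twice the $\Sigma$-norm of the leakage vector $\widehat R X^\top X_T\beta_T^\star/n$. For this vector I will show a bound of order $E_T$. Write $X_T\beta_T^\star=Z_T\Lambda_T^{1/2}\beta_T^\star$. Its squared norm divided by $n$ concentrates at $E_T$, and it is independent of $Z_H$; on a further event of probability $1-e^{-t}$, $\|X_T\beta_T^\star\|^2/n\le CE_T$ (a $\chi^2$-type bound for Gaussian quadratic forms in $Z_T$). The map $v\mapsto\Sigma^{1/2}\widehat R X^\top v/\sqrt n$ has operator norm $O(1)$ on the design event, by the same computation that bounds the noise variance term in Theorem~\ref{cor:gaussian-head}. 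The relevant inequality is $\|\Sigma^{1/2}\widehat R X^\top\|_{\op}^2/n\lesssim\|\widehat R\|_{\op}\|\widehat\Pi_h\Sigma\widehat\Pi_h\|$ with $\widehat\mu_i/(\widehat\mu_i-a)^2\lesssim1/\lambda_h$, which uses the corrected margin \eqref{eq:corrected-cluster-margin}. Hence the leakage term is $O_{\Pp}(E_T)$ and $\Rbar_X(\widetilde\beta)=O_{\Pp}(V_H+(E_H+V_H)\eta_n+E_T)$.

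\emph{Lower bound.} The denominator must stay of order $E_H$. Applying the all-rank barrier of Theorem~\ref{cor:gaussian-head} directly needs $\beta_T^\star=0$, so I will perturb: by the reverse triangle inequality in the $\Sigma$-norm,
\[
\Rbar_X\{\widehat\beta_{\PCR}(r)\}^{1/2}\ge\Rbar^{(H)}_X\{\widehat\beta_{\PCR}(r)\}^{1/2}-\norm{\Sigma^{1/2}(\widehat\Pi_r\widehat\Sigma^{+}X^\top X_T\beta_T^\star/n-(0,\beta_T^\star))} .
\]
Here the noise term is common to both, and I work with bias parts plus the shared variance. The PCR leakage operator $\Sigma^{1/2}\widehat\Pi_r\widehat\Sigma^+X^\top/\sqrt n$ is not uniformly bounded over all $r$, because small $\widehat\mu_r$ amplifies it; this is the main obstacle. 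To handle it, I expect to use the head/tail split: on tail directions $\widehat\mu_i\approx a\asymp\lambda_h$ by Corollary~\ref{cor:plugin-floor}, so all nonzero $\widehat\mu_i\ge a(1-q_1)\gtrsim\lambda_h$. The operator norm squared is then $\max_i\|\Sigma^{1/2}\widehat v_i\|^2/\widehat\mu_i$, and $\|\Sigma^{1/2}\widehat v\|^2\le\lambda_1\lesssim\lambda_h$, which gives an $O(1)$ bound uniformly in $r$. The leakage is therefore $O_{\Pp}(E_T^{1/2})$, and the infimum is at least $(c\sqrt{E_H}-C\sqrt{E_T})^2\ge cE_H/2$ once $\rho_T\to0$.

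Dividing, the ratio is $O_{\Pp}[\mathsf{SNR}_H^{-1}+(1+\mathsf{SNR}_H^{-1})\eta_n+\rho_T]$. Assumption~\ref{ass:sharp-separation} and $\rho_T\to0$ give convergence to zero.
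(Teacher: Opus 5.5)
Your proposal follows the paper's proof essentially step for step: you split off the tail response $X_T\beta_T^\star\sim N(0,E_TI_n)$ and bound the dPCR leakage by $O_{\Pp}(E_T)$ using the corrected-denominator margin. You then keep the all-rank PCR barrier by using $K\succeq(a-\delta)I_n$, which bounds every PCR inverse by $2/a$ uniformly in $r$; the paper tracks only the head block via $\|u+w\|^2\ge\frac12\|u\|^2-\|w\|^2$, whereas your reverse triangle inequality on the full $\Sigma$-norm is equally valid. One small correction: $\|\Sigma^{1/2}\widehat\Pi_r\widehat\Sigma^{+}X^\top\|_{\op}^2/n=\|\Sigma^{1/2}(\sum_{i\le r}\widehat\mu_i^{-1}\widehat v_i\widehat v_i^\top)\Sigma^{1/2}\|_{\op}$ is not equal to $\max_i\|\Sigma^{1/2}\widehat v_i\|^2/\widehat\mu_i$, but it is at most $\lambda_1/\widehat\mu_r\le 2\lambda_1/a=O(1)$, which is the bound you actually need.
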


\begin{remark}[Data-driven rank in the matched regime]
\label{rem:rank-selection}
The remaining oracle input is the rank.  Suppose additionally $h\le n/4$ and
that a known constant $\underline\kappa>0$ lower-bounds the realized clean
head-to-floor ratio $s_h/a$ on the event under consideration.  Let
$\widehat a_0$ be the median of all $n$ sample eigenvalues and
$\widehat h=\#\{i:\widehat\mu_i>(1+\underline\kappa/2)\widehat a_0\}$.  If
$q_1(t)\le c$ for a constant $c$ depending only on $\underline\kappa$, then
$\widehat h=h$ on the concentration event, and Corollary~\ref{cor:plugin-floor}
applies with $(\widehat a_{\widehat h},\widehat h)$.  If
    $\kappa=\lambda_h/a\ge c_\kappa$ and the head
concentration event is small enough to give
$s_h\ge\lambda_h/2$, one may take
$\underline\kappa=c_\kappa/2$.  Thus the threshold accounts explicitly for
the lower edge of the population head shell.
In the matched sharp regime the full pair is therefore computable from the
same sample, given a lower bound on the matched head-to-floor ratio.
\end{remark}

\begin{remark}[Validation outside the sharp regime]
\label{rem:validation}
When floor sharpness or a matched-ratio lower bound is in doubt---for
instance under the broad bulks of Section~\ref{sec:fixed}---the pair $(a_0,r)$ can
instead be selected on an independent validation sample over the admissible
grid \eqref{eq:admissible}.  The fixed-aspect functionals of Section~\ref{sec:fixed}
describe the limits that such selection targets.  We use the plug-in floor in
the sharp regime because it provably attains the oracle rate there.  A
validation oracle inequality for jointly selecting $(a_0,r)$ outside that
regime remains open.
\end{remark}

\subsection{Flat Gaussian tail}

\begin{corollary}[Flat Gaussian tail]
\label{cor:flat-tail}
Consider a triangular array satisfying
Assumptions~\ref{ass:random-tail} and \ref{ass:gaussian-head}, with
$\lambda_{h_n+1,n}=\cdots=\lambda_{p_n,n}$ for all $m_n$
tail coordinates.  Use the quantities in
\eqref{eq:gaussian-head-scales}, assume $E_{H,n}>0$, and set
$\gamma_n=m_n/n$ and
$\mathsf{SNR}_{H,n}=E_{H,n}/V_{H,n}$ when $V_{H,n}>0$, with the convention
$\mathsf{SNR}_{H,n}=\infty$ when $V_{H,n}=0$.  Suppose
$\beta_{T,n}^\star=0$ and
\begin{equation}
\gamma_n\to\infty,
\qquad
0<c_\kappa\le\kappa_n\le C_\kappa<\infty,
\qquad
h_n=o(n),
\qquad
g_n\le c_g\lambda_{h_n,n}.
\label{eq:flat-gaussian-conditions}
\end{equation}
Then
\begin{equation}
T_{1,n}=m_n\lambda_{h_n+1,n},
\qquad
T_{2,n}=m_n\lambda_{h_n+1,n}^2,
\qquad
d_{1,n}=d_{2,n}=m_n,
\qquad
a_n=\frac{m_n\lambda_{h_n+1,n}}{n}.
\label{eq:flat-tail-specialization}
\end{equation}
Moreover,
\begin{equation}
\frac{\norm{K_T^{(1)}-a_nI_n}_{\op}}{a_n}
=\norm{\frac{Z_TZ_T^\top}{m_n}-I_n}_{\op}
=O_{\Pp}\!\left(\sqrt{\frac{n}{m_n}}+\frac{n}{m_n}\right).
\label{eq:flat-wishart-collapse}
\end{equation}
Thus $a_nI_n$ is the operator-norm center of the tail Gram, not an exact
finite-sample identity.  Equivalently, the Wishart--Marchenko--Pastur bulk
collapses around its mean when $m_n/n\to\infty$.  The two concentration scales are
$q_{1,n}(t)=q_{2,n}(t)=
O\{\sqrt{(n+t)/m_n}+(n+t)/m_n\}$.  Consequently,
\begin{align}
\Rbar_X\{\widehat\beta_{\dPCR}(a_n,h_n)\}
&=O_{\Pp}\!\left[
V_{H,n}+\frac{E_{H,n}+V_{H,n}}
{\gamma_n\kappa_n^2}
\right],
\label{eq:flat-random-risk}\\
\frac{\Rbar_X\{\widehat\beta_{\dPCR}(a_n,h_n)\}}
{\inf_r\Rbar_X\{\widehat\beta_{\PCR}(r)\}}
&=O_{\Pp}\!\left[
\mathsf{SNR}_{H,n}^{-1}
+\frac{1+\mathsf{SNR}_{H,n}^{-1}}
{\gamma_n\kappa_n^2}
\right].
\label{eq:flat-random-ratio}
\end{align}
By Corollary~\ref{cor:plugin-floor} both displays
hold with the trimmed-mean plug-in floor $\widehat a_{h_n}$ in place of
$a_n$.  If $\mathsf{SNR}_{H,n}\to\infty$, then the ratio in
\eqref{eq:flat-random-ratio} tends to zero.
\end{corollary}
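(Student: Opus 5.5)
The plan is to specialize Theorem~\ref{cor:gaussian-head}, Corollary~\ref{cor:sharp-separation} and Corollary~\ref{cor:plugin-floor} to the flat tail. Most of the work is algebraic substitution. The one genuinely new ingredient is the Wishart operator-norm collapse \eqref{eq:flat-wishart-collapse}.

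\textbf{Step 1: the identities \eqref{eq:flat-tail-specialization}.} With $\lambda_j=\lambda_{h+1}$ for all $m$ tail coordinates, we get $T_1=m\lambda_{h+1}$ and $T_2=m\lambda_{h+1}^2$, and similarly $T_4=m\lambda_{h+1}^4$. Hence $d_1=T_1^2/T_2=m$ and $d_2=T_2^2/T_4=m$. Also $a=T_1/n$, and
\[
K_T^{(1)}=\frac{\lambda_{h+1}}{n}Z_TZ_T^\top=a\,\frac{Z_TZ_T^\top}{m}.
\]
Dividing by $a$ gives the first equality in \eqref{eq:flat-wishart-collapse}.

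\textbf{Step 2: the concentration scales.} Substituting into \eqref{eq:q1}--\eqref{eq:q2}, we have $(n+t)\lambda_{h+1}/T_1=(n+t)/m$ and $(n+t)\lambda_{h+1}^2/T_2=(n+t)/m$. Therefore
\[
q_1(t)=q_2(t)=\sqrt{\frac{n+t}{m}}+\frac{n+t}{m}.
\]
For the rate in \eqref{eq:flat-wishart-collapse}, I would use the standard Gaussian singular-value bound for the $n\times m$ matrix $Z_T^\top$ (Davidson--Szarek). With probability at least $1-2e^{-t}$, the singular values lie in $\sqrt m\pm(\sqrt n+\sqrt{2t})$. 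Hence
\[
\norm{\frac{Z_TZ_T^\top}{m}-I_n}_{\op}\le 3\sqrt{\frac{n+2t}{m}}+\frac{3(n+2t)}{m}.
\]
Taking $t$ fixed gives the $O_{\Pp}$ statement. Alternatively, this is exactly the weighted-tail concentration lemma with equal weights, so it can be quoted rather than reproved.

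\textbf{Step 3: checking Assumption~\ref{ass:sharp-separation}.} Take $t_n=\log n$. The main check is condition (iii), $q_{1,n}(\log n)\to0$. This needs $(n+\log n)/m_n\to0$, which follows from $\gamma_n\to\infty$ because $\log n=o(n)$. The same computation gives $q_2=q_1\to0$, so in particular $q_2=O(1)$. Conditions (i) and (ii) are assumed directly. The finite-sample hypotheses of Theorem~\ref{cor:gaussian-head} then also hold for large $n$:
\begin{itemize}
\item $r_H(\log n)\le c$ holds because $h_n=o(n)$;
\item $q_1\le c\min(\kappa,1)$ holds because $\kappa_n\ge c_\kappa$.
\end{itemize}

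\textbf{Step 4: plugging into the theorem.} Inserting into \eqref{eq:gaussian-head-dpcr-bound} with probability at least $1-6/n$, we have $q_1^2\lesssim n/m$ (since $q_1\to0$) and $(n/d_1)(1+q_2)\lesssim n/m=\gamma_n^{-1}$. This yields \eqref{eq:flat-random-risk}.

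\textbf{Step 5: the risk ratio.} Dividing by the all-rank lower bound \eqref{eq:gaussian-head-pcr-bound}, which is $\gtrsim E_H$ because $\kappa_n\le C_\kappa$, gives
\[
\frac{V_H}{E_H}+\frac{1+V_H/E_H}{\gamma_n\kappa_n^2}.
\]
This is \eqref{eq:flat-random-ratio}. The plug-in statement follows verbatim from Corollary~\ref{cor:plugin-floor}, whose bound has the same right-hand side. If $\mathsf{SNR}_{H,n}\to\infty$, then both terms vanish, since $\gamma_n\kappa_n^2\ge c_\kappa^2\gamma_n\to\infty$.

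\textbf{Main obstacle.} Nothing here is deep. The step needing most care is the bookkeeping in Step 4: one must check that the $O(1)$ and $\log n$ factors do not spoil the $1/\gamma_n$ rate, which comes from $(n+\log n)/m\le 2n/m$ eventually. One must also confirm that the event probability $1-6/n\to1$ legitimately converts the high-probability bounds into $O_{\Pp}$ statements. The largest-coordinate term $n/m$ in $q_1$ is dominated by $\sqrt{n/m}$, so it does not change the rate.
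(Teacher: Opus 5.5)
Your proposal is correct and follows essentially the same route as the paper's proof. Both arguments get $d_1=d_2=m$ and $q_1=q_2=\sqrt{(n+t)/m}+(n+t)/m$ by direct substitution, use the identity $K_T^{(1)}/a=Z_TZ_T^\top/m$ together with a standard Gaussian singular-value bound for the Wishart collapse, and then substitute at $t=\log n$ into the two bounds of Theorem~\ref{cor:gaussian-head} and into Corollary~\ref{cor:plugin-floor}. The one slip is harmless: $Z_T^\top$ is $m\times n$ rather than $n\times m$, and the Davidson--Szarek bound you wrote is still correct.
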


\paragraph*{Arbitrary covariance scales}
Choose any positive floor sequence $a_n$ and any $\gamma_n\to\infty$, set
$\lambda_{h_n+1,n}=a_n/\gamma_n$, and take
$\lambda_{h_n,n}=\kappa_na_n$ with $\kappa_n$ bounded above and below while
$\lambda_{1,n}/\lambda_{h_n,n}$ remains bounded.  If $h_n=o(n)$,
$E_{H,n}\gg\sigma_{\varepsilon,n}^2h_n/n$, and
$g_n\le c_g\lambda_{h_n,n}$, then Corollary~\ref{cor:flat-tail} applies and
\eqref{eq:flat-random-ratio} tends to
zero.  Hence $a_n$ and all head eigenvalues may vanish, diverge, or vary
irregularly.  For example, $\gamma_n=(\log n)^4$ gives a logarithmic
$O_{\Pp}\{(\log n)^{-4}\}$ tail contribution under matched scales.

\paragraph*{Power-law illustration}
For the illustrative choice
$\lambda_{h_n+1,n}=n^{-\theta}$, $m_n\asymp n^{1+\theta}$,
$a_n\asymp\lambda_{h_n,n}\asymp E_{H,n}\asymp1$ with a bounded heterogeneous
head shell, and
$\sigma_{\varepsilon,n}^2=O(1)$ with fixed $h_n$, the rate reduces to
\begin{equation}
\Rbar_X\{\widehat\beta_{\dPCR}(a_n,h_n)\}
=O_{\Pp}(n^{-\theta}+n^{-1}),
\qquad
\frac{\Rbar_X\{\widehat\beta_{\dPCR}(a_n,h_n)\}}
{\inf_r\Rbar_X\{\widehat\beta_{\PCR}(r)\}}
=O_{\Pp}(n^{-\theta}+n^{-1}).
\label{eq:power-law-illustration}
\end{equation}
The tail term dominates for $0<\theta<1$, while head variance dominates for
$\theta>1$.

\section{A fixed-aspect boundary: why the bulk mean is not enough}
\label{sec:fixed}

The sharp-floor approximation fails when the tail aspect stays fixed.  We use
the following local assumption block.  Only in this section, $\lambda$ denotes
the single population spike and $\tau$ the common flat-bulk eigenvalue; these
are scalar specializations of the population spectrum $\{\lambda_j\}$ used
above.

\begin{assumption}[Fixed-aspect one-spike model]
\label{ass:fixed-aspect}
There is one signal coordinate and $m$ flat tail coordinates, with
\begin{equation}
X=[\sqrt\lambda z\;\;\sqrt\tau Z_T],
\qquad \frac mn\to\gamma>1,
\qquad \beta^\star=(b,0,\ldots,0),
\qquad \Sigma=\diag(\lambda,\tau I_m).
\label{eq:fixed-model}
\end{equation}
Here $z$ and the entries of $Z_T$ are independent standard Gaussian variables,
$\lambda>\tau>0$, and $b\ne0$.
\end{assumption}
The tail Gram has Marchenko--Pastur law $F_{\gamma,\tau}$, normalized as the
probability law of its $n$ sample-space eigenvalues, with support
\begin{equation}
\ell_\gamma=\tau(\sqrt\gamma-1)^2,
\qquad
u_\gamma=\tau(\sqrt\gamma+1)^2,
\qquad
a=\gamma\tau.
\label{eq:mp-support}
\end{equation}
Its relative width does not vanish, so $K_T^{(1)}$ cannot be replaced by $aI_n$
\citep{marchenko1967distribution}.

\subsection{Separated one-spike formula}

Under Assumption~\ref{ass:fixed-aspect}, when
\begin{equation}
\lambda>\tau(1+\sqrt\gamma),
\label{eq:bbp}
\end{equation}
the leading sample eigenvalue and squared sample--population eigenvector overlap
converge to \citep{baik2006spiked,green2025pcr}
\begin{align}
\rho_\gamma(\lambda,\tau)
&=\lambda\left(1+\frac{\gamma\tau}{\lambda-\tau}\right),
\label{eq:outlier}\\
\chi_\gamma(\lambda,\tau)
&=\frac{1-\gamma\tau^2/(\lambda-\tau)^2}
{1+\gamma\tau/(\lambda-\tau)}.
\label{eq:overlap}
\end{align}

\begin{theorem}[Rank-$1$ proportional risk and optimal scalar correction]
\label{thm:fixed-rank1}
Under Assumption~\ref{ass:fixed-aspect} and \eqref{eq:bbp}, keep
$\lambda,\tau,b,$ and $\sigma_\varepsilon^2$ fixed as $n\to\infty$, and write
$\rho=\rho_\gamma(\lambda,\tau)$ and
$\chi=\chi_\gamma(\lambda,\tau)$.  For fixed $0\le a_0<\rho$, choose any fixed
$0<g<\rho-a_0$ and let
$c(a_0)=\rho/(\rho-a_0)$.  Then
\begin{align}
\Rbar_X\{\widehat\beta_{\PCR}(1)\}
&\xrightarrow{\Pp}
b^2\{\lambda(1-\chi)^2+\tau\chi(1-\chi)\},
\label{eq:fixed-pcr-rank1}\\
\Rbar_X\{\widehat\beta_{\dPCR}(a_0,1)\}
&\xrightarrow{\Pp}
b^2\{\lambda(c(a_0)\chi-1)^2
+\tau c(a_0)^2\chi(1-\chi)\}.
\label{eq:fixed-dpcr-rank1}
\end{align}
The $O_{\Pp}(n^{-1})$ response-noise term vanishes in these fixed-rank limits, and
$(a_0,1)$ is admissible with probability tending to one.  The
risk-minimizing multiplier and correction are
\begin{align}
c^\star
&=\frac{\lambda}{\lambda\chi+\tau(1-\chi)}
=\frac{\rho}{\lambda},
\label{eq:cstar}\\
a_0^\star
&=\rho(1-\chi)\left(1-\frac{\tau}{\lambda}\right)
=\rho-\lambda
=\frac{\gamma\tau\lambda}{\lambda-\tau},
\label{eq:astar}\\
\Rbar_X\{\widehat\beta_{\dPCR}(a_0^\star,1)\}
&\xrightarrow{\Pp}
b^2\frac{\lambda\tau(1-\chi)}
{\lambda\chi+\tau(1-\chi)}
=b^2\frac{\gamma\lambda\tau^2}{(\lambda-\tau)^2}.
\label{eq:fixed-opt-risk}
\end{align}
For every separated spike with $\lambda>\tau$, $c^\star>1$.  If $b\ne0$,
the optimally corrected rank-$1$ dPCR therefore strictly improves the limiting
signal risk of rank-$1$ PCR and the rank-$0$ estimator.  At fixed $\gamma$
the improvement is bounded by the prediction cost of tail content in the
leading empirical component.  The pointwise large-$\gamma$ limit of these
closed forms is given below.
\end{theorem}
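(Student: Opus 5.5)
The plan is to reduce the conditional risk of any rank-$1$ spectral estimator to three scalar random quantities: $\widehat\mu_1$, the squared overlap $\widehat v_{1,1}^2$ between $\widehat v_1$ and the spike coordinate $e_1$, and the tail mass of $\widehat v_1$ weighted by $\Sigma$. Any rank-$1$ estimator has the form $\widehat\beta=f\,\widehat v_1\widehat v_1^\top X^\top y/(n\widehat\mu_1)$, with $f=1$ for PCR and $f=c_n(a_0):=\widehat\mu_1/(\widehat\mu_1-a_0)$ for dPCR. Since $X^\top X\widehat v_1=n\widehat\mu_1\widehat v_1$, the signal part of $\widehat v_1^\top X^\top X\beta^\star/n$ equals $\widehat\mu_1\widehat v_1^\top\beta^\star=\widehat\mu_1 b\,\widehat v_{1,1}$. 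Hence the signal part of the estimator is $f b\,\widehat v_{1,1}\widehat v_1$. The noise part is $f\,\widehat v_1\,(\widehat v_1^\top X^\top\varepsilon)/(n\widehat\mu_1)$. Its conditional second moment is $\sigma_\varepsilon^2/(n\widehat\mu_1)$ times $f^2\,\widehat v_1^\top\Sigma\widehat v_1$, which is $O_{\Pp}(n^{-1})$ whenever $\widehat\mu_1$ stays bounded away from $0$ and $f$ stays bounded. The signal part of the risk is
\[
(fb\widehat v_{1,1}\widehat v_1-\beta^\star)^\top\Sigma(fb\widehat v_{1,1}\widehat v_1-\beta^\star)
=b^2\bigl\{\lambda(f\widehat v_{1,1}^2-1)^2+f^2\widehat v_{1,1}^2\,\tau(1-\widehat v_{1,1}^2)\bigr\},
\]
because $\widehat v_1^\top\Sigma\widehat v_1=\lambda\widehat v_{1,1}^2+\tau(1-\widehat v_{1,1}^2)$ for $\Sigma=\diag(\lambda,\tau I_m)$. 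This is an exact finite-$n$ identity.

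Next I pass to the limit using the quoted spiked results \eqref{eq:outlier}--\eqref{eq:overlap}: under \eqref{eq:bbp}, $\widehat\mu_1\to\rho$ and $\widehat v_{1,1}^2\to\chi$ in probability. I would cite \citet{baik2006spiked,paul2007spiked} for these limits and note that the $p\times p$ and $n\times n$ Gram matrices share their nonzero spectrum. Because $a_0<\rho$ is fixed, $\widehat\mu_1-a_0\ge g$ holds with probability tending to one, which gives admissibility. On that event, $c_n(a_0)\to c(a_0)$ by the continuous mapping theorem. Substituting $f=1$ and $f=c(a_0)$ into the identity yields \eqref{eq:fixed-pcr-rank1} and \eqref{eq:fixed-dpcr-rank1}, and the noise term vanishes.

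The optimization is elementary. The limit is a quadratic in $c$, namely $\lambda(c\chi-1)^2+\tau c^2\chi(1-\chi)$. Setting its derivative to zero gives $c^\star=\lambda/\{\lambda\chi+\tau(1-\chi)\}$, and the minimal value is $\lambda\tau(1-\chi)/\{\lambda\chi+\tau(1-\chi)\}$. The main obstacle is the algebraic verification of the identities $c^\star=\rho/\lambda$, $a_0^\star=\rho-\lambda=\gamma\tau\lambda/(\lambda-\tau)$, and the closed form $\gamma\lambda\tau^2/(\lambda-\tau)^2$. To carry it out, write $x=\gamma\tau/(\lambda-\tau)$. Then $\rho=\lambda(1+x)$ and $\chi=(1-x\tau/(\lambda-\tau))/(1+x)$. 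I would compute $\lambda\chi+\tau(1-\chi)=\tau+(\lambda-\tau)\chi$ and check that it equals $\lambda/(1+x)$; this reduces to $\tau(1+x)+(\lambda-\tau)-x\tau=\lambda$, which holds. It follows that $c^\star=1+x=\rho/\lambda$, and since $c=\rho/(\rho-a_0)$, we get $a_0^\star=\rho-\lambda$. For the risk, $1-\chi=x\lambda/\{(\lambda-\tau)(1+x)\}$, so the minimal risk becomes $b^2\tau x\lambda/(\lambda-\tau)=b^2\gamma\lambda\tau^2/(\lambda-\tau)^2$. The alternative form $\rho(1-\chi)(1-\tau/\lambda)$ also simplifies to $\lambda x$.

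Finally, $c^\star=1+x>1$ because $x>0$. A strictly convex quadratic has a unique minimizer, so $c^\star\ne1$ implies that the optimal correction strictly beats PCR ($c=1$) whenever $b\ne0$. It also beats rank $0$, which corresponds to $c=0$ with risk $b^2\lambda$. Since $c^\star$ lies in $(1,\infty)$, the correction $a_0^\star$ lies in $(0,\rho)$ and the estimator is admissible. The bound on the improvement follows because the residual optimal risk equals the tail term $\tau c^{\star2}\chi(1-\chi)$ plus the bias, and both are positive at fixed $\gamma$.
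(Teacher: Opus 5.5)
Your proposal is correct and follows the paper's proof essentially step for step. It uses the same exact rank-one decomposition of signal and noise risk in terms of $\widehat\mu_1$ and $\widehat v_{1,1}^2$, the same spiked-covariance limits combined with continuous mapping, and the same quadratic minimization. Your substitution $x=\gamma\tau/(\lambda-\tau)$ is the paper's identity $\lambda\chi+\tau(1-\chi)=\lambda^2/\rho$ in another form. The only cosmetic difference is that you get strict improvement from uniqueness of the minimizer of a strictly convex quadratic (valid because $\chi>0$ under \eqref{eq:bbp}), whereas the paper completes the square.
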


The rank-$1$ boundary calculation has three prediction consequences.  The
risk-optimal positive scalar correction strictly improves ordinary
rank-$1$ PCR; the optimal correction generally differs from the bulk mean
because rescaling the head also magnifies persistent tail content in the
empirical eigenvector; and the optimal limiting risk in
\eqref{eq:fixed-opt-risk} remains positive at
fixed aspect.  The collapsing-bulk limit below then recovers mean-floor
subtraction, connecting this boundary calculation to the sharp-floor theorem.

\subsection{Scale normalization}

The fixed-aspect formulas also do not require an absolute constant-order spike.
Put
\begin{equation}
\xi=\frac{\lambda}{\tau},
\qquad
\rho^\circ_\gamma(\xi)
=\frac{\rho_\gamma(\lambda,\tau)}{\tau}
=\xi\left(1+\frac{\gamma}{\xi-1}\right),
\qquad
\alpha_0=\frac{a_0}{\tau}.
\label{eq:fixed-normalization}
\end{equation}
Then $\chi_\gamma$ depends only on $(\xi,\gamma)$,
$c(a_0)=\rho_\gamma^\circ/(\rho_\gamma^\circ-\alpha_0)$, and the rank-$1$
signal risk is
\begin{equation}
b^2\tau\left[
\xi\{c(a_0)\chi_\gamma-1\}^2
+c(a_0)^2\chi_\gamma(1-\chi_\gamma)
\right].
\label{eq:fixed-normalized-risk}
\end{equation}
Thus, for a triangular array with parameter tuple
$(m_n,\lambda_n,\tau_n,b_n,\sigma_{\varepsilon,n}^2)$ and $b_n\ne0$
eventually, define
$\gamma_n=m_n/n$, $a_n=\gamma_n\tau_n$,
$\xi_n=\lambda_n/\tau_n$, and
$\alpha_{0,n}=a_{0,n}/\tau_n$.  Suppose
$\gamma_n\to\gamma\in(1,\infty)$,
$\xi_n\to\xi>1+\sqrt\gamma$, and
$\alpha_{0,n}\to\alpha_0<\rho_\gamma^\circ(\xi)$.
After division by $b_n^2\tau_n$, the signal formulas depend only on
$(\xi_n,\gamma_n,\alpha_{0,n})$; the absolute scale $\tau_n$ may vanish or
diverge.  The strict limiting gap permits a scaled admissibility margin
$g_n=\bar g\tau_n$ for any fixed
$0<\bar g<\rho_\gamma^\circ(\xi)-\alpha_0$, after reducing $\bar g$ if
necessary.  Under this margin the response-noise contribution remains
$O_{\Pp}(\sigma_{\varepsilon,n}^2/n)$, and it is negligible relative to the
signal scale when
$\sigma_{\varepsilon,n}^2/n=o(b_n^2\tau_n)$; otherwise it must be retained.
The normalized optimal correction is
\begin{equation}
\frac{a_{0,n}^\star}{\tau_n}
=\rho_{\gamma_n}^\circ(\xi_n)
\{1-\chi_{\gamma_n}(\xi_n)\}
\left(1-\frac1{\xi_n}\right).
\label{eq:fixed-normalized-correction}
\end{equation}

As an algebraic sharp-floor limit, suppose $\gamma_n\to\infty$ and
$\kappa_n=\lambda_n/(\gamma_n\tau_n)=\xi_n/\gamma_n\to
\kappa\in(0,\infty)$.  The fixed-aspect closed forms give
\begin{equation}
\frac{\rho_{\gamma_n}}{a_n}\to1+\kappa,
\qquad
\chi_{\gamma_n}\to\frac{\kappa}{1+\kappa},
\qquad
\frac{a_{0,n}^\star}{a_n}\to1.
\label{eq:sharp-limit-optimal-floor}
\end{equation}
This pointwise large-$\gamma$ limit explains why the mean floor emerges as the
bulk becomes relatively sharp.  The nonasymptotic $p_n/n\to\infty$ transfer is
given separately by Theorem~\ref{cor:gaussian-head}.

Mean-floor subtraction $a_0=\gamma\tau$ need not equal $a_0^\star$; indeed,
$a_0^\star/a_0=\lambda/(\lambda-\tau)>1$ at fixed parameters.  Exact
cancellation of head attenuation would use multiplier $1/\chi$ and denominator
$\rho\chi$, while prediction-risk optimization uses denominator
\begin{equation}
d_{\rm pred}^\star
=\frac{\rho}{c^\star}
=\rho\frac{\lambda\chi+\tau(1-\chi)}{\lambda}
=\lambda.
\label{eq:mp-denominator}
\end{equation}
Thus the optimal corrected denominator in this flat one-spike model is exactly
the population spike.  The difference from exact head cancellation is the
price of tail loadings in clean prediction risk.

\subsection{General limiting cutoffs}

Complete the positive-spectrum eigenvectors
$\widehat v_1,\ldots,\widehat v_n$ to an orthonormal basis
$\widehat v_1,\ldots,\widehat v_p$ of $\R^p$, and set
$\widehat\mu_{n+1}=\cdots=\widehat\mu_p=0$ on the resulting null-space basis.
Define the full and positive
spike spectral measures
\begin{equation}
\nu_{\lambda,n}^{\mathrm{full}}
=\sum_{i=1}^{p}|\langle\widehat v_i,e_1\rangle|^2
\delta_{\widehat\mu_i},
\qquad
\nu_{\lambda,n}
=\sum_{i=1}^{n}|\langle\widehat v_i,e_1\rangle|^2
\delta_{\widehat\mu_i}
\ \Longrightarrow\ \nu_\lambda .
\label{eq:spike-measure}
\end{equation}

\begin{proposition}[Specialized signal spectral-measure limit]
\label{prop:green-romanov-import}
Under Assumption~\ref{ass:fixed-aspect}, the signal-weighted spectral-measure results of
\citet{green2025pcr} give, for every bounded continuous $\varphi$,
\begin{equation}
e_1^\top\varphi(\widehat\Sigma)e_1
\xrightarrow{\Pp}
\int\varphi(x)\,d\nu_\lambda^{\mathrm{full}}(x).
\label{eq:green-romanov-import}
\end{equation}
Here $\nu_\lambda^{\mathrm{full}}$ is a probability measure, its positive
restriction is $\nu_\lambda$, and its null-space mass is
\begin{equation}
\nu_\lambda^{\mathrm{full}}(\{0\})
=\left\{1+\frac{\lambda}{\tau(\gamma-1)}\right\}^{-1}>0.
\label{eq:fixed-null-mass}
\end{equation}
More precisely, the empirical signal measure $\widehat B_n$ in Section 5.1 of
that paper is exactly $\nu_{\lambda,n}^{\mathrm{full}}$ here.  Its Lemma 1 and
the weak-convergence argument immediately following it yield
\eqref{eq:green-romanov-import}, while its Lemma 3 gives the support and
outlier-atom structure.  Appendix~\ref{app:proofs} derives
\eqref{eq:fixed-null-mass} directly by a Sherman--Morrison calculation.
Above the transition,
$\nu_\lambda$ has an atom of mass $\chi$ at $\rho$; the remaining nonzero mass
is supported on the MP bulk.
\end{proposition}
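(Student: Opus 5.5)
The plan has three parts: an exact finite-$n$ identity, the import of weak convergence from \citet{green2025pcr}, and a direct computation of the null-space atom.

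\textbf{Step 1: exact finite-$n$ identity.} With the completed orthonormal basis $\widehat v_1,\ldots,\widehat v_p$ and $\widehat\mu_i=0$ for $i>n$, the spectral theorem gives
\[
e_1^\top\varphi(\widehat\Sigma)e_1=\sum_{i=1}^p|\langle\widehat v_i,e_1\rangle|^2\varphi(\widehat\mu_i)=\int\varphi\,d\nu_{\lambda,n}^{\mathrm{full}}
\]
for every $\varphi$. The total mass is $\norm{e_1}^2=1$, so $\nu_{\lambda,n}^{\mathrm{full}}$ is a probability measure. It coincides with the empirical signal measure $\widehat B_n$ of \citet{green2025pcr}.

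\textbf{Step 2: weak convergence.} Their Lemma~1 and the argument after it give $\widehat B_n\Rightarrow\nu_\lambda^{\mathrm{full}}$ in probability, which yields \eqref{eq:green-romanov-import}. The limit is again a probability measure, because the masses are tight: $\widehat\mu_1=O_{\Pp}(1)$. Their Lemma~3 gives the support description, namely the MP bulk $[\ell_\gamma,u_\gamma]$, an outlier atom at $\rho$ of mass $\chi$ above the BBP threshold \eqref{eq:bbp}, and possibly an atom at $0$. I would only check that the specialization to one spike and a flat bulk is consistent with \eqref{eq:outlier}--\eqref{eq:overlap}.

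\textbf{Step 3: the null mass.} A weak limit does not by itself identify an atom, so I would compute the finite-$n$ null mass directly and then pass to the limit. The null mass is
\[
\nu_{\lambda,n}^{\mathrm{full}}(\{0\})=\lim_{\varepsilon\downarrow0}\varepsilon\,e_1^\top(\widehat\Sigma+\varepsilon I)^{-1}e_1 .
\]
Write $X=[\sqrt\lambda z\;X_T]$ and apply the Schur complement, which is the block form of Sherman--Morrison. Together with the push-through identity
\[
X_T(X_T^\top X_T/n+\varepsilon I)^{-1}X_T^\top/n=K_T(K_T+\varepsilon I)^{-1},
\]
where $K_T=K_T^{(1)}$, this gives
\[
\varepsilon\,e_1^\top(\widehat\Sigma+\varepsilon I)^{-1}e_1=\Bigl\{1+\frac{\lambda}{n}z^\top(K_T+\varepsilon I)^{-1}z\Bigr\}^{-1}.
\]
Since $m>n$, the matrix $K_T$ is almost surely invertible, so letting $\varepsilon\to0$ yields
\[
\nu_{\lambda,n}^{\mathrm{full}}(\{0\})=\bigl\{1+\lambda\,n^{-1}z^\top K_T^{-1}z\bigr\}^{-1}.
\]
Because $z$ is independent of $K_T$, conditional on $K_T$ the quadratic form concentrates around $n^{-1}\tr K_T^{-1}$, with variance $O(n^{-1}\norm{K_T^{-1}}_{\op}^2)$. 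The MP inverse moment gives $\int x^{-1}dF_{\gamma,\tau}(x)=\{\tau(\gamma-1)\}^{-1}$. Together these yield the limit \eqref{eq:fixed-null-mass}, which is strictly positive.

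\textbf{Main obstacle: identifying the atom of the limit.} I expect the hardest step to be showing that this limit equals $\nu_\lambda^{\mathrm{full}}(\{0\})$, and also the uniform bound on $\norm{K_T^{-1}}_{\op}$ used in Step~3. For both I would invoke the Bai--Yin hard-edge bound $\widehat\mu_n\ge\ell_\gamma/2$ with probability tending to one, which uses $\gamma>1$ and eigenvalue interlacing for the rank-one head. On that event the positive part of $\nu_{\lambda,n}^{\mathrm{full}}$ lies in $[\ell_\gamma/2,\infty)$. Now take a continuous $\varphi$ with $0\le\varphi\le1$, $\varphi=1$ on $[0,\ell_\gamma/8]$, and $\varphi=0$ on $[\ell_\gamma/4,\infty)$. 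Then $\int\varphi\,d\nu^{\mathrm{full}}_{\lambda,n}$ equals the null mass with probability tending to one, and by Step~2 it converges to $\int\varphi\,d\nu_\lambda^{\mathrm{full}}=\nu_\lambda^{\mathrm{full}}(\{0\})$. Here the last equality holds because the limit puts no mass in $(0,\ell_\gamma/2)$, by the Portmanteau theorem applied to that open interval. Hence the two limits agree, and the positive restriction of $\nu_\lambda^{\mathrm{full}}$ is $\nu_\lambda$.
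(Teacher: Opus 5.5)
Your proposal is correct and follows the paper's proof. Like the paper, you import weak convergence and the outlier/bulk structure from Green and Romanov, and you write the finite-$n$ null mass in closed form as $\{1+\lambda n^{-1}z^\top K_T^{-1}z\}^{-1}$. You then pass to the limit using conditional concentration of the quadratic form and the Marchenko--Pastur inverse moment, and identify the limiting atom by the separation of the positive sample spectrum from zero.

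The differences are cosmetic. You reach the closed form through the resolvent limit $\varepsilon\, e_1^\top(\widehat\Sigma+\varepsilon I)^{-1}e_1$ and a Schur complement, whereas the paper uses the kernel projection $1-x^\top(A+xx^\top)^{-1}x$ and Sherman--Morrison. Your bump-function and Portmanteau argument spells out the paper's one-line remark that zero is separated from the positive sample spectrum.
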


The import's hypotheses are transparent in this specialization.  Gaussian
entries supply all required moments; the population spectral distribution
$(\delta_\lambda+m\delta_\tau)/(m+1)$ converges to the compactly supported
profile $\delta_\tau$ bounded away from zero; $\lambda$ is a single fixed
spike; and the deterministic signal direction $e_1$ has a convergent
signal-weighted measure.  Finally, $\gamma>1$ gives the regular positive MP
edges in \eqref{eq:mp-support}, separated from zero, while the cutoff below is
kept away from the edges and from atoms.  Thus restriction to $(0,\infty)$
gives \eqref{eq:spike-measure}.  This abstract definition avoids imposing a
Stieltjes-transform sign convention on the formulas below.

For a limiting cutoff $s$ that is not a mass point, let
$r_s=\#\{i:\widehat\mu_i\ge s\}$.  For admissible
$0\le a_0\le s-g$, define
\begin{equation}
f_{a_0,s}(x)=
\begin{cases}
x/(x-a_0),&x\ge s,\\
0,&x<s,
\end{cases}
\quad
C(a_0,s)=\int f_{a_0,s}\,d\nu_\lambda,
\quad
B(a_0,s)=\int f_{a_0,s}^2\,d\nu_\lambda.
\label{eq:fixed-functionals}
\end{equation}

\begin{theorem}[Pointwise one-spike fixed-aspect risk functional]
\label{thm:fixed-functional}
Under Assumption~\ref{ass:fixed-aspect}, for a fixed cutoff $s$ at positive distance
from the bulk edges, satisfying
$\nu_\lambda(\{s\})=0$, a fixed corrected margin $g>0$, and each fixed
$a_0\in[0,s-g]$,
\begin{align}
\Rbar_X\{\widehat\beta_{\dPCR}(a_0,r_s)\}
\xrightarrow{\Pp}
R_{\dPCR}^{(\gamma)}(a_0,s)
=&\ b^2\left[
\lambda\{C(a_0,s)-1\}^2
+\tau\{B(a_0,s)-C(a_0,s)^2\}
\right]
\nonumber\\
&+\sigma_\varepsilon^2\tau
\int_s^\infty\frac{x}{(x-a_0)^2}\,dF_{\gamma,\tau}(x).
\label{eq:fixed-functional-risk}
\end{align}
For ordinary PCR, $a_0=0$, $A(s)=\nu_\lambda([s,\infty))$, and
\begin{align}
\Rbar_X\{\widehat\beta_{\PCR}(r_s)\}
&\xrightarrow{\Pp}
R_{\PCR}^{(\gamma)}(s)
\nonumber\\
&=b^2\{\lambda(1-A(s))^2+\tau(A(s)-A(s)^2)\}
+\sigma_\varepsilon^2\tau\int_s^\infty x^{-1}\,dF_{\gamma,\tau}(x).
\label{eq:fixed-pcr-functional}
\end{align}
\end{theorem}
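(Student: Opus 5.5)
The plan is to write the conditional risk exactly as a signal term plus a response-noise term and then pass each to the limit with Proposition~\ref{prop:green-romanov-import}.

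\emph{Signal term.} Since $\beta^\star=be_1$, the fitted coefficient with multiplier $f=f_{a_0,s}$ applied to the empirical eigenvalues is
\[
\widehat\beta=\sum_i f(\widehat\mu_i)\widehat v_i\widehat v_i^\top\beta^\star+\text{noise part}
=b\,f(\widehat\Sigma)e_1+\text{noise part}.
\]
Writing $w=f(\widehat\Sigma)e_1-e_1$ and $\Sigma=\diag(\lambda,\tau I_m)$, the signal risk is
\[
b^2 w^\top\Sigma w=b^2\bigl[\lambda(e_1^\top f(\widehat\Sigma)e_1-1)^2+\tau\bigl(\|f(\widehat\Sigma)e_1\|^2-(e_1^\top f(\widehat\Sigma)e_1)^2\bigr)\bigr].
\]
Here the tail coordinates of $w$ are those of $f(\widehat\Sigma)e_1$, and $\|f(\widehat\Sigma)e_1\|^2=e_1^\top f^2(\widehat\Sigma)e_1$. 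Both $e_1^\top f(\widehat\Sigma)e_1$ and $e_1^\top f^2(\widehat\Sigma)e_1$ are integrals of $f$ and $f^2$ against $\nu^{\rm full}_{\lambda,n}$.

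The main obstacle is that $f$ is discontinuous at $s$ and unbounded near $a_0$. I would handle it as follows.
\begin{itemize}[leftmargin=2em]
\item \emph{Removing the pole.} Since $a_0\le s-g$, on $[s,\infty)$ the function $f$ is bounded and continuous. Because the support of $\nu_\lambda$ lies in the bulk, the outlier and the point $0$, it is compact, so I can cap $f$ above the largest limit point without loss.
\item \emph{Removing the jump.} $s$ is not an atom and lies at positive distance from the bulk edges. I sandwich the indicator between continuous functions differing only on $(s-\epsilon,s+\epsilon)$, whose $\nu_\lambda$ mass vanishes as $\epsilon\to0$.
\item \emph{Identifying the empirical rank.} With probability tending to one, no empirical eigenvalue sits near $s$ except with vanishing spike-weight. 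More precisely, the extreme eigenvalues converge to the MP edges and the outlier, so the counting event $r_s=\#\{\widehat\mu_i\ge s\}$ matches the cutoff. It also guarantees admissibility $\widehat\mu_{r_s}-a_0\ge g'$ with high probability.
\end{itemize}
The null-space mass at $0$ is excluded since $s>0$. Proposition~\ref{prop:green-romanov-import}, together with continuity of the quadratic map, then gives convergence in probability to $C(a_0,s)$ and $B(a_0,s)$, yielding the bracketed signal limit.

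\emph{Noise term.} The noise part is $\sum_{i\le r_s}(\widehat\mu_i-a_0)^{-1}\widehat v_i\widehat v_i^\top X^\top\varepsilon/n$. Its conditional expected $\Sigma$-risk is
\[
\frac{\sigma_\varepsilon^2}{n}\sum_{i\le r_s}\frac{\widehat\mu_i\,\widehat v_i^\top\Sigma\widehat v_i}{(\widehat\mu_i-a_0)^2},
\]
using $\widehat v_i^\top X^\top X\widehat v_j/n=\widehat\mu_i\delta_{ij}$. Split $\widehat v_i^\top\Sigma\widehat v_i=\tau+(\lambda-\tau)\langle\widehat v_i,e_1\rangle^2$. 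The $(\lambda-\tau)$ part is $n^{-1}$ times a bounded integral against $\nu_{\lambda,n}$, so it is $O_\Pp(1/n)$ and vanishes. The $\tau$ part is $\sigma_\varepsilon^2\tau$ times the empirical spectral average of $x(x-a_0)^{-2}\1\{x\ge s\}$ over the $n$ nonzero eigenvalues of $\widehat\Sigma$. These coincide with those of $K$, which is a rank-one perturbation of $K_T^{(1)}$, so by interlacing its ESD converges to $F_{\gamma,\tau}$. The same edge-smoothing argument (bounded integrand on $[s,\infty)$, no mass at $s$) gives $\sigma_\varepsilon^2\tau\int_s^\infty x(x-a_0)^{-2}dF_{\gamma,\tau}$.

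Finally, setting $a_0=0$ gives $f=\1\{x\ge s\}=f^2$, so $B=C=A(s)$, which recovers \eqref{eq:fixed-pcr-functional}.
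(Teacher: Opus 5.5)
Your proposal is correct and follows the paper's derivation essentially step for step. The signal part is the same exact identity $b^2[\lambda(e_1^\top f(\widehat\Sigma)e_1-1)^2+\tau\{e_1^\top f(\widehat\Sigma)^2e_1-(e_1^\top f(\widehat\Sigma)e_1)^2\}]$, passed to the limit through Proposition~\ref{prop:green-romanov-import} and an indicator sandwich at the non-atom $s$ (the paper's Lemma~\ref{lem:fixed-cutoff-convergence}). The noise part is the same exact sum, split along $\Sigma=\tau I+(\lambda-\tau)e_1e_1^\top$ into a Marchenko--Pastur integral plus an $O(1/n)$ spike remainder.

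Two of your steps are unnecessary, though harmless. You do not need to identify the empirical rank: $r_s$ is defined by counting $\widehat\mu_i\ge s$, so the noiseless fit is exactly $b\,f_{a_0,s}(\widehat\Sigma)e_1$, and $\widehat\mu_{r_s}\ge s\ge a_0+g$ makes admissibility automatic. You also do not need to cap $f$, since $f_{a_0,s}\le s/g$ and $x(x-a_0)^{-2}\1\{x\ge s\}\le s/g^2$ already.
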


Let the extended limiting cutoff domain be
$\mathcal S_\gamma=[0,\infty]$, where $s=0$ retains every positive empirical
eigenvalue and $s=\infty$ is rank zero.  For any fixed parameters satisfying
Assumption~\ref{ass:fixed-aspect}, define
\begin{equation}
L_{\PCR}:=\inf_{s\in\mathcal S_\gamma}R_{\PCR}^{(\gamma)}(s).
\label{eq:fixed-pcr-positive-level}
\end{equation}
By \eqref{eq:fixed-null-mass},
$A(s)\le1-\nu_\lambda^{\mathrm{full}}(\{0\})<1$ uniformly over this domain.
The signal term in \eqref{eq:fixed-pcr-functional} is therefore at least
$b^2\lambda\{\nu_\lambda^{\mathrm{full}}(\{0\})\}^2$, and hence
$L_{\PCR}>0$.  Upgrading the pointwise result to
the finite-sample all-rank statement
\begin{equation}
\liminf_{n\to\infty}\inf_r\Rbar_X\{\widehat\beta_{\PCR}(r)\}
\ge L_{\PCR}
\label{eq:fixed-lower-target}
\end{equation}
requires uniform convergence over cutoffs and one-sided control at the bulk
edges.  The fixed-aspect theorem therefore yields pointwise cutoff limits only.

The fixed-aspect expression also quantifies the cost of PCR's fixed unit
multiplier.  For a
PCR projector with spike mass $A\in[0,1)$, set
$D(A)=A\{\tau+(\lambda-\tau)A\}$.  Its signal risk decomposes as
\begin{align}
R_{\rm opt\text{-}scale}(A)
&=b^2\frac{\lambda\tau(1-A)}{\tau+(\lambda-\tau)A},
\label{eq:optimal-scale-risk}\\
R_{\rm unit\text{-}excess}(A)
&=b^2\frac{A(\lambda-\tau)^2(1-A)^2}
{\tau+(\lambda-\tau)A},
\label{eq:unit-multiplier-excess}\\
R_{\rm signal,PCR}(A)&=R_{\rm opt\text{-}scale}(A)
+R_{\rm unit\text{-}excess}(A).
\label{eq:decomposition}
\end{align}
The first term is the minimum signal risk attainable by multiplying the selected
projector by one scalar; the second is the excess risk imposed by PCR's unit
multiplier.  The excess is strictly positive whenever $0<A<1$ and
$\lambda\ne\tau$.

\section{Simulations}
\label{sec:simulations}

Every Monte Carlo comparison is paired: methods within a
panel share the same design draw and, whenever response noise is drawn, the
same noise realization.  The heterogeneous rank experiment and the
fixed-aspect panels integrate response noise exactly, so they report
$\Rbar_X$ for each realized design.  The Gaussian simulations additionally
report dPCR with a training-spectrum plug-in correction, at the prespecified
rank in theorem comparisons and with explicitly stated anchors in exploratory
or validation sweeps.  The validation experiment instead
selects both rank and correction on an independent noisy validation sample.
Error bars are Monte Carlo
standard errors, and the reported metric is clean population prediction risk
\eqref{eq:pop-risk}, not noisy test error.  Collapsing sharp-tail and broad
Gaussian spiked-covariance designs are reported separately, so a broad MP bulk
is not treated as an isotropic floor.
The power-law settings instantiate the dimensionless regime
$\kappa_n\asymp1$ and $\gamma_n\to\infty$.  The computational archive
contains the complete simulation code, the per-experiment Monte Carlo summary
files, and a run manifest recording every setting.

Analytical curves display the proportional-limit formulas, while paired Monte
Carlo panels assess the sharp-floor and fixed-aspect predictions at the stated
finite sample sizes.  Table~\ref{tab:simulation-settings} lists the designs in the
order in which their figures are discussed below.

\begin{table}[!tbp]
\centering
\footnotesize
\caption{Simulation settings.  Here $E_H$ denotes
total head prediction energy and $a=T_1/n$; for a flat tail,
$a=m\lambda_{h+1}/n$.  Rows for the fixed-aspect figures use the local
$(\lambda,\tau)$ notation of Section~\ref{sec:fixed}.}
\label{tab:simulation-settings}
\begin{tabular}{>{\raggedright\arraybackslash}p{0.14\linewidth}
>{\raggedright\arraybackslash}p{0.29\linewidth}
>{\raggedright\arraybackslash}p{0.48\linewidth}}
\toprule
Figure & Design and purpose & Settings \\
\midrule
\ref{fig:matched}
& Rank-one spiked covariance limit across spike size
& $\gamma=4$, $\tau=1$, $E_H=1$; deterministic outlier/overlap formulas. \\
\ref{fig:gaussian-sharp}
& Full Gaussian sharp-floor spiked covariance experiment
& $n\in\{35,45,55,70,85,100,120,140\}$,
$m=\lceil n^{1.75}\rceil$,
$\lambda_{2,n}=n/m$ so $a=1$, $\lambda_{1,n}=E_H=1$,
$\sigma_\varepsilon=0.5$, $64$ replications; oracle and plug-in floors on
identical draws. \\
\ref{fig:heterogeneous-ranks}
& Gaussian multi-head, heterogeneous-tail rank sweep
& $n\in\{45,55,70,85,105,125,150,180\}$,
$m=\lceil n^{1.5}\rceil$, $a=1$, six head eigenvalues from $1.80$ to $.80$
with total $E_H=1$, and a nonflat quadratic tail profile normalized to
$T_1=n$.  Tail prediction energy is $E_T/E_H=.4n^{-1/2}$;
$\sigma_\varepsilon=.5$, $64$ design replications, ranks $0$ through $12$,
and exact conditional risk.  Plug-in dPCR uses $\widehat a_6$ and a $10\%$
relative denominator margin. \\
\ref{fig:fixed}
& Gaussian rank-one spiked covariance aspect-ratio sweep
& $n=100$, $\gamma\in\{1.25,1.5,2,2.75,4,5.5,8,12\}$, $\tau=1$,
$\lambda/(\gamma\tau)=2$, $\sigma_\varepsilon=0.5$, $64$ replications;
per-replication plug-in floor \eqref{eq:plugin-floor}. \\
\ref{fig:fixed-spike-sweep}
& Gaussian fixed-aspect spiked covariance spike-strength sweep
& $n=100$, $\gamma=4$, $\tau=1$,
$\lambda/(\gamma\tau)\in\{.80,.90,1,1.15,1.35,1.60,2,2.5,3\}$,
$\sigma_\varepsilon=0.5$, $64$ replications; per-replication plug-in floor
\eqref{eq:plugin-floor}. \\
\ref{fig:adaptive-validation}
& Joint validation selection in a Gaussian sharp-floor model
& Training and validation sizes $n\in\{45,55,70,85,105,125,150,180\}$,
$m=\lceil n^{1.5}\rceil$, $a=1$,
$(\lambda_1,\lambda_2,\lambda_3)=(1.6,1.2,0.8)$,
$E=(0.35,0.35,0.30)$,
$\sigma_\varepsilon=0.5$, $64$ replications; ranks $0$ through $12$ and five
training-spectrum correction levels. \\
\ref{fig:regime-transfer}
& Gaussian rank-one transfer across three tail regimes
& $n\in\{40,50,65,80,100,125,150,180\}$, $\lambda_{1,n}=E_H=1$,
$\sigma_\varepsilon=0.5$, $64$ replications.  Light:
$m=\lceil n^{1.5}\rceil$, $a_n=n^{-0.75}$; matched sharp:
$m=\lceil n^{1.5}\rceil$, $a_n=1$; broad/high-mass:
$m=\lceil1.25n\rceil$, $a_n=1$. \\
\bottomrule
\end{tabular}
\end{table}

\paragraph*{Matched-scale gains and the limitation of rank selection}
Figure~\ref{fig:matched} illustrates the pointwise rank-one formulas in
Theorem~\ref{thm:fixed-rank1}.  It shows that scalar correction is
most consequential when the predictive spike and the aggregate floor have
comparable scale, while the advantage narrows for a very strong spike.  The
uniform PCR lower bound in Theorem~\ref{cor:gaussian-head} strengthens this
pointwise message by comparing dPCR with every retained rank.

\paragraph*{Gaussian sharp-floor scaling}
Figure~\ref{fig:gaussian-sharp} uses the fully Gaussian head-and-tail design covered
by Corollary~\ref{cor:flat-tail}.  Here
$h=1=o(n)$ and the weak-tail dimension grows as $m\asymp n^{1.75}$, so the
relative operator error in \eqref{eq:flat-wishart-collapse} decreases at the
rate $n^{-0.375}$.  The growing dPCR--PCR separation is the finite-sample
pattern predicted by Corollary~\ref{cor:flat-tail}; the oracle-dPCR risk follows
the power-law order in \eqref{eq:power-law-illustration}.  The figure also
reports dPCR with a data-driven trimmed-mean correction and the
prespecified rank $h=1$ on the same draws; as predicted by
Corollary~\ref{cor:plugin-floor}, the plug-in markers are nearly indistinguishable
from the oracle-floor markers.

\begin{figure}[!tbp]
\centering
\begin{subfigure}[t]{0.485\linewidth}
\centering
\includegraphics[width=\linewidth]{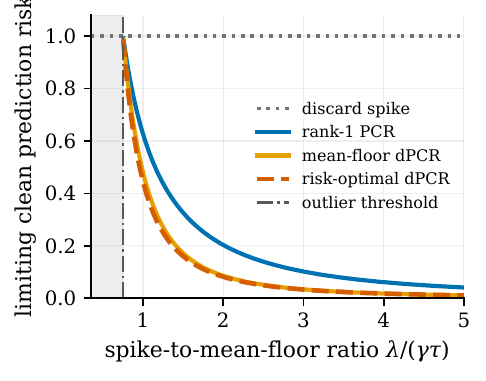}
\caption{Matched spike-to-floor theory.}
\label{fig:matched}
\end{subfigure}\hfill
\begin{subfigure}[t]{0.485\linewidth}
\centering
\includegraphics[width=\linewidth]{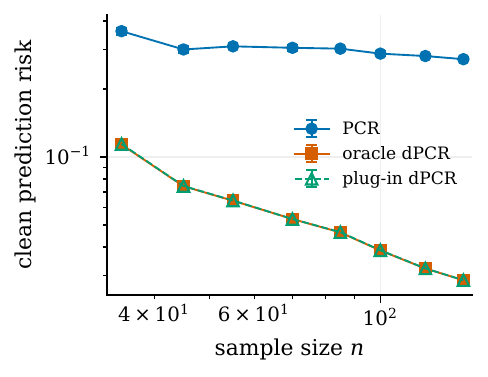}
\caption{Gaussian sharp-floor scaling.}
\label{fig:gaussian-sharp}
\end{subfigure}
\caption{Matched-scale denominator correction in two regimes.
\textbf{Left:} limiting rank-$1$ risk across the spike-to-mean-floor ratio;
the shaded region lies below the outlier threshold and curves use
Theorem~\ref{thm:fixed-rank1}.  \textbf{Right:} fully Gaussian rank-$1$ sharp-floor
simulation with $m=\lceil n^{1.75}\rceil$ and
$\lambda_2=n/m$, illustrating Corollary~\ref{cor:flat-tail}.  Lines connect paired Monte Carlo means and error
bars are Monte Carlo standard errors.  Open triangles use the same-sample
trimmed-mean floor \eqref{eq:plugin-floor}.}
\label{fig:matched-sharp}
\end{figure}

\paragraph*{Heterogeneous tails, multiple head components, and approximate
alignment.}
Figure~\ref{fig:heterogeneous-ranks} directly exercises the general random-design
result rather than its flat rank-one specialization.  There are six predictive
head coordinates, and the $m=\lceil n^{1.5}\rceil$ tail eigenvalues are
\begin{equation}
\lambda_{h+j}=\frac{n w_j}{\sum_{\ell=1}^m w_\ell},
\qquad
w_j=.35+1.30\left(1-\frac{j-1}{m-1}\right)^2,
\qquad 1\le j\le m.
\label{eq:heterogeneous-simulation-tail}
\end{equation}
Thus $a=T_1/n=1$, but the population tail is nonflat and has
$d_1\asymp d_2\asymp m$.  To test Corollary~\ref{cor:approx-alignment} as well, the
tail carries prediction energy $E_T/E_H=.4n^{-1/2}$, spread evenly in
prediction norm across its coordinates.

The left panel evaluates all thirteen ranks at $n=125$ using exact conditional
risk.  Ranks below six pay omitted-head loss; after all head coordinates enter,
ordinary PCR encounters the denominator-inflation barrier.  Oracle dPCR uses
$a=1$, while plug-in dPCR uses the single same-sample estimate
$\widehat a_6$ throughout the displayed sweep.  The theorem-covered point for
both corrected estimators is $r=h=6$; other ranks show how the same correction
behaves around that benchmark, subject to the stated denominator margin.  The
right panel makes the theorem comparison directly: it contrasts the minimum
conditional risk of ordinary PCR over ranks $0$ through $12$ with oracle and
plug-in dPCR at rank six.  Across all eight sample sizes, the plug-in curve
tracks the oracle-floor curve while both separate from the best PCR rank,
illustrating Theorem~\ref{cor:gaussian-head} and Corollaries~\ref{cor:plugin-floor} and \ref{cor:approx-alignment}.
For plug-in dPCR, the paired risk difference relative to best PCR changes from
$-0.0575$ (SE $0.0069$) at $n=45$ to $-0.1754$ (SE $0.0030$) at $n=180$;
rank-six oracle and plug-in candidates are admissible in all $64$ replications
at every displayed size.

\begin{figure}[!tbp]
\centering
\includegraphics[width=\linewidth]{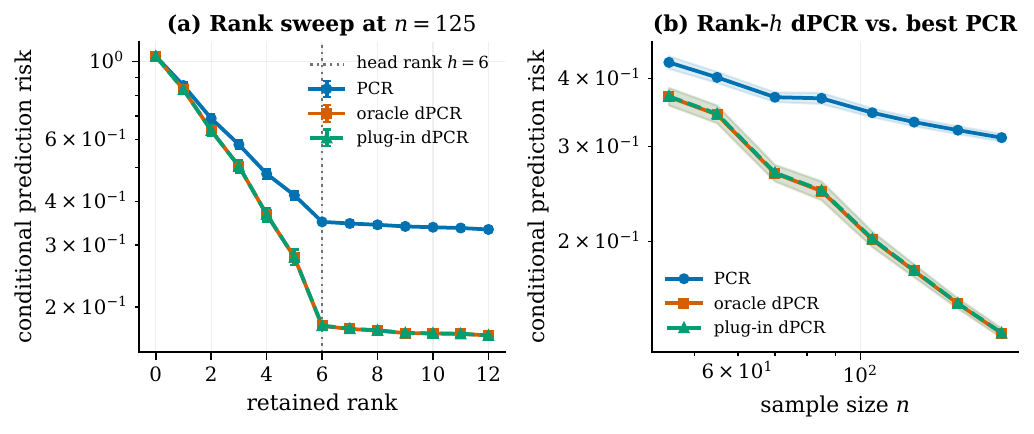}
\caption{Multi-head heterogeneous Gaussian sharp-floor simulation with
vanishing tail prediction energy.  \textbf{Left:} exact conditional population risk over
the dense rank grid $r=0,\ldots,12$ at $n=125$; the dotted line marks the six
population head coordinates.  \textbf{Right:} best ordinary-PCR conditional
risk over the same grid versus rank-$6$ oracle and plug-in dPCR at eight sample
sizes.  The nonflat tail is given
by \eqref{eq:heterogeneous-simulation-tail}, has $a=1$ and
$m=\lceil n^{1.5}\rceil$, and carries $E_T/E_H=.4n^{-1/2}$.  Methods share
every design; markers are paired Monte Carlo means and bands or error bars are
one Monte Carlo standard error across $64$ designs.  Oracle dPCR subtracts $a$,
while plug-in dPCR uses $\widehat a_6$; corrected candidates must retain at
least $10\%$ of the uncorrected boundary eigenvalue as denominator margin.}
\label{fig:heterogeneous-ranks}
\end{figure}

\paragraph*{Fixed-aspect outlier and overlap effects}
Both panels of Figure~\ref{fig:fixed-aspect-simulations} illustrate
Theorem~\ref{thm:fixed-rank1}.  The left panel varies the tail aspect ratio while
maintaining a separated spike, and the right panel varies spike strength at a
fixed aspect ratio.  Agreement between the finite-sample markers and the
outlier/overlap curves checks the two ingredients of the theorem: the sample
outlier determines the corrected denominator, while eigenvector overlap
determines how that multiplier trades head recovery against tail loading.  The
plug-in dPCR series uses a data-driven correction at the prespecified rank:
each replication subtracts the trimmed-mean floor
\eqref{eq:plugin-floor}, whose fixed-aspect limit is mean-bulk
subtraction.  Its visible distinction from the risk-optimal correction is the
property emphasized by the fixed-aspect theory: a broad MP bulk cannot in
general be summarized by subtracting its mean.  These panels concern the
pointwise rank-one limit in Theorem~\ref{thm:fixed-rank1}.

\begin{figure}[!tbp]
\centering
\begin{subfigure}[t]{0.485\linewidth}
\centering
\includegraphics[width=\linewidth]{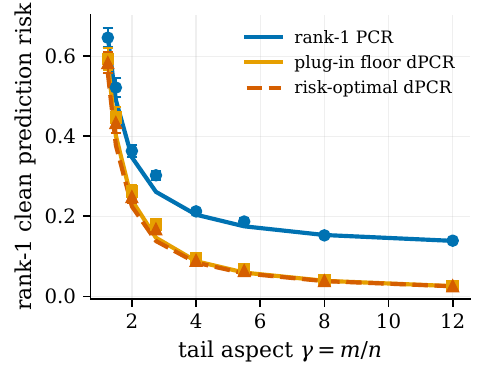}
\caption{Aspect-ratio sweep.}
\label{fig:fixed}
\end{subfigure}\hfill
\begin{subfigure}[t]{0.485\linewidth}
\centering
\includegraphics[width=\linewidth]{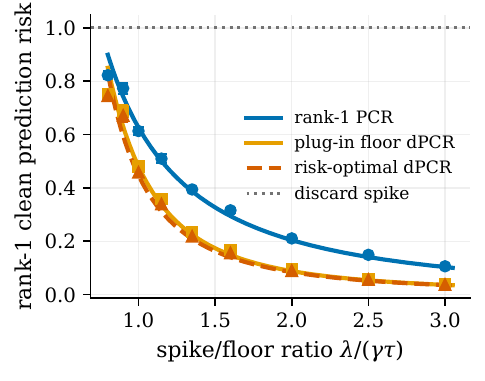}
\caption{Spike-strength sweep.}
\label{fig:fixed-spike-sweep}
\end{subfigure}
\caption{Fixed-aspect Gaussian rank-one simulations and their
outlier/overlap approximations.  \textbf{Left:} risk across tail aspect ratios
for a separated spike.  \textbf{Right:} risk across spike-to-mean-floor ratios
at $(n,\gamma,\tau)=(100,4,1)$.  Curves are finite-$n$ approximations and
markers are paired Monte Carlo means.  The plug-in series subtracts the
trimmed-mean floor \eqref{eq:plugin-floor} computed from each realized
spectrum; its curve is the mean-floor limit.  Plug-in and risk-optimal
corrections both improve rank-$1$ PCR in the displayed regimes, while their
distinction confirms that a broad MP bulk is not an isotropic floor.}
\label{fig:fixed-aspect-simulations}
\end{figure}

\paragraph*{Joint validation selection of rank and correction}
Figure~\ref{fig:adaptive-validation} implements the validation procedure in
Remark~\ref{rem:validation} without supplying either $h$ or $a$ to the selected
estimator.  The training and validation samples are independent and have the
same size $n$.  For every candidate rank $1\le r\le12$, the training spectrum
defines the rank-specific anchor
$(n-r)^{-1}\sum_{i>r}\widehat\mu_i$.  The correction grid multiplies this
anchor by $0$, $0.5$, $0.75$, $1$, or $1.25$, after removing candidates with
$\widehat\mu_r-a_0<0.1\widehat\mu_r$.  Rank zero is also included.  The pair
$(\widehat r,\widehat a)$ minimizes noisy prediction error on the validation
sample; only after selection do we evaluate clean population prediction risk.

The left panel shows that joint validation retains most of the gain over
validation-selected PCR and tracks the known-rank plug-in and oracle-floor
benchmarks as $n$ grows.  Across the eight displayed sizes, the average selected
correction is between $1.01a$ and $1.07a$.  The paired dPCR-minus-PCR risk
difference ranges from $-0.136$ (SE $0.006$) at $n=45$ to $-0.203$ (SE $0.004$)
at $n=180$.  All candidates satisfy the $10\%$ denominator margin in these
sharp-floor simulations.  The right panel reports the selected
correction together with the selected rank normalized by the largest candidate
rank.  The rank need not equal the population head dimension: validation targets
prediction risk, and at finite samples it may retain additional empirical
directions that carry leaked head signal.

\begin{figure}[!tbp]
\centering
\includegraphics[width=\linewidth]{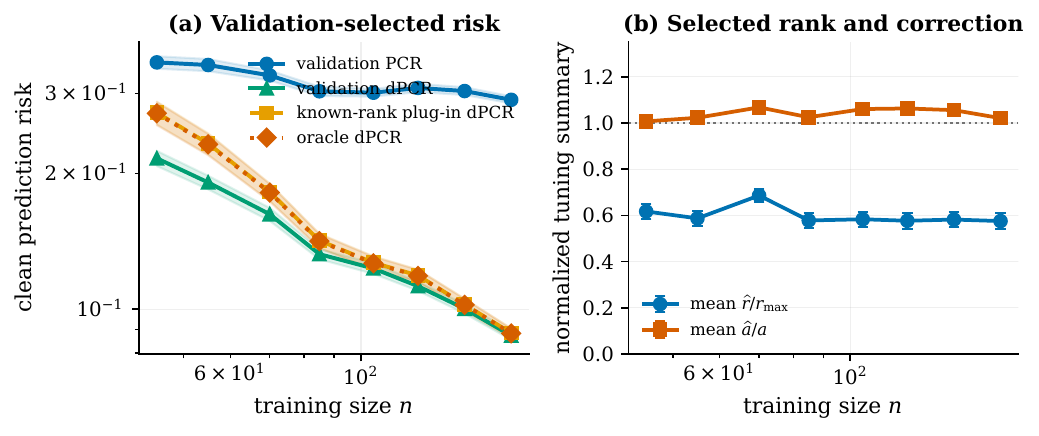}
\caption{Fully validation-adaptive dPCR in a three-spike Gaussian sharp-floor
model with $m=\lceil n^{1.5}\rceil$.  \textbf{Left:} clean population risk of
validation-selected PCR and jointly selected dPCR, with known-rank plug-in and
oracle dPCR benchmarks.  Training and validation samples are independent, and
selection uses noisy validation MSE.  Bands show one Monte Carlo standard error.
\textbf{Right:} average selected rank, normalized by $r_{\max}=12$, and average
selected correction, normalized by the true floor only for reporting.  The
validation procedure itself receives neither $h$ nor $a$.}
\label{fig:adaptive-validation}
\end{figure}

\paragraph*{Transfer across tail regimes}
Figure~\ref{fig:regime-transfer} changes the tail scale and tail aspect while holding
the rank-one population spike and prediction energy fixed.  This makes the
three qualitative regimes in the theory directly comparable.  In the light
sequence, $a_n=n^{-0.75}\to0$, so the squared attenuation bias is lower order
than the shared $n^{-1}$ response-noise scale.  Both methods become accurate
and scalar correction offers no first-order advantage.  In the matched sharp
sequence, $a_n=\lambda_{1,n}=1$ while
$m/n\asymp n^{1/2}\to\infty$; the empirical tail Gram concentrates around a
nonnegligible scalar floor, producing the separation described by
Corollary~\ref{cor:sharp-separation}.  In the broad/high-squared-mass sequence,
$m/n\to1.25$ and $a_n=1$.  The Marchenko--Pastur bulk has nonvanishing relative
width and scalar mean-floor subtraction cannot undo its directional
distortion; both rank-one procedures remain close to the risk of discarding
the spike.  Here the population spike lies below the separated-spike threshold,
so this panel is a qualitative broad-bulk diagnostic rather than an
illustration of Theorem~\ref{thm:fixed-rank1}.

\begin{figure}[!tbp]
\centering
\includegraphics[width=\linewidth]{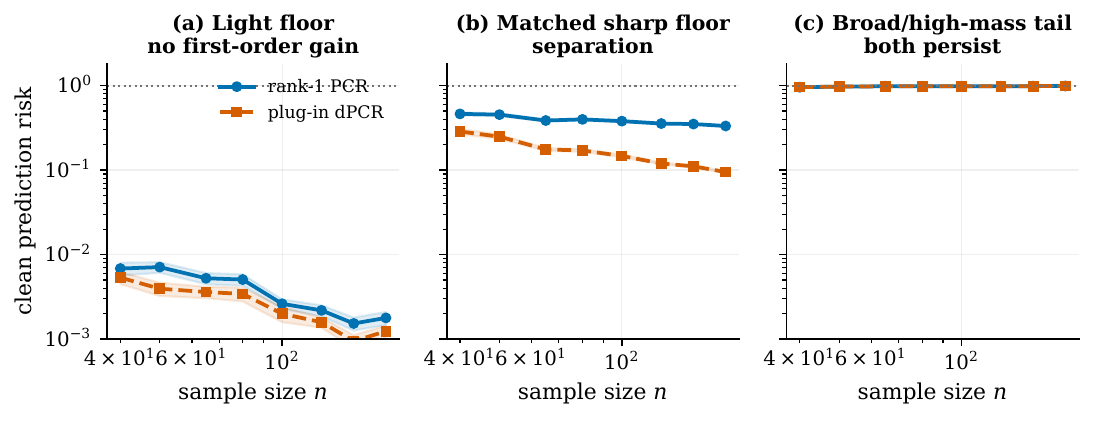}
\caption{Regime transfer in paired Gaussian rank-one spiked covariance
simulations.  Each panel uses eight sample sizes and reports clean population
prediction risk; bands are one Monte Carlo standard error and the dotted line
is the risk of discarding the unit-energy spike.  \textbf{Left:} a vanishing
aggregate floor makes both risks vanish.  \textbf{Center:} a nonnegligible but
relatively sharp floor produces persistent PCR bias and declining plug-in dPCR
risk.  \textbf{Right:} a proportional, broad tail prevents scalar mean-floor
subtraction from recovering the sharp-floor behavior, and both risks persist.}
\label{fig:regime-transfer}
\end{figure}

\section{Discussion and limitations}
\label{sec:discussion}

\paragraph*{What the sharp random-design theorem establishes}
The all-rank PCR barrier is a prediction-risk dichotomy: excluding predictive
head directions pays omission loss, while retaining them leaves
floor-induced attenuation.  The dPCR bound shows that removing the leading
head bias is cheap when the same tail has high enough effective dimension for
both its Gram contribution and its squared-tail clean-risk readout to
concentrate.  Thus a tail can have enough trace to create a floor matched to
the head at any absolute scale while having little enough squared mass for
dPCR risk to vanish.

Projected negative-ridge algebra and ordinary cluster perturbation give a
complete total-risk theorem.  A global first-order inverse error contributes
$n/d_1$ after squaring, the same order as the baseline clean-tail leakage.
Consequently, the sharper decomposition $h/d_1+(n/d_1)^2$ is unnecessary for
risk separation.  Nonflat tails still require largest-coordinate control and a
second effective dimension for the squared-tail clean-risk readout.  Because
every sub-cluster sample eigenvalue is trapped within the tail operator width
of the floor, the trimmed-mean plug-in floor attains the oracle rate, and a
median-threshold rule recovers the rank in the matched regime.  The governing
quantities are the head-to-floor ratio, the two tail effective
dimensions, the largest tail coordinates, and head prediction SNR, not a
particular absolute covariance scale.  The matched Gaussian sharpness
condition implies the boundary relation
$\lambda_{h_n+1,n}/\lambda_{h_n,n}\to0$, while the proof itself uses the
corrected empirical margin in \eqref{eq:corrected-cluster-margin}.
Remaining extensions include
designs beyond Gaussian random design---where population diagonalization need
not make the head and tail independent---and a validation theory for jointly
selecting $(a_0,r)$ outside the sharp regime.

\paragraph*{Scalar versus nonlinear denominator correction}
The fixed-aspect result is both positive and limiting.  The risk-optimal scalar
correction strictly improves rank-$1$ PCR on separated
spikes, but the mean floor is not the exact prediction-optimal correction.
The derivation balances the outlier map and eigenvector overlap against tail
prediction weight; in the flat one-spike model, the resulting optimal corrected
denominator simplifies exactly to the population spike $\lambda$.  A
component-wise MP-corrected estimator is therefore a benchmark for broad bulks
rather than the main method in the sharp-floor paper.

\paragraph*{Novelty boundary}
Building on the rank-one flat-tail prototype of
\citet{dicker2013oneshot}, we develop a growing-sample prediction theory with
several predictive components and tail spectra satisfying the sharp-floor
conditions, including both flat and nonflat cases.  The theory covers
rank-uniform comparison, clean prediction leakage, same-sample floor
estimation, approximate alignment, and the broad-bulk boundary.  The detailed
comparison with adjacent methods is given in Section~\ref{sec:related}.

\section{Conclusion}

PCR regularizes prediction through rank alone.  An aggregate tail floor can
inflate the denominators of retained terms and create bias that survives every
rank choice, including an oracle one.  De-floored PCR adds denominator
correction as a second regularization control.  In
Gaussian sharp matched-shell regimes covering flat and nonflat tails, its risk
ratio relative to the best PCR rank vanishes at arbitrary absolute covariance
scale, with a same-sample trimmed-mean floor attaining the oracle rate; the
same separation survives when tail prediction energy is $o(E_H)$.  Separate pointwise
fixed-aspect results show where scalar correction stops being exact.  The
projected negative-ridge proof also clarifies why
effective tail dimension controls both floor sharpness and the cost of clean
prediction leakage.

\clearpage
\appendix
% !TEX root = defloored_pcr.tex
\section{Proofs and supporting derivations}
\label{app:proofs}

Throughout the appendices, probabilities are over the random design unless an
expectation is explicitly indexed by the response noise.  The quantity
$\Rbar_X$ already averages over response noise conditional on the realized
design.  Constants denoted by $C$ are universal, while $C_{C_H}$ may depend
only on the fixed head-shell constant.  Each high-probability result is first
proved on a deterministic design event and then combined with the stated
Gaussian concentration bounds.

\subsection{Projected negative-ridge random-design proof}
\label{app:sharp-support}

\subsubsection{Deterministic comparison statements}

The Gaussian sharp-floor theorem is proved through the following algebraic
realized-head benchmark.  Gaussian head concentration verifies the benchmark
and transfers its operator bounds to the population prediction metric.

\begin{definition}[Deterministic realized-head benchmark]
\label{ass:conditioned-head}
Fix $X_H=\sqrt n\,UD_H^{1/2}$, where $U^\top U=I_h$ and
$D_H=\diag(s_1,\ldots,s_h)$ with $s_1\ge\cdots\ge s_h>0$.
For the deterministic comparison take $\Sigma_H=D_H$ and assume
$s_1/s_h\le C_H$ for a fixed $C_H$.
\end{definition}
The proof of Theorem 4.4 of the main paper separately transfers these operator
bounds to the population metric $\Lambda_H$; it does not condition on an
identity between $D_H$ and $\Lambda_H$.

Under Definition~\ref{ass:conditioned-head}, let $K_H=UD_HU^\top$ and put
\begin{equation}
\kappa:=\frac{s_h}{a},
\qquad
E_H:=\sum_{j=1}^hs_j(\beta_j^\star)^2,
\qquad
V_H:=\frac{\sigma_\varepsilon^2h}{n}.
\label{eq:heterogeneous-head-scales}
\end{equation}

\begin{proposition}[Projected inverse and conditional risk]
\label{prop:projected-inverse-risk}
Under Assumption 4.1 of the main paper and the benchmark in
Definition~\ref{ass:conditioned-head}, write
$\Delta=K_T^{(1)}-aI_n$, $P=UU^\top$, and
$L=K-aI_n=K_H+\Delta$.  Let $\widehat P$ be the projector onto the top $h$
eigenvalues of $L$ and define
\begin{equation}
\widehat R
=\sum_{i=1}^h\frac{1}{\widehat\mu_i-a}
\widehat u_i\widehat u_i^\top,
\qquad
R_0=K_H^+=UD_H^{-1}U^\top.
\label{eq:cluster-inverses}
\end{equation}
Assume $\beta_T^\star=0$.
The feature estimator generated by this sample-space operator is exactly
$X^\top\widehat R y/n=\widehat\beta_{\dPCR}(a,h)$.
If $\delta:=\norm{\Delta}_{\op}\le s_h/4$, then
\begin{equation}
\norm{\widehat P-P}_{\op}
\le C\frac{\delta}{s_h},
\qquad
\norm{\widehat R}_{\op}\le\frac{2}{s_h},
\qquad
\norm{\widehat R-R_0}_{\op}
\le C\frac{\delta}{s_h^2}.
\label{eq:global-cluster-inverse}
\end{equation}
In particular, $(a,h)$ is admissible whenever $g_n\le s_h/2$.

For any symmetric design-measurable sample-space operator $R=R(X)$, let
$\widehat\beta_R=X^\top Ry/n$ and $y_0=X_H\beta_H^\star$.  With
$K_H^{(2)}=UD_H^2U^\top$, its conditional prediction risk is exactly
\begin{align}
\Rbar_X(\widehat\beta_R)
={}&
\norm{
D_H^{1/2}
\left(\frac{X_H^\top RX_H}{n}-I_h\right)
\beta_H^\star}^2
+\frac1n y_0^\top RK_T^{(2)}Ry_0
\nonumber\\
&+\frac{\sigma_\varepsilon^2}{n}
\tr(RK_H^{(2)}R)
+\frac{\sigma_\varepsilon^2}{n}
\tr(RK_T^{(2)}R).
\label{eq:operator-risk-identity}
\end{align}
Consequently,
\begin{equation}
\Rbar_X(\widehat\beta_{\widehat R})
\le C_{C_H}\left[
V_H+(E_H+V_H)\left\{
\frac{\delta^2}{s_h^2}
+\frac{\norm{K_T^{(2)}}_{\op}}{s_h^2}
\right\}
\right].
\label{eq:deterministic-random-transfer}
\end{equation}
\end{proposition}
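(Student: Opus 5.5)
The proof has four parts: the sample-space representation, cluster perturbation for $\widehat P$ and $\widehat R$, the exact risk identity, and the final bound.

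\emph{Sample-space representation.} Since $K=XX^\top/n$ and $\widehat\Sigma=X^\top X/n$ share their nonzero eigenvalues, we have $\widehat v_i=X^\top\widehat u_i/\sqrt{n\widehat\mu_i}$. Hence
\[
\frac{\widehat v_i^\top X^\top y}{n(\widehat\mu_i-a)}\,\widehat v_i=\frac{X^\top\widehat u_i\widehat u_i^\top y}{n(\widehat\mu_i-a)},
\]
and summing over $i\le h$ gives $X^\top\widehat R y/n=\widehat\beta_{\dPCR}(a,h)$. Because $L=K-aI_n$ has the same eigenvectors as $K$ with eigenvalues $\widehat\mu_i-a$, $\widehat P$ is the top-$h$ projector of $K$ and $\widehat R=(\widehat PL\widehat P)^+$.

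\emph{Cluster perturbation.} Write $L=K_H+\Delta$, where $K_H$ has eigenvalues $s_1,\dots,s_h$ on $\operatorname{range}(P)$ and $0$ on its complement. Weyl's inequality gives $\widehat\mu_i-a\ge s_h-\delta\ge 3s_h/4$ for $i\le h$, and $|\widehat\mu_i-a|\le\delta$ for $i>h$. This yields $\norm{\widehat R}_{\op}\le 4/(3s_h)\le2/s_h$. It also gives admissibility: $\widehat\mu_h-a\ge 3s_h/4\ge s_h/2\ge g_n$. Davis--Kahan, with gap at least $s_h-\delta$, gives $\norm{\widehat P-P}_{\op}\le C\delta/s_h$.

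For the inverse, decompose
\[
\widehat R-R_0=(\widehat R-\widehat PR_0\widehat P)+(\widehat PR_0\widehat P-R_0).
\]
The second piece is bounded by $C\norm{\widehat P-P}_{\op}/s_h\le C\delta/s_h^2$. For the first, restrict to $\operatorname{range}\widehat P$ and use the resolvent identity
\[
A^{-1}-B^{-1}=A^{-1}(B-A)B^{-1},\qquad A=\widehat PL\widehat P,\ B=\widehat PK_H\widehat P .
\]
Here $\norm{A-B}_{\op}\le\delta$, $A^{-1}$ is bounded by $2/s_h$, and $B$ is invertible on $\operatorname{range}\widehat P$ with inverse bounded by $C/s_h$. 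The latter holds because $\widehat P$ is close to $P$, so $\widehat PK_H\widehat P\succeq(s_h-Cs_h\norm{\widehat P-P}^2)$ there. Finally, $\widehat PB^{-1}\widehat P-\widehat PR_0\widehat P$ is again $O(\delta/s_h^2)$, by the same comparison. This step is the main technical obstacle. The pseudo-inverses live on different ranges, so I would be careful to keep the bounds at first order in $\delta/s_h$, using the smallness $\delta\le s_h/4$ to absorb higher-order terms.

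\emph{Risk identity.} With $\beta_T^\star=0$ we have $y=y_0+\varepsilon$ and $\widehat\beta_R=X^\top R(y_0+\varepsilon)/n$. The head block of $\widehat\beta_R-\beta^\star$ is $X_H^\top R(y_0+\varepsilon)/n-\beta_H^\star$, and the tail block is $X_T^\top R(y_0+\varepsilon)/n$. Taking the $\Sigma$-norm with $\Sigma_H=D_H$ and averaging over $\varepsilon$ kills the cross terms. Using
\[
X_HD_HX_H^\top/n=nUD_H^2U^\top/n=K_H^{(2)},\qquad X_T\Lambda_TX_T^\top/n=K_T^{(2)},
\]
produces the four terms of \eqref{eq:operator-risk-identity}.

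\emph{Final bound.} Apply the identity with $R=\widehat R$. Since $X_H=\sqrt nUD_H^{1/2}$, we have $X_H^\top R_0X_H/n=I_h$. The first term therefore equals
\[
\norm{D_H\,U^\top(\widehat R-R_0)U\,D_H^{1/2}\beta_H^\star}^2\le s_1^2\,C\frac{\delta^2}{s_h^4}\,E_H\le C_{C_H}E_H\frac{\delta^2}{s_h^2}.
\]
For the second term, $\norm{y_0}^2=n\norm{D_H^{1/2}\beta_H^\star}^2=nE_H$ and $\norm{\widehat R}\le2/s_h$, so it is at most $4E_H\norm{K_T^{(2)}}_{\op}/s_h^2$. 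For the third term, $\tr(\widehat RK_H^{(2)}\widehat R)\le h\,s_1^2\cdot4/s_h^2\le C_{C_H}h$, which gives $C_{C_H}V_H$. For the fourth term, $\widehat R$ has rank $h$, so the trace is at most $h\norm{\widehat R}^2\norm{K_T^{(2)}}$, which gives $CV_H\norm{K_T^{(2)}}/s_h^2$. Summing the four terms yields \eqref{eq:deterministic-random-transfer}.
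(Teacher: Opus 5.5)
Your proposal is correct. The sample-space representation, the Weyl and admissibility step, the risk identity, and the final four-term bound all match the paper. Your head-noise bound $\tr(\widehat RK_H^{(2)}\widehat R)\le hs_1^2\norm{\widehat R}_{\op}^2$ is cruder than the paper's comparison with $\tr(R_0K_H^{(2)}R_0)=h$, but it suffices because only $C_{C_H}V_H$ is claimed.

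Where you genuinely differ is the inverse comparison. The paper never inverts anything on $\operatorname{range}\widehat P$. From $\widehat RL=\widehat P$, $K_HR_0=P$, $PR_0=R_0$ and $\widehat R=\widehat R\widehat P$ it obtains the exact identities
\[
\widehat RP-R_0=(\widehat P-P)R_0-\widehat R\Delta R_0,
\qquad
\widehat R-R_0=\widehat R(\widehat P-P)+(\widehat RP-R_0).
\]
Hence $\norm{\widehat R-R_0}_{\op}\le\norm{\widehat R}_{\op}\norm{\widehat P-P}_{\op}+\norm{\widehat P-P}_{\op}\norm{R_0}_{\op}+\norm{\widehat R}_{\op}\,\delta\,\norm{R_0}_{\op}$ in one line, with a universal constant.

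Your resolvent route separates the effect of $\Delta$ (the term $A^{-1}-B^{-1}$) from the pure subspace rotation. That is conceptually informative, but it costs two facts you only sketch.

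\begin{enumerate}
\item Invertibility of $B=\widehat PK_H\widehat P$ on $\operatorname{range}\widehat P$ requires $\norm{\widehat P-P}_{\op}$ to be bounded away from $1$. A generic $C\delta/s_h$ does not give this. You need the explicit sin-theta constant, $\norm{\widehat P-P}_{\op}\le\delta/(s_h-\delta)\le1/3$.
\item The claim that $\widehat PB^{-1}\widehat P$ is close to $\widehat PR_0\widehat P$ ``by the same comparison'' needs an actual argument. Set $S=U^\top\widehat PU$, so that $\norm{I_h-S}_{\op}\le\norm{\widehat P-P}_{\op}^2$, and take the orthonormal basis $F=\widehat PUS^{-1/2}$ of $\operatorname{range}\widehat P$. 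In this basis the two operators are represented by $S^{-1/2}D_H^{-1}S^{-1/2}$ and $S^{1/2}D_H^{-1}S^{1/2}$. They therefore differ by $O(\norm{\widehat P-P}_{\op}^2/s_h)$ with a universal constant.
\end{enumerate}

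A naive bound through a non-symmetric product, such as $B^{-1}K_H(\widehat P-P)R_0$, would instead lose a factor $s_1/s_h$. That loss is harmless for \eqref{eq:deterministic-random-transfer}, but it would not give the universal $C$ in \eqref{eq:global-cluster-inverse}. The paper's identity sidesteps both issues.
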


The global perturbation term is first order in the fitted operator and hence
quadratic in prediction risk.  Its scale is already no larger than the
baseline clean-tail leakage term, so a second-order Feshbach expansion is not
needed for the total-risk rate.

\begin{lemma}[Weighted Gaussian tail concentration]
\label{lem:weighted-tail-concentration}
Under Assumption 4.1 of the main paper, there is a universal constant $C$ such that,
for every $t\ge0$, with
probability at least $1-4e^{-t}$,
\begin{equation}
\norm{K_T^{(1)}-aI_n}_{\op}\le Ca q_1(t),
\qquad
\norm{K_T^{(2)}}_{\op}\le b_T\{1+Cq_2(t)\}.
\label{eq:weighted-tail-concentration}
\end{equation}
\end{lemma}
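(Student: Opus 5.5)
Both bounds in Lemma~\ref{lem:weighted-tail-concentration} are statements about a weighted sum of independent rank-one Gaussian matrices, $K=\frac1n\sum_{j>h}w_jz_jz_j^\top$, with $w_j=\lambda_j$ for $K_T^{(1)}$ and $w_j=\lambda_j^2$ for $K_T^{(2)}$. I will prove one generic bound for such sums: with probability at least $1-2e^{-t}$,
\begin{equation}
\norm{K-\tfrac{W_1}{n}I_n}_{\op}\le C\left\{\sqrt{\frac{W_2(n+t)}{n^2}}+\frac{w_{\max}(n+t)}{n}\right\},
\end{equation}
where $W_1=\sum_jw_j$, $W_2=\sum_jw_j^2$ and $w_{\max}=\max_jw_j$. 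The first claim then follows with $w_j=\lambda_j$. In that case $W_1/n=a$, and
\[
\sqrt{T_2(n+t)}/n=a\sqrt{(n+t)/d_1},\qquad \lambda_{h+1}(n+t)/n=a(n+t)\lambda_{h+1}/T_1,
\]
so the right side is $Caq_1(t)$. The second claim follows with $w_j=\lambda_j^2$. The deviation is then at most $Cb_Tq_2(t)$ by the identical computation with $d_2$ and $T_4$, and the triangle inequality $\norm{K_T^{(2)}}_{\op}\le b_T+\norm{K_T^{(2)}-b_TI_n}_{\op}$ gives the stated form. A union bound over the two events gives probability $1-4e^{-t}$.

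For the generic bound I use an $\epsilon$-net argument. Fix a $1/4$-net $\mathcal N$ of the unit sphere in $\R^n$ with $|\mathcal N|\le 9^n$. For a symmetric matrix $M$ the standard net inequality gives $\norm{M}_{\op}\le 2\max_{x\in\mathcal N}|x^\top Mx|$. For fixed unit $x$, the variables $g_j=x^\top z_j$ are i.i.d.\ $N(0,1)$, so
\[
x^\top(K-\tfrac{W_1}nI)x=\frac1n\sum_jw_j(g_j^2-1)
\]
is a weighted centered chi-square. The Laurent--Massart inequality bounds it by
\[
\frac1n\left(2\sqrt{W_2 s}+2w_{\max}s\right)
\]
on both tails with probability $1-2e^{-s}$. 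Take $s=t+n\log 9$, so that $s\le C(n+t)$, and union-bound over $\mathcal N$. This yields the displayed bound with probability at least $1-2e^{-t}$.

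The only step needing care is the net reduction for an indefinite symmetric matrix. The inequality $\norm{M}_{\op}\le(1-2\epsilon)^{-1}\max_{\mathcal N}|x^\top Mx|$ holds for $\epsilon<1/2$, and I use it with the constant absorbed into $C$. One must also check that the sub-exponential (Laurent--Massart) tail is applied with the maximal weight $w_{\max}=\lambda_{h+1}$ or $\lambda_{h+1}^2$. That weight is precisely what produces the largest-coordinate terms in $q_1$ and $q_2$. Neither $W_1/n$ nor $W_2/n$ needs $m\ge n$ here, so the bound holds for every $t\ge0$ as stated.
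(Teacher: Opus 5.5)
Your proposal is correct and follows the paper's proof essentially step for step. Both arguments prove one generic weighted-Wishart bound using a fixed-direction weighted chi-square tail (you cite Laurent--Massart where the paper cites the centered chi-square moment-generating function), the $1/4$-net inequality $\norm{M}_{\op}\le 2\max_{u\in\mathcal N}|u^\top Mu|$ with $|\mathcal N|\le 9^n$, the substitutions $w_j=\lambda_j$ and $w_j=\lambda_j^2$, and a union bound over the two events; your explicit choice $s=t+n\log 9$ is just a concrete instance of the paper's $s=C_0(n+t)$.
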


\begin{proposition}[Conditional sharp-floor risk]
\label{thm:sharp-dpcr}
Under Assumption 4.1 of the main paper and the benchmark in
Definition~\ref{ass:conditioned-head}, suppose
$\beta_T^\star=0$ and
\begin{equation}
q_1(t)\le c\min(\kappa,1)
\label{eq:sharp-smallness}
\end{equation}
for a sufficiently small universal constant $c$.  If $g_n\le s_h/2$,
then with probability at least $1-4e^{-t}$, oracle dPCR with
$(a_0,r)=(a,h)$ is admissible and
\begin{equation}
\Rbar_X\{\widehat\beta_{\dPCR}(a,h)\}
\le C_{C_H}\left[
V_H+\frac{E_H+V_H}{\kappa^2}
\left\{
q_1(t)^2+\frac{n}{d_1}\{1+q_2(t)\}
\right\}
\right].
\label{eq:sharp-dpcr-bound}
\end{equation}
The dimensionless leakage factors depend on ratios and effective dimensions.
In particular, the result imposes no constant-order assumption on $a$, $s_h$,
or $s_1$.
\end{proposition}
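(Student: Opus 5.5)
The plan is to combine the deterministic transfer bound \eqref{eq:deterministic-random-transfer} of Proposition~\ref{prop:projected-inverse-risk} with the weighted tail concentration of Lemma~\ref{lem:weighted-tail-concentration}. Under the benchmark of Definition~\ref{ass:conditioned-head}, the head $X_H$ is fixed and $Z_T$ keeps its Gaussian law conditional on the head (Assumption~\ref{ass:random-tail}). Therefore Lemma~\ref{lem:weighted-tail-concentration} gives, with probability at least $1-4e^{-t}$,
\[
\delta\le Caq_1(t),
\qquad
\norm{K_T^{(2)}}_{\op}\le b_T\{1+Cq_2(t)\}.
\]
All remaining steps are deterministic on this event.

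The first step checks the hypotheses of Proposition~\ref{prop:projected-inverse-risk}. Since $a=s_h/\kappa$ under the benchmark normalization \eqref{eq:heterogeneous-head-scales}, the smallness condition \eqref{eq:sharp-smallness} gives
\[
\delta\le Caq_1(t)\le Cc\,a\kappa=Cc\,s_h\le s_h/4
\]
once $c$ is small. The proposition then yields the cluster bounds \eqref{eq:global-cluster-inverse}. For admissibility, Weyl's inequality gives $\widehat\mu_h-a\ge s_h-\delta\ge 3s_h/4\ge g_n$ whenever $g_n\le s_h/2$. It also identifies $X^\top\widehat Ry/n$ with $\widehat\beta_{\dPCR}(a,h)$, so $(a,h)$ is admissible and \eqref{eq:deterministic-random-transfer} applies to the oracle estimator.

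The second step converts the two leakage ratios in \eqref{eq:deterministic-random-transfer} into the stated dimensionless form. For the first ratio,
\[
\frac{\delta^2}{s_h^2}\le\frac{C^2a^2q_1(t)^2}{\kappa^2a^2}=\frac{C^2q_1(t)^2}{\kappa^2}.
\]
For the second, the identity $b_T/a^2=(T_2/n)(n^2/T_1^2)=n/d_1$ gives
\[
\frac{\norm{K_T^{(2)}}_{\op}}{s_h^2}\le\frac{b_T\{1+Cq_2(t)\}}{\kappa^2a^2}=\frac{n}{d_1}\,\frac{1+Cq_2(t)}{\kappa^2}.
\]
Absorbing $C$ into $C_{C_H}$ and substituting both ratios into \eqref{eq:deterministic-random-transfer} gives exactly \eqref{eq:sharp-dpcr-bound}. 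No absolute scale enters, because only the ratios $\delta/a$, $b_T/a^2$ and $\kappa$ appear.

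The only real content is the deterministic transfer \eqref{eq:deterministic-random-transfer}, which is taken as given, so the main point to watch is bookkeeping. The constant $c$ must be chosen uniformly so that the conditions $\delta\le s_h/4$ and $\widehat\mu_h-a\ge g_n$ hold simultaneously. The conditional probability statement must also integrate correctly: because it holds for each fixed head, the bound $1-4e^{-t}$ holds unconditionally as well. If $\kappa\ge 1$, the $\min(\kappa,1)$ clause reduces to $q_1(t)\le c$, which still gives $\delta\le Cca\le Cc\,s_h$ since $a\le s_h$, so both cases are covered.
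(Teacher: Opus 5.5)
Your proposal is correct and follows the paper's proof essentially step for step. On the event of Lemma~\ref{lem:weighted-tail-concentration}, the smallness condition gives $\delta\le s_h/4$, so Proposition~\ref{prop:projected-inverse-risk} applies; substituting $\delta^2/s_h^2\le Cq_1(t)^2/\kappa^2$ and $\norm{K_T^{(2)}}_{\op}/s_h^2\le (n/d_1)\{1+Cq_2(t)\}/\kappa^2$ (via $b_T=a^2n/d_1$) into \eqref{eq:deterministic-random-transfer} then yields both the risk bound and admissibility. The paper's proof additionally records $\delta\le a/2$, which is needed only later for the PCR lower bound, not for this statement.
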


The two terms inside braces have different origins.  The term $q_1(t)^2$
controls perturbation of the projected inverse, whereas
\begin{equation}
\frac{b_T}{s_h^2}
=\frac{T_2}{ns_h^2}
=\frac{n}{d_1\kappa^2}
\label{eq:heterogeneous-leakage}
\end{equation}
is the baseline clean-tail leakage.  Thus the total rate is $n/d_1$ in a
matched sharp regime.  A refined head-sandwiched analysis could decompose the
perturbation alone into $h/d_1+(n/d_1)^2$, but that refinement cannot improve
the total rate because \eqref{eq:heterogeneous-leakage} is already $n/d_1$.

\begin{proposition}[Conditional perturbed-floor PCR barrier]
\label{thm:sharp-pcr-lower}
Under Assumption 4.1 of the main paper and the benchmark in
Definition~\ref{ass:conditioned-head}, assume
$\beta_T^\star=0$ and put
$\delta=\norm{K_T^{(1)}-aI_n}_{\op}$.  On the event
$\delta<a$,
\begin{equation}
\inf_{0\le r\le n}
\Rbar_X\{\widehat\beta_{\PCR}(r)\}
\ge
\frac{s_h}{s_1}
\left(
\frac{a-\delta}{s_1+a-\delta}
\right)^2E_H.
\label{eq:sharp-pcr-lower}
\end{equation}
If \eqref{eq:sharp-smallness} holds, the same event as in
Proposition~\ref{thm:sharp-dpcr} also gives
\begin{equation}
\inf_r\Rbar_X\{\widehat\beta_{\PCR}(r)\}
\ge c_{C_H}\frac{E_H}{(1+C_H\kappa)^2}.
\label{eq:sharp-pcr-scale-free-lower}
\end{equation}
No eigengaps inside the head cluster are assumed.  Only the corrected
head--residual cluster margin used for inverse stability is required.
\end{proposition}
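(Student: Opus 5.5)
The plan is to bound every rank at once through a single Loewner-order sandwich on the PCR sample-space operator, and then to read off the bias term of the exact risk identity \eqref{eq:operator-risk-identity}. For rank $r$, ordinary PCR is $\widehat\beta_{\PCR}(r)=X^\top R_ry/n$ with
\[
R_r=\sum_{i\le r}\widehat\mu_i^{-1}\widehat u_i\widehat u_i^\top ,
\]
and $R_0=0$. This $R_r$ is symmetric and design-measurable, so Proposition~\ref{prop:projected-inverse-risk} applies, and it remains valid when $a$ is not subtracted because the identity holds for any such $R$. All four terms in \eqref{eq:operator-risk-identity} are nonnegative. Hence it suffices to lower-bound the first term,
\[
\bigl\|D_H^{1/2}(A_r-I_h)\beta_H^\star\bigr\|^2,
\qquad
A_r:=\frac{X_H^\top R_rX_H}{n}=D_H^{1/2}U^\top R_rU D_H^{1/2},
\]
uniformly in $r$.

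\textbf{Step 1 (sandwich).} Set $c:=a-\delta>0$ on the stated event. Since $K_T^{(1)}\succeq (a-\delta)I_n$, we get $K\succeq K_H+cI_n\succ0$. So $K$ is invertible and $q=n$. Each $R_r$ is a partial sum of the spectral expansion of $K^{-1}$ with nonnegative terms, so $0\preceq R_r\preceq K^{-1}$. By operator monotonicity of inversion, $K^{-1}\preceq (UD_HU^\top+cI_n)^{-1}$. Compressing by $U$ gives $U^\top(UD_HU^\top+cI_n)^{-1}U=(D_H+cI_h)^{-1}$. Therefore
\[
0\preceq A_r\preceq D_H(D_H+cI_h)^{-1}\preceq \frac{s_1}{s_1+c}I_h ,
\]
and so $I_h-A_r\succeq \frac{c}{s_1+c}I_h$.

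\textbf{Step 2 (bias bound).} The matrix $I_h-A_r$ is symmetric with smallest eigenvalue at least $c/(s_1+c)$. Using $D_H\succeq s_hI_h$,
\[
\bigl\|D_H^{1/2}(A_r-I_h)\beta_H^\star\bigr\|^2\ge s_h\Bigl(\frac{c}{s_1+c}\Bigr)^2\|\beta_H^\star\|^2\ge \frac{s_h}{s_1}\Bigl(\frac{c}{s_1+c}\Bigr)^2E_H ,
\]
where the last step uses $E_H\le s_1\|\beta_H^\star\|^2$. The bound does not depend on $r$, and for $r=0$ it holds trivially since then $A_0=0$. Taking the infimum over $r$ gives \eqref{eq:sharp-pcr-lower}. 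No head eigengap is used anywhere, because the sandwich is purely an order argument.

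\textbf{Step 3 (scale-free form).} On the event of Lemma~\ref{lem:weighted-tail-concentration} we have $\delta\le Caq_1(t)$. Under \eqref{eq:sharp-smallness}, with $c$ small, this gives $\delta\le a/2$, hence $a-\delta\ge a/2$. The benchmark gives $s_h/s_1\ge 1/C_H$ and $s_1\le C_Hs_h=C_H\kappa a$. Because $x\mapsto x/(s_1+x)$ is increasing,
\[
\frac{a-\delta}{s_1+a-\delta}\ge \frac{a/2}{C_H\kappa a+a/2}=\frac{1}{1+2C_H\kappa}\ge\frac{1}{2(1+C_H\kappa)} .
\]
Substituting this into \eqref{eq:sharp-pcr-lower} yields \eqref{eq:sharp-pcr-scale-free-lower} with $c_{C_H}=1/(4C_H)$.

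The only delicate point is Step 1: making the bound uniform over all ranks, including ranks that cut through or go beyond the head cluster. The partial-sum domination $R_r\preceq K^{-1}$ combined with monotonicity of the inverse handles this without any perturbation theory. One also has to check that the compression identity is applied to $(UD_HU^\top+cI_n)^{-1}$ rather than to its pseudo-inverse, which the positive shift $c$ guarantees.
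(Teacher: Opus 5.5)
Your proposal is correct and follows the paper's proof essentially step for step. Both arguments use the chain $0\preceq G_r\preceq K^{-1}\preceq\{K_H+(a-\delta)I_n\}^{-1}$, compress it by $X_H$ to obtain $\diag\{s_j/(s_j+a-\delta)\}$, bound the head bias by $s_h\{(a-\delta)/(s_1+a-\delta)\}^2\norm{\beta_H^\star}^2\ge(s_h/s_1)\{(a-\delta)/(s_1+a-\delta)\}^2E_H$, and then use $\delta\le a/2$ for the scale-free form. The only differences are cosmetic: you get invertibility of $K$ deterministically from $\delta<a$ rather than almost surely from $m\ge n$, and you make the constant explicit as $c_{C_H}=1/(4C_H)$.
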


\subsubsection{Proofs of the deterministic statements and Gaussian transfer}

For a symmetric matrix $A$, let $\operatorname{eig}_i(A)$ denote its $i$th
largest eigenvalue, counted with multiplicity.  This generic matrix-eigenvalue
map is distinct from the full-sample eigenvalues $\widehat\mu_i$.

\begin{proof}[Proof of Proposition~\ref{prop:projected-inverse-risk}]
The matrices $K$ and $L=K-aI_n$ have the same eigenvectors, and the
eigenvalue of $L$ associated with $\widehat u_i$ is
$\widehat\mu_i-a$.  For every positive empirical eigenvalue,
\begin{equation*}
\widehat v_i=\frac{X^\top\widehat u_i}{\sqrt{n\widehat\mu_i}},
\qquad
\widehat v_i^\top X^\top y/n
=\sqrt{\widehat\mu_i/n}\,\widehat u_i^\top y.
\end{equation*}
Multiplying these two identities shows term by term that
\begin{equation*}
\widehat\beta_{\dPCR}(a,h)
=\sum_{i=1}^h
\frac{X^\top\widehat u_i\widehat u_i^\top y}
{n(\widehat\mu_i-a)}
=\frac{X^\top\widehat R y}{n}.
\end{equation*}

We next establish the deterministic spectral bounds.  Weyl's inequality gives
\begin{equation}
\operatorname{eig}_h(K_H+\Delta)\ge s_h-\delta\ge\frac34s_h,
\qquad
\operatorname{eig}_{h+1}(K_H+\Delta)\le\delta\le\frac14s_h.
\label{eq:app-cluster-gap}
\end{equation}
Thus the top-$h$ cluster is separated from the remaining spectrum by at least
$s_h/2$.  The Davis--Kahan sin-theta theorem, applied to the full
$h$-dimensional cluster rather than to its individual eigenvectors, gives
\begin{equation}
\norm{\widehat P-P}_{\op}
\le C\frac{\delta}{s_h}.
\label{eq:app-projector-bound}
\end{equation}
The first inequality in \eqref{eq:app-cluster-gap} also gives
$\norm{\widehat R}_{\op}\le4/(3s_h)\le2/s_h$.

It remains to compare the two inverse operators.  Since
$\widehat R(K_H+\Delta)=\widehat P$, $K_HR_0=P$, and $PR_0=R_0$, multiplication on
the right by $R_0$ gives
\begin{equation}
\widehat RP-R_0
=(\widehat P-P)R_0-\widehat R\Delta R_0.
\label{eq:app-inverse-identity}
\end{equation}
Moreover, $\widehat R=\widehat R\widehat P$, so
\begin{equation}
\widehat R-R_0
=\widehat R(\widehat P-P)+(\widehat RP-R_0).
\label{eq:app-inverse-split}
\end{equation}
Combining \eqref{eq:app-projector-bound}--\eqref{eq:app-inverse-split} with
$\norm{R_0}_{\op}=1/s_h$ proves
\begin{equation}
\norm{\widehat R-R_0}_{\op}
\le C\frac{\delta}{s_h^2}.
\label{eq:app-global-inverse-bound}
\end{equation}
No step uses an eigengap within $[s_h,s_1]$.  Also
$\widehat\mu_h-a=\operatorname{eig}_h(K_H+\Delta)\ge3s_h/4$, so $g_n\le s_h/2$
implies admissibility.

We next derive the risk identity.  For a symmetric design-measurable $R=R(X)$,
\begin{equation}
\widehat\beta_{R,H}
=\frac{X_H^\top R(y_0+\varepsilon)}{n},
\qquad
\widehat\beta_{R,T}
=\frac{X_T^\top R(y_0+\varepsilon)}{n}.
\label{eq:app-feature-blocks}
\end{equation}
Under the exact-alignment benchmark, the signal lies in the head and the
population covariance is block diagonal.
After expanding the two population-covariance norms, the signal--noise cross
terms vanish under $\E_\varepsilon(\varepsilon\varepsilon^\top)
=\sigma_\varepsilon^2I_n$.  The noiseless head term is the first term in
\eqref{eq:operator-risk-identity}.  For the noiseless tail term,
\begin{equation}
\norm{\Lambda_T^{1/2}X_T^\top Ry_0/n}^2
=\frac1n y_0^\top RK_T^{(2)}Ry_0.
\label{eq:app-tail-signal-identity}
\end{equation}
The head-noise trace equals
\begin{equation}
\frac{\sigma_\varepsilon^2}{n^2}
\tr(RX_HD_HX_H^\top R)
=\frac{\sigma_\varepsilon^2}{n}\tr(RK_H^{(2)}R),
\label{eq:app-head-noise-identity}
\end{equation}
and the same calculation with $(X_T,\Lambda_T)$ gives the last term in
\eqref{eq:operator-risk-identity}.  This proves the identity with every
expectation conditional on the realized design.

Finally, take $R=\widehat R$ and write $\Xi=\widehat R-R_0$.  The noiseless head
map satisfies
\begin{equation}
\frac{X_H^\top\widehat RX_H}{n}-I_h
=D_H^{1/2}U^\top \Xi UD_H^{1/2}.
\label{eq:app-head-map-global}
\end{equation}
Using $\norm{\beta_H^\star}^2\le E_H/s_h$ and
$s_1/s_h\le C_H$, its prediction loss is at most
\begin{equation}
C_{C_H}E_H\frac{\delta^2}{s_h^2}.
\label{eq:app-head-bias-global}
\end{equation}
Also $\norm{y_0}^2/n=E_H$, and therefore
\begin{equation}
\frac1n y_0^\top\widehat R K_T^{(2)}\widehat R y_0
\le
\frac{4\norm{K_T^{(2)}}_{\op}}{s_h^2}E_H.
\label{eq:app-tail-signal-global}
\end{equation}
For head noise, $\tr(R_0K_H^{(2)}R_0)=h$ and
$\norm{A+B}_F^2\le2\norm A_F^2+2\norm B_F^2$ give
\begin{equation}
\frac{\sigma_\varepsilon^2}{n}
\tr(\widehat R K_H^{(2)}\widehat R)
\le C_{C_H}V_H
+ C_{C_H}V_H\frac{\delta^2}{s_h^2}.
\label{eq:app-head-noise-global}
\end{equation}
Because $\widehat R$ has rank $h$,
\begin{equation}
\frac{\sigma_\varepsilon^2}{n}
\tr(\widehat R K_T^{(2)}\widehat R)
\le
4V_H\frac{\norm{K_T^{(2)}}_{\op}}{s_h^2}.
\label{eq:app-tail-noise-global}
\end{equation}
Adding \eqref{eq:app-head-bias-global}--\eqref{eq:app-tail-noise-global}
proves \eqref{eq:deterministic-random-transfer}.
\end{proof}

\begin{proof}[Proof of Lemma~\ref{lem:weighted-tail-concentration}]
We use the same scalar argument for both tail matrices.  Let $w_j\ge0$,
$W_1=\sum_jw_j$, $W_2=\sum_jw_j^2$, and $w_{\max}=\max_jw_j$.  For a fixed
unit vector $u$, the variables $u^\top z_j$ are independent $N(0,1)$, so the
moment-generating function of a centered chi-square variable implies
\begin{equation}
\Pp\left(
\left|\sum_jw_j\{(u^\top z_j)^2-1\}\right|
>C\{\sqrt{W_2s}+w_{\max}s\}
\right)
\le2e^{-s}
\label{eq:app-weighted-chi-square}
\end{equation}
for every $s\ge0$.  To pass from a fixed direction to operator norm, take a
$1/4$-net $\mathcal N$ of the Euclidean unit sphere with
$|\mathcal N|\le9^n$.  For every symmetric matrix $M$,
\begin{equation}
\norm M_{\op}\le2\max_{u\in\mathcal N}|u^\top Mu|.
\label{eq:app-net-bound}
\end{equation}
Apply \eqref{eq:app-weighted-chi-square} on the net with $s=C_0(n+t)$ and
choose the universal constant $C_0$ large enough to absorb $9^n$.  After
division by $n$, this yields, with probability at least $1-2e^{-t}$,
\begin{equation}
\left\|
\frac1n\sum_jw_jz_jz_j^\top-\frac{W_1}{n}I_n
\right\|_{\op}
\le\frac{C}{n}
\left\{\sqrt{W_2(n+t)}+w_{\max}(n+t)\right\}.
\label{eq:app-weighted-wishart-general}
\end{equation}
With $w_j=\lambda_j$, the right-hand side is $Ca q_1(t)$.  With
$w_j=\lambda_j^2$, it is $Cb_Tq_2(t)$, so
$\norm{K_T^{(2)}}_{\op}\le b_T\{1+Cq_2(t)\}$.  A union bound over the two
events proves \eqref{eq:weighted-tail-concentration}.
\end{proof}

\begin{proof}[Proof of Proposition~\ref{thm:sharp-dpcr}]
On the event in Lemma~\ref{lem:weighted-tail-concentration},
\begin{equation}
\frac{\delta^2}{s_h^2}
\le \frac{Cq_1(t)^2}{\kappa^2},
\qquad
\frac{\norm{K_T^{(2)}}_{\op}}{s_h^2}
\le
\frac{n}{d_1\kappa^2}\{1+Cq_2(t)\}.
\label{eq:app-random-substitution}
\end{equation}
Indeed, $b_T=T_2/n=a^2n/d_1$.  The smallness condition
\eqref{eq:sharp-smallness}, with the universal constant chosen below those in
Lemma~\ref{lem:weighted-tail-concentration}, gives
$\delta\le\min(s_h/4,a/2)$.  Thus
Proposition~\ref{prop:projected-inverse-risk} applies.  Substitution of
\eqref{eq:app-random-substitution} into
\eqref{eq:deterministic-random-transfer} proves
\eqref{eq:sharp-dpcr-bound} and admissibility on an event of probability at
least $1-4e^{-t}$.
\end{proof}

\begin{proof}[Proof of Proposition~\ref{thm:sharp-pcr-lower}]
Let $G_r=\sum_{i\le r}\widehat\mu_i^{-1}
\widehat u_i\widehat u_i^\top$ be the sample-space PCR inverse.  Since
$m\ge n$ and all tail weights are positive, $K_T^{(1)}$ and hence $K$ are
positive definite almost surely.  Therefore
$0\preceq G_r\preceq K^{-1}$.  On
$K_T^{(1)}\succeq(a-\delta)I_n$, the noiseless head map
$M_r=X_H^\top G_rX_H/n$ satisfies
\begin{equation}
0\preceq M_r
\preceq
X_H^\top\{K_H+(a-\delta)I_n\}^{-1}X_H/n
=\diag\left\{
\frac{s_j}{s_j+a-\delta}
\right\}_{j=1}^h.
\label{eq:app-pcr-order}
\end{equation}
Thus $\norm{M_r}_{\op}\le s_1/(s_1+a-\delta)<1$.  The head part
of the conditional squared bias is at least
\begin{align}
\norm{(I_h-M_r)\beta_H^\star}_{D_H}^2
&\ge
s_h\{1-\norm{M_r}_{\op}\}^2
\norm{\beta_H^\star}^2
\nonumber\\
&\ge
\frac{s_h}{s_1}
\left(\frac{a-\delta}{s_1+a-\delta}\right)^2E_H.
\label{eq:app-pcr-bias-lower}
\end{align}
All remaining prediction-risk terms are nonnegative, so
\eqref{eq:sharp-pcr-lower} holds uniformly over $r$.  Under
\eqref{eq:sharp-smallness}, the concentration event used in
Proposition~\ref{thm:sharp-dpcr} has $\delta\le a/2$.  Since
$s_1/s_h\le C_H$ and $s_h/a=\kappa$,
\eqref{eq:app-pcr-bias-lower} is at least the right-hand side of
\eqref{eq:sharp-pcr-scale-free-lower}.
\end{proof}

\begin{lemma}[Transfer from the realized head to the population metric]
\label{lem:random-head-transfer}
Let $X_H\in\R^{n\times h}$ have full column rank, put
$S_H=X_H^\top X_H/n$ and $K_H=X_HX_H^\top/n$, and let
$\Lambda_H$ be positive definite.  Suppose
\begin{equation}
\frac12\Lambda_H\preceq S_H\preceq\frac32\Lambda_H,
\qquad
\frac{\lambda_1}{\lambda_h}\le C_H,
\label{eq:app-transfer-event}
\end{equation}
where $\lambda_1$ and $\lambda_h$ are the largest and smallest eigenvalues of
$\Lambda_H$.  Write $s_1\ge\cdots\ge s_h$ for the eigenvalues of $S_H$,
$R_0=K_H^+$, and
\begin{equation*}
K_{H,\Lambda}^{(2)}=\frac{X_H\Lambda_HX_H^\top}{n}.
\end{equation*}
Then the following statements hold.

\begin{enumerate}[label=(\roman*),leftmargin=2.2em]
\item $s_h\ge\lambda_h/2$, $s_1\le3\lambda_1/2$, and hence
$s_1/s_h\le3C_H$.
\item For $y_0=X_H\beta_H^\star$ and
$E_H=\beta_H^{\star\top}\Lambda_H\beta_H^\star$,
\begin{equation}
\frac{\norm{y_0}^2}{n}\le\frac32E_H,
\qquad
\tr(R_0K_{H,\Lambda}^{(2)}R_0)
=\tr(\Lambda_HS_H^{-1})\le2h.
\label{eq:app-transfer-energy-variance}
\end{equation}
\item For every symmetric $\Xi\in\R^{n\times n}$,
\begin{equation}
\left\|
\Lambda_H^{1/2}\frac{X_H^\top\Xi X_H}{n}
\Lambda_H^{-1/2}
\right\|_{\op}
\le C_{C_H}\lambda_h\norm{\Xi}_{\op}.
\label{eq:app-transfer-bias}
\end{equation}
If, in addition, $R=R_0+\Xi$ has rank at most $h$, then
\begin{equation}
\frac{\sigma_\varepsilon^2}{n}
\tr(RK_{H,\Lambda}^{(2)}R)
\le C V_H+C_{C_H}V_H\lambda_h^2\norm{\Xi}_{\op}^2.
\label{eq:app-transfer-head-noise}
\end{equation}
Moreover,
\begin{equation}
\frac{\sigma_\varepsilon^2}{n}\tr(RK_T^{(2)}R)
\le V_H\norm{K_T^{(2)}}_{\op}\norm R_{\op}^2.
\label{eq:app-transfer-tail-noise}
\end{equation}
\item Let $c_0>0$ and let $G$ be any symmetric operator satisfying
$0\preceq G\preceq\{K_H+c_0I_n\}^{-1}$.  For
$M=X_H^\top GX_H/n$,
\begin{equation}
0\preceq M\preceq S_H(S_H+c_0I_h)^{-1},
\qquad
\norm M_{\op}\le\frac{s_1}{s_1+c_0},
\label{eq:app-transfer-pcr-map}
\end{equation}
and therefore
\begin{equation}
\norm{(I_h-M)\beta_H^\star}_{\Lambda_H}^2
\ge \frac1{C_H}
\left(\frac{c_0}{s_1+c_0}\right)^2E_H.
\label{eq:app-transfer-pcr-bias}
\end{equation}
\end{enumerate}
\end{lemma}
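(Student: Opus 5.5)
The whole lemma rests on one device: change variables in the head feature space. I will write $X_H=\sqrt n\,U S_H^{1/2}W$ in polar form, or, more usefully, use the identities $X_H^\top X_H/n=S_H$ and $K_H=X_HX_H^\top/n$. Each statement then reduces to congruence inequalities against $\Lambda_H$.

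\textbf{Part (i).} This is Courant--Fischer applied to $\frac12\Lambda_H\preceq S_H\preceq\frac32\Lambda_H$, which gives $s_h\ge\lambda_h/2$ and $s_1\le 3\lambda_1/2$.

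\textbf{Part (ii).} First, $\norm{y_0}^2/n=\beta_H^{\star\top}S_H\beta_H^\star\le\frac32E_H$. For the trace, $R_0=K_H^+$ satisfies $X_H^\top R_0 X_H/n=I_h$ (projection identity from full column rank), and $K_H^+X_H=X_H S_H^{-1}$. Hence
\[
\tr(R_0K_{H,\Lambda}^{(2)}R_0)=\tr(S_H^{-1}X_H^\top X_H S_H^{-1}\Lambda_H)/n=\tr(\Lambda_HS_H^{-1}).
\]
Since $S_H^{-1}\preceq 2\Lambda_H^{-1}$, this trace is at most $2h$.

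\textbf{Part (iii), bias bound.} I will write $X_H/\sqrt n=Q S_H^{1/2}$ with $Q^\top Q=I_h$. Then
\[
\Lambda_H^{1/2}\frac{X_H^\top\Xi X_H}{n}\Lambda_H^{-1/2}=(\Lambda_H^{1/2}S_H^{1/2})\,Q^\top\Xi Q\,(S_H^{1/2}\Lambda_H^{-1/2}).
\]
Here $\norm{\Lambda_H^{1/2}S_H^{1/2}}_{\op}\le\sqrt{\lambda_1\cdot\frac32\lambda_1}$. For the other factor, $\norm{S_H^{1/2}\Lambda_H^{-1/2}}_{\op}^2=\norm{\Lambda_H^{-1/2}S_H\Lambda_H^{-1/2}}_{\op}\le\frac32$. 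Together these give the bound $C\lambda_1\norm\Xi_{\op}\le C C_H\lambda_h\norm\Xi_{\op}$.

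\textbf{Part (iii), head noise.} I will write $\tr(RK^{(2)}_{H,\Lambda}R)=\norm{\Lambda_H^{1/2}X_H^\top R/\sqrt n}_F^2$ and split $R=R_0+\Xi$ using $\norm{A+B}_F^2\le2\norm A_F^2+2\norm B_F^2$. The $R_0$ piece is at most $2h$ by part (ii). The $\Xi$ piece needs a rank argument. Since $R$ has rank at most $h$ and $R_0$ has rank $h$, $\Xi$ has rank at most $2h$. Therefore
\[
\norm{\Lambda_H^{1/2}X_H^\top\Xi}_F^2/n\le 2h\,\norm{\Lambda_H^{1/2}X_H^\top/\sqrt n}_{\op}^2\norm\Xi_{\op}^2\le 2h\cdot\tfrac32\lambda_1^2\norm\Xi_{\op}^2.
\]
Multiplying by $\sigma_\varepsilon^2/n$ gives the stated form with $C_{C_H}$.

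\textbf{Part (iii), tail noise.} This one does not hold as literally stated, because the rank bound contributes a factor of $h$. I will instead argue that $\tr(RK_T^{(2)}R)\le\operatorname{rank}(R)\,\norm{K_T^{(2)}}_{\op}\norm R_{\op}^2\le h\norm{K_T^{(2)}}_{\op}\norm R_{\op}^2$. With the factor $h$ absorbed into $V_H=\sigma_\varepsilon^2h/n$, this yields the claim.

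\textbf{Part (iv).} Congruence preserves order, so $0\preceq M\preceq X_H^\top(K_H+c_0I)^{-1}X_H/n$. By the push-through identity, $X_H^\top(X_HX_H^\top/n+c_0)^{-1}X_H/n=S_H(S_H+c_0)^{-1}$, whose norm is $s_1/(s_1+c_0)$. For the bias I will use $\norm{v}_{\Lambda_H}^2\ge\lambda_h\norm v^2$ together with $\norm{(I-M)\beta}\ge(1-\norm M_{\op})\norm\beta$, which holds because $I-M\succeq(1-\norm M)I$ for symmetric $M\succeq0$. Finally $\norm\beta^2\ge E_H/\lambda_1$, so the bound is $(\lambda_h/\lambda_1)(c_0/(s_1+c_0))^2E_H$.

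The main obstacle is part (iii): the non-commuting metrics $\Lambda_H$ and $S_H$ must be handled through the polar factor $Q$, and the Frobenius rank bookkeeping is where constants and $h$-factors have to be checked.
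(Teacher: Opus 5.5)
Your proposal is correct and follows essentially the paper's route: min--max for (i), Loewner inversion of $S_H\succeq\Lambda_H/2$ for (ii), operator-norm bounds and the Frobenius split of $R=R_0+\Xi$ for (iii), and congruence plus push-through for (iv); your polar factor $Q$ and rank-$2h$ count are harmless substitutes for the paper's crude submultiplicativity and $\tr(\Xi K_{H,\Lambda}^{(2)}\Xi)\le\norm{\Xi}_{\op}^2\tr(K_{H,\Lambda}^{(2)})$. One correction is that the tail-noise bound holds exactly as stated, because $V_H=\sigma_\varepsilon^2h/n$ already carries the factor $h$ produced by $\tr(R^2)\le h\norm{R}_{\op}^2$, so nothing needs to be ``absorbed.''
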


\begin{proof}
Part (i) follows from the min--max characterization applied to
\eqref{eq:app-transfer-event}.  The first identity in part (ii) is
$\norm{y_0}^2/n=\beta_H^{\star\top}S_H\beta_H^\star$.  For the second, take a
thin singular-value decomposition
\begin{equation*}
X_H=\sqrt n\,UD_H^{1/2}V^\top,
\qquad
S_H=VD_HV^\top,
\qquad
R_0=UD_H^{-1}U^\top.
\end{equation*}
Direct multiplication and cyclicity of the trace give
\begin{equation*}
\tr(R_0K_{H,\Lambda}^{(2)}R_0)
=\tr(D_H^{-1}V^\top\Lambda_HV)
=\tr(\Lambda_HS_H^{-1}).
\end{equation*}
Since $S_H\succeq\Lambda_H/2$, inversion reverses the Loewner order and
$\Lambda_H^{1/2}S_H^{-1}\Lambda_H^{1/2}\preceq2I_h$, proving (ii).

For part (iii), submultiplicativity and
$\norm{X_H}_{\op}^2/n=s_1$ give
\begin{align*}
\left\|
\Lambda_H^{1/2}\frac{X_H^\top\Xi X_H}{n}
\Lambda_H^{-1/2}
\right\|_{\op}
&\le
\sqrt{\frac{\lambda_1}{\lambda_h}}\,s_1\norm\Xi_{\op}
\le C_{C_H}\lambda_h\norm\Xi_{\op}.
\end{align*}
Furthermore,
\begin{equation*}
\tr(\Xi K_{H,\Lambda}^{(2)}\Xi)
\le\norm\Xi_{\op}^2\tr(K_{H,\Lambda}^{(2)}),
\qquad
\tr(K_{H,\Lambda}^{(2)})=\tr(\Lambda_HS_H)
\le C_{C_H}h\lambda_h^2.
\end{equation*}
Expanding $R=R_0+\Xi$ and using
$\norm{A+B}_F^2\le2\norm A_F^2+2\norm B_F^2$ proves
\eqref{eq:app-transfer-head-noise}.  Finally,
\begin{equation*}
\tr(RK_T^{(2)}R)
\le\norm{K_T^{(2)}}_{\op}\tr(R^2)
\le h\norm{K_T^{(2)}}_{\op}\norm R_{\op}^2,
\end{equation*}
which is \eqref{eq:app-transfer-tail-noise}.

For part (iv), the same singular-value decomposition yields
\begin{align*}
\frac{X_H^\top(K_H+c_0I_n)^{-1}X_H}{n}
&=V\diag\left\{\frac{s_j}{s_j+c_0}:1\le j\le h\right\}V^\top\\
&=S_H(S_H+c_0I_h)^{-1}.
\end{align*}
Congruence by $X_H/\sqrt n$ preserves Loewner order, proving
\eqref{eq:app-transfer-pcr-map}.  Since $M$ is symmetric positive
semidefinite and $\norm M_{\op}<1$,
\begin{align*}
\norm{(I_h-M)\beta_H^\star}_{\Lambda_H}^2
&\ge\lambda_h\norm{(I_h-M)\beta_H^\star}^2\\
&\ge\lambda_h(1-\norm M_{\op})^2\norm{\beta_H^\star}^2\\
&\ge\frac{\lambda_h}{\lambda_1}
\left(\frac{c_0}{s_1+c_0}\right)^2E_H,
\end{align*}
which proves \eqref{eq:app-transfer-pcr-bias}.
\end{proof}

\begin{proof}[Proof of Theorem 4.4 of the main paper]
Set $S_H=X_H^\top X_H/n$.  With probability at least $1-2e^{-t}$, Gaussian
singular-value concentration gives
\begin{equation}
\left\|\frac{Z_H^\top Z_H}{n}-I_h\right\|_{\op}
\le C\left\{\sqrt{\frac{h+t}{n}}+\frac{h+t}{n}\right\}.
\label{eq:app-gaussian-head-concentration}
\end{equation}
Indeed, for a fixed unit vector $v\in\R^h$,
$\norm{Z_Hv}^2$ is chi-square with $n$ degrees of freedom.  Its Bernstein
tail bound, followed by a union bound over a $1/4$-net of
$\mathbb S^{h-1}$ with cardinality at most $9^h$, gives
\eqref{eq:app-gaussian-head-concentration}.
When the right-hand side is at most a sufficiently small universal constant,
this implies
\begin{equation}
\frac12\Lambda_H
\preceq S_H\preceq
\frac32\Lambda_H,
\label{eq:app-head-comparison}
\end{equation}
and the nonzero eigenvalues of $K_H=X_HX_H^\top/n$ remain between fixed
multiples of $\lambda_h$ and $\lambda_1$.
On the event of Lemma~\ref{lem:weighted-tail-concentration}, let
\begin{equation*}
\delta=\norm{K_T^{(1)}-aI_n}_{\op},
\qquad
b=\norm{K_T^{(2)}}_{\op}.
\end{equation*}
Because $a=\lambda_h/\kappa$, the hypothesis
$q_1(t)\le c\min(\kappa,1)$ implies, after choosing $c$ sufficiently small,
\begin{equation}
\delta\le Ca q_1(t)\le c_0\min(\lambda_h,a),
\qquad
b\le b_T\{1+Cq_2(t)\}.
\label{eq:app-main-tail-event}
\end{equation}
Lemma~\ref{lem:random-head-transfer}(i) gives $s_h\ge\lambda_h/2$ and
$s_1/s_h\le3C_H$.  The cluster argument in
\eqref{eq:app-cluster-gap}--\eqref{eq:app-global-inverse-bound}, now applied
to this realized head, therefore gives
\begin{equation}
\norm{\widehat R}_{\op}\le\frac{C}{\lambda_h},
\qquad
\norm{\Xi}_{\op}
=\norm{\widehat R-K_H^+}_{\op}
\le C\frac{\delta}{\lambda_h^2},
\qquad
\widehat\mu_h-a\ge c\lambda_h.
\label{eq:app-main-inverse-event}
\end{equation}
The last inequality and $g_n\le c_g\lambda_h$ prove admissibility when
$c_g$ is small enough.

We next bound conditional prediction risk in the population metric.  The
derivation of \eqref{eq:operator-risk-identity} uses only block diagonality of
the population covariance, so with
\begin{equation*}
K_{H,\Lambda}^{(2)}=X_H\Lambda_HX_H^\top/n
\end{equation*}
it gives the same four-term identity with $D_H$ replaced by $\Lambda_H$.
The reference $R_0=K_H^+$ satisfies
$X_H^\top R_0X_H/n=I_h$, because $X_H$ has full column rank.  Thus, by
Lemma~\ref{lem:random-head-transfer}(iii) and
\eqref{eq:app-main-inverse-event}, the head bias is at most
\begin{equation}
C_{C_H}E_H\lambda_h^2\norm\Xi_{\op}^2
\le C_{C_H}E_H\frac{\delta^2}{\lambda_h^2}.
\label{eq:app-main-head-bias}
\end{equation}
For the noiseless tail term, Lemma~\ref{lem:random-head-transfer}(ii) and
\eqref{eq:app-main-inverse-event} give
\begin{equation}
\frac1n y_0^\top\widehat R K_T^{(2)}\widehat R y_0
\le C E_H\frac{b}{\lambda_h^2}.
\label{eq:app-main-tail-signal}
\end{equation}
The two variance terms are bounded by
\begin{align}
\frac{\sigma_\varepsilon^2}{n}
\tr(\widehat R K_{H,\Lambda}^{(2)}\widehat R)
&\le C V_H+C_{C_H}V_H\frac{\delta^2}{\lambda_h^2},
\label{eq:app-main-head-variance}\\
\frac{\sigma_\varepsilon^2}{n}
\tr(\widehat R K_T^{(2)}\widehat R)
&\le C V_H\frac{b}{\lambda_h^2},
\label{eq:app-main-tail-variance}
\end{align}
where the first line uses
Lemma~\ref{lem:random-head-transfer}(iii), and the second uses its rank-$h$
tail-noise bound.  Adding
\eqref{eq:app-main-head-bias}--\eqref{eq:app-main-tail-variance} yields
\begin{equation}
\Rbar_X\{\widehat\beta_{\dPCR}(a,h)\}
\le C_{C_H}\left[
V_H+(E_H+V_H)
\left\{\frac{\delta^2}{\lambda_h^2}
+\frac{b}{\lambda_h^2}\right\}
\right].
\label{eq:app-main-deterministic-upper}
\end{equation}
Finally,
\begin{equation*}
\frac{\delta^2}{\lambda_h^2}
\le C\frac{q_1(t)^2}{\kappa^2},
\qquad
\frac{b}{\lambda_h^2}
\le\frac{n}{d_1\kappa^2}\{1+Cq_2(t)\},
\end{equation*}
because $b_T=a^2n/d_1$.  This proves the dPCR bound in Theorem 4.4.

For ordinary PCR, let $G_r$ be its sample-space inverse.  The order
$0\preceq G_r\preceq K^{-1}$ holds for every $r$, including an arbitrary
choice of eigenvectors when a sample eigenvalue is repeated, because in a
spectral basis of $K$ each eigenvalue of $G_r$ is either $0$ or the
corresponding eigenvalue of $K^{-1}$.  From
$K_T^{(1)}\succeq(a-\delta)I_n$ and \eqref{eq:app-main-tail-event},
\begin{equation*}
0\preceq G_r\preceq K^{-1}
\preceq\{K_H+(a-\delta)I_n\}^{-1}.
\end{equation*}
Apply Lemma~\ref{lem:random-head-transfer}(iv) with $c_0=a-\delta$.
The noiseless head error alone is bounded below by
\begin{equation}
\frac1{C_H}
\left(\frac{a-\delta}{s_1+a-\delta}\right)^2E_H
\ge c_{C_H}\frac{E_H}{(1+C_H\kappa)^2},
\label{eq:app-random-head-pcr-lower}
\end{equation}
where $\delta\le a/2$ and $s_1\le3\lambda_1/2\le C_{C_H}\lambda_h$.
All tail-error and response-noise terms are nonnegative, so this proves the
rank-uniform PCR lower bound.  A union bound over the head event
($2e^{-t}$) and the two tail events ($4e^{-t}$) gives probability at least
$1-6e^{-t}$ for both conclusions simultaneously.
\end{proof}

\begin{proof}[Proof of Corollary 4.5 of the main paper]
Set $t_n=\log n$ and
\begin{equation*}
\eta_n=q_{1,n}(t_n)^2+\frac{n}{d_{1,n}}
\{1+q_{2,n}(t_n)\}.
\end{equation*}
The condition $h_n=o(n)$ gives $r_{H,n}(t_n)\to0$.  The matched-ratio bounds
and $q_{1,n}(t_n)\to0$ give
$q_{1,n}(t_n)\le c\min(\kappa_n,1)$ eventually, so all hypotheses of
Theorem 4.4 hold for sufficiently large $n$.  On its event, whose probability
is at least $1-6/n$, divide the dPCR upper bound by the PCR lower bound to get
\begin{align*}
\frac{\Rbar_X\{\widehat\beta_{\dPCR}(a_n,h_n)\}}
{\inf_r\Rbar_X\{\widehat\beta_{\PCR}(r)\}}
&\le C
\left[
\frac{V_{H,n}}{E_{H,n}}
+\left(1+\frac{V_{H,n}}{E_{H,n}}\right)\eta_n
\right],
\end{align*}
where the fixed shell and matched-ratio constants are absorbed into $C$.
This is precisely the asserted $O_{\Pp}$ bound because the displayed event
has probability tending to one.  Finally, the first summand of $q_1$ gives
\begin{equation*}
\frac{n}{d_{1,n}}
\le\frac{n}{n+t_n}q_{1,n}(t_n)^2\longrightarrow0.
\end{equation*}
Together with $q_{2,n}(t_n)=O(1)$ and
$V_{H,n}/E_{H,n}=\mathsf{SNR}_{H,n}^{-1}\to0$, this yields
$\eta_n\to0$ and proves convergence of the ratio to zero.
\end{proof}

\begin{proof}[Justification of Remark 4.6 of the main paper]
Let $d_i=\widehat\mu_i-a$.  On the event of
Proposition~\ref{prop:projected-inverse-risk}, $d_i\ge3s_h/4$ for $i\le h$.
If $|\widetilde a-a|\le s_h/4$, then
\begin{equation}
\left|
\frac1{\widehat\mu_i-\widetilde a}
-\frac1{\widehat\mu_i-a}
\right|
=\frac{|\widetilde a-a|}
{|d_i-\widetilde a+a|d_i}
\le C\frac{|\widetilde a-a|}{s_h^2}.
\label{eq:app-floor-resolvent}
\end{equation}
All retained eigenvectors are unchanged.  On the Gaussian head-concentration
event, $s_h\asymp\lambda_h$, which proves
the perturbation bound in Remark 4.6 of the main paper.  More explicitly, if
$D=\widehat R(\widetilde a)-\widehat R(a)$, then
$\operatorname{rank}(D)\le h$ and
\begin{equation*}
\norm D_{\op}\le
C\frac{|\widetilde a-a|}{\lambda_h^2},
\qquad
\norm{\widehat R(\widetilde a)}_{\op}\le\frac{C}{\lambda_h}.
\end{equation*}
In the population-metric proof of Theorem 4.4, replace
$\Xi=\widehat R(a)-R_0$ by $\Xi+D$.  The head-bias map in
\eqref{eq:app-transfer-bias}, followed by
$\norm{A+B}^2\le2\norm A^2+2\norm B^2$, adds at most
\begin{equation*}
C_{C_H}E_H\lambda_h^2\norm D_{\op}^2
\le C_{C_H}E_H
\left(\frac{\widetilde a-a}{\lambda_h}\right)^2.
\end{equation*}
The same expansion in \eqref{eq:app-transfer-head-noise} adds the
corresponding term with $V_H$.  The two tail terms retain their previous
order because $\widehat R(\widetilde a)$ has rank $h$ and operator norm
$O(\lambda_h^{-1})$.  Absorbing the harmless factor multiplying the oracle
bound gives precisely the additional
$C_{C_H}(E_H+V_H)\{(\widetilde a-a)/\lambda_h\}^2$ stated in the remark.
\end{proof}

\begin{lemma}[Containment of the residual sample cluster]
\label{lem:residual-cluster-containment}
Let $A,B\in\R^{n\times n}$ be symmetric, with $A\succeq0$ and
$\operatorname{rank}(A)\le h<n$.  If $\norm{B-aI_n}_{\op}\le\delta$, then
\begin{equation}
a-\delta\le\operatorname{eig}_i(A+B)\le a+\delta,
\qquad h<i\le n.
\label{eq:app-residual-cluster}
\end{equation}
If $\operatorname{rank}(A)=h$, then also
\begin{equation}
\operatorname{eig}_h(A+B)
\ge\operatorname{eig}_h(A)+a-\delta.
\label{eq:app-head-cluster-lower}
\end{equation}
Consequently, every statistic in the convex hull of the eigenvalues below
rank $h$ differs from $a$ by at most $\delta$.
\end{lemma}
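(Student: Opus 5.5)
We write $B=aI_n+E$ with $\norm{E}_{\op}\le\delta$, so that $A+B=(A+E)+aI_n$ and $\operatorname{eig}_i(A+B)=a+\operatorname{eig}_i(A+E)$ for every $i$. Every claim then reduces to eigenvalue bounds for the low-rank positive semidefinite perturbation $A+E$. We will use only Weyl's inequality and the Courant--Fischer min--max characterization.

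\emph{Lower bound in \eqref{eq:app-residual-cluster}.} Since $A\succeq0$, we have $A+B\succeq B\succeq(a-\delta)I_n$ in Loewner order. Monotonicity of eigenvalues then gives $\operatorname{eig}_i(A+B)\ge a-\delta$ for every $i$, and in particular for $i>h$.

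\emph{Upper bound in \eqref{eq:app-residual-cluster}.} Apply the Weyl inequality $\operatorname{eig}_{i+j-1}(M+N)\le\operatorname{eig}_i(M)+\operatorname{eig}_j(N)$ with $M=A$, $N=B$, $i=h+1$ and $j=k-h$ for any $k>h$. Because $\operatorname{rank}(A)\le h$ and $A\succeq0$, we have $\operatorname{eig}_{h+1}(A)=0$. Also $\operatorname{eig}_{k-h}(B)\le a+\delta$. Together these give $\operatorname{eig}_k(A+B)\le a+\delta$.

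An equivalent min--max argument: restrict the quadratic form of $A+B$ to the codimension-$h$ subspace $\ker A$, where it equals the form of $B$ and is therefore at most $a+\delta$. Then $\operatorname{eig}_{h+1}(A+B)\le a+\delta$, and the same bound holds for all later eigenvalues.

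\emph{Bound \eqref{eq:app-head-cluster-lower}.} Use the dual Weyl inequality $\operatorname{eig}_h(A+B)\ge\operatorname{eig}_h(A)+\operatorname{eig}_n(B)$, together with $\operatorname{eig}_n(B)\ge a-\delta$. Alternatively, apply min--max on the $h$-dimensional range of $A$: there $u^\top(A+B)u\ge\operatorname{eig}_h(A)+a-\delta$ for unit $u$, which gives the same bound.

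\emph{Convex-hull consequence.} The eigenvalues $\operatorname{eig}_i(A+B)$ with $h<i\le n$ all lie in the interval $[a-\delta,a+\delta]$. This interval is convex, so it contains their convex hull, and any statistic in that hull differs from $a$ by at most $\delta$. Applying this with $A=K_H$ and $B=K_T^{(1)}$ gives the trimmed-mean and median claims of Corollary~\ref{cor:plugin-floor}.

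The only step needing care is the upper bound: one must choose the correct Weyl index and use that rank at most $h$ forces $\operatorname{eig}_{h+1}(A)=0$. Beyond that, the argument is elementary.
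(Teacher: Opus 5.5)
Your proof is correct and follows essentially the paper's argument. You get the lower bound from $A+B\succeq B\succeq(a-\delta)I_n$ and the upper bound from Courant--Fischer on $\ker A$; your Weyl-index version with $\operatorname{eig}_{h+1}(A)=0$ is the same fact, and your dual Weyl step $\operatorname{eig}_h(A+B)\ge\operatorname{eig}_h(A)+\operatorname{eig}_n(B)$ is the paper's monotonicity argument $A+B\succeq A+(a-\delta)I_n$ in another form.
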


\begin{proof}
The operator inequality $B\succeq(a-\delta)I_n$ and $A\succeq0$ gives
$A+B\succeq(a-\delta)I_n$, proving the lower bound in
\eqref{eq:app-residual-cluster}.  Since $\dim\ker(A)\ge n-h$, the Rayleigh
quotient of $A+B$ on $\ker(A)$ equals that of $B$ and is at most
$a+\delta$.  The Courant--Fischer formula therefore gives
$\operatorname{eig}_{h+1}(A+B)\le a+\delta$; monotonicity in the eigenvalue
index proves the upper bound for every $i>h$.  Finally,
$A+B\succeq A+(a-\delta)I_n$, so eigenvalue monotonicity proves
\eqref{eq:app-head-cluster-lower}.  The convex-hull conclusion is immediate.
\end{proof}

\begin{proof}[Proof of Corollary 4.7 of the main paper]
Apply Lemma~\ref{lem:residual-cluster-containment} with
$A=K_H$ and $B=K_T^{(1)}$.  It gives, deterministically on the event
$\norm{K_T^{(1)}-aI_n}_{\op}\le\delta$,
\begin{equation}
|\widehat\mu_i-a|\le\delta\quad(i>h),
\qquad
|\widehat a_h-a|\le\delta.
\label{eq:app-weyl-containment}
\end{equation}
This is a pathwise statement and therefore does not require the floor
estimate to be independent of the retained eigenvalues or empirical
eigenvectors.  On the event of
Lemma~\ref{lem:weighted-tail-concentration}, $\delta\le Caq_1(t)$, and the
smallness condition \eqref{eq:sharp-smallness} gives
$\delta\le s_h/4$.  Thus Remark 4.6 of the main paper applies with
$\widetilde a=\widehat a_h$ and adds at most
$C_{C_H}(E_H+V_H)\delta^2/s_h^2
\le C_{C_H}(E_H+V_H)q_1(t)^2/\kappa^2$, which is absorbed by the first brace
term of \eqref{eq:sharp-dpcr-bound}.  Admissibility follows from
\begin{equation*}
\widehat\mu_h-\widehat a_h
\ge(s_h+a-\delta)-(a+\delta)
=s_h-2\delta\ge\frac{s_h}2,
\end{equation*}
where the lower bound for $\widehat\mu_h$ is
\eqref{eq:app-head-cluster-lower}.  For the Gaussian random-design head, the sample head Gram is
positive semidefinite of rank $h$ almost surely, so
\eqref{eq:app-weyl-containment} is unchanged; on the head-concentration event
used in Theorem 4.4 of the main paper, $s_h\ge\lambda_h/2$.  The explicit
operator perturbation in the proof of Remark 4.6 therefore gives
\begin{equation*}
\norm{\widehat R(\widehat a_h)-\widehat R(a)}_{\op}
\le C\frac{\delta}{\lambda_h^2}.
\end{equation*}
Equations \eqref{eq:app-main-head-bias}--\eqref{eq:app-main-tail-variance},
with this additional operator difference, add at most
$C_{C_H}(E_H+V_H)\delta^2/\lambda_h^2$ and leave the two tail terms at their
previous order.  Since
$\delta^2/\lambda_h^2\le Cq_1(t)^2/\kappa^2$, this term is absorbed in the
oracle upper bound.  The PCR lower bounds do not involve the correction
level, so the replacement in Corollary 4.5 of the main paper follows on the
same event.
\end{proof}

\begin{proof}[Proof of Corollary 4.8 of the main paper]
Set $t=t_n=\log n$ and write $s_T=X_T\beta_T^\star$.  If $E_T=0$, then
$s_T=0$ almost surely and the exact-alignment argument already applies, so
assume $E_T>0$.  Because $\beta_T^\star$ is deterministic (or independent of
the design) and the columns of $X_T$ are independent,
\begin{equation*}
s_T=\sum_{j>h}\sqrt{\lambda_j}\,\beta_j^\star z_j
\sim N(0,E_TI_n).
\end{equation*}
The chi-square concentration inequality therefore gives
\begin{equation}
\frac{\norm{s_T}^2}{n}
\le E_T\left\{1+C\sqrt{\frac{t}{n}}+C\frac{t}{n}\right\}
\le C E_T
\label{eq:app-tail-response-concentration}
\end{equation}
with probability at least $1-2e^{-t}$ for all sufficiently large $n$.  This
event need not be independent of the head or weighted-tail events; a union
bound is sufficient below.

First consider dPCR.  For either $\widetilde a=a$ or
$\widetilde a=\widehat a_h$, let $R=\widehat R(\widetilde a)$ be its retained
sample-space operator.  On the events used in
Theorem 4.4 and Corollary 4.7 of the main paper,
$\norm R_{\op}\le C/\lambda_h$.  Define
$A_R\beta_T^\star=X^\top Rs_T/n$ and
$K_H^{(2)}=X_H\Lambda_HX_H^\top/n$.  Then
\begin{align}
\norm{A_R\beta_T^\star}_{\Sigma}^2
&=\frac1n s_T^\top R\{K_H^{(2)}+K_T^{(2)}\}Rs_T
\nonumber\\
&\le
C E_T\left[1+\frac{n}{d_1\kappa^2}\{1+q_2(t)\}\right].
\label{eq:app-tail-signal-fitted-risk}
\end{align}
Indeed, head concentration gives
$\norm{K_H^{(2)}}_{\op}\le C_{C_H}\lambda_h^2$, while
Lemma~\ref{lem:weighted-tail-concentration} and
\eqref{eq:app-random-substitution} control $K_T^{(2)}$.
By linearity, the full-signal estimator equals the estimator formed from
$X_H\beta_H^\star+\varepsilon$ plus $A_R\beta_T^\star$.  Hence
$\norm{u+v}_{\Sigma}^2\le2\norm u_{\Sigma}^2+2\norm v_{\Sigma}^2$, with
$u$ the head-only estimation error and
$v=A_R\beta_T^\star-(0,\beta_T^\star)$.  Since
$\norm v_{\Sigma}^2\le
2\norm{A_R\beta_T^\star}_{\Sigma}^2+2E_T$,
\eqref{eq:app-tail-signal-fitted-risk} shows that the exact-alignment dPCR
upper bound gains at most
\begin{equation}
C E_T\left[1+\frac{n}{d_1\kappa^2}\{1+q_2(t)\}\right].
\label{eq:app-tail-signal-dpcr-cost}
\end{equation}

It remains to check that small tail signal cannot remove the all-rank PCR
barrier.  Let $G_r$ be the PCR inverse from the proof of
Proposition~\ref{thm:sharp-pcr-lower}, let $u_r=(M_r-I_h)\beta_H^\star$ be its head-only
bias, and put $w_r=X_H^\top G_rs_T/n$.  Since
$K\succeq(a-\delta)I_n$ and $\delta\le a/2$ on the sharp-floor event,
$\norm{G_r}_{\op}\le2/a$ uniformly in $r$.  Therefore
\begin{equation}
\norm{w_r}_{\Lambda_H}^2
=\frac1n s_T^\top G_rK_H^{(2)}G_rs_T
\le C\kappa^2E_T.
\label{eq:app-tail-signal-pcr-head}
\end{equation}
The inequality $\norm{u+w}^2\ge\frac12\norm u^2-\norm w^2$ and the
head-only lower bound in \eqref{eq:app-random-head-pcr-lower} now yield,
uniformly over $r$,
\begin{equation}
\Rbar_X\{\widehat\beta_{\PCR}(r)\}
\ge c\frac{E_H}{(1+C\kappa)^2}-C\kappa^2E_T.
\label{eq:app-approx-pcr-lower}
\end{equation}

The matched-ratio assumptions make $\kappa_n$ bounded above and below.  Since
$\rho_{T,n}\to0$, \eqref{eq:app-approx-pcr-lower} is at least
$c'E_{H,n}$ eventually, uniformly over $r$.  On the intersection of the
Theorem 4.4 event, the plug-in containment event when needed, and
\eqref{eq:app-tail-response-concentration}, whose probability is at least
$1-8/n$, division gives
\begin{equation*}
\begin{split}
\frac{\Rbar_X\{\widehat\beta_{\dPCR}(\widetilde a_n,h_n)\}}
{\inf_r\Rbar_X\{\widehat\beta_{\PCR}(r)\}}
\le{}& C\left[
\mathsf{SNR}_{H,n}^{-1}
+\{1+\mathsf{SNR}_{H,n}^{-1}\}\eta_n
\right]\\
&+C\rho_{T,n}\left\{1+
\frac{n}{d_{1,n}\kappa_n^2}(1+q_{2,n}(t_n))\right\}
.
\end{split}
\end{equation*}
Corollary 4.5 gives
$n\{1+q_{2,n}(t_n)\}/d_{1,n}=o(1)$, so the final brace is $1+o(1)$.
This proves the asserted $O_{\Pp}(\rho_{T,n})$ addition and the displayed
ratio bound for both the oracle and plug-in floors.
\end{proof}

\begin{proof}[Justification of Remark 4.9 of the main paper]
By \eqref{eq:app-weyl-containment}, at most $h\le n/4$ sample eigenvalues lie
outside $[a-\delta,a+\delta]$, so the median $\widehat a_0$ of all $n$ sample
eigenvalues lies in $[a-\delta,a+\delta]$.  Write
$T=(1+\underline\kappa/2)\widehat a_0$ and suppose
$\delta/a\le \underline\kappa/(8+2\underline\kappa)$, which
Lemma~\ref{lem:weighted-tail-concentration} guarantees once
$q_1(t)\le c(\underline\kappa)$.  Then
\begin{equation*}
T\ge(a-\delta)\left(1+\frac{\underline\kappa}2\right)\ge a+\delta
\ge\max_{i>h}\widehat\mu_i,
\qquad
T\le(a+\delta)\left(1+\frac{\underline\kappa}2\right)
<a+s_h-\delta\le\widehat\mu_h,
\end{equation*}
where the second chain uses $s_h\ge\underline\kappa a$.  Hence
exactly the $h$ head-cluster eigenvalues exceed $T$, so $\widehat h=h$ and
Corollary 4.7 of the main paper applies at $(\widehat a_{\widehat h},\widehat h)$.
For the Gaussian random-design head, the concentration event in
\eqref{eq:app-gaussian-head-concentration} can be chosen small enough that
$s_h\ge\lambda_h/2$.  Therefore
$s_h/a\ge c_\kappa/2$, which justifies the stated choice of
$\underline\kappa$.
\end{proof}

\begin{proof}[Proof of Corollary 4.11 of the main paper]
Under the flat-tail specialization in Corollary 4.11 of the main paper, direct
substitution gives
$d_1=d_2=m$ and
\begin{equation}
q_1(t)=q_2(t)
=\sqrt{\frac{n+t}{m}}+\frac{n+t}{m}.
\label{eq:app-flat-q}
\end{equation}
Also,
$K_T^{(1)}/a=Z_TZ_T^\top/m$.  The standard Gaussian singular-value bound
therefore gives
\[
\norm{K_T^{(1)}/a-I_n}_{\op}
=O_{\Pp}\!\left(\sqrt{n/m}+n/m\right),
\]
which is the Wishart-collapse display in Corollary 4.11 of the main paper.
For $t=\log n$ and $m/n\to\infty$, both
$q_1(t)^2$ and $n\{1+q_2(t)\}/d_1$ are $O(n/m)$.  The risk and risk-ratio
displays in Corollary 4.11 of the main paper now follow
by substitution into the two bounds of Theorem 4.4 of the main paper.  The plug-in
conclusion follows from Corollary 4.7 of the main paper, and the SNR condition
makes the ratio vanish.
\end{proof}

\subsection{Fixed-aspect calculations}

\begin{proof}[Proof of Theorem 5.2 of the main paper]
Let $\widehat v$ be the top sample eigenvector, oriented so that its first
coordinate is nonnegative.  Standard spiked-covariance asymptotics give
\begin{equation*}
\widehat\mu_1\to\rho,
\qquad
|\widehat v_1|^2\to\chi.
\end{equation*}
Because $g<\rho-a_0$, the first convergence also gives
$\widehat\mu_1-a_0\ge g$ with probability tending to one, proving
admissibility of $(a_0,1)$.
Ignoring response noise for the moment, define the empirical multiplier
\begin{equation*}
c_n(a_0)=\frac{\widehat\mu_1}{\widehat\mu_1-a_0}.
\end{equation*}
The noiseless rank-$1$ dPCR component is
$c_n(a_0)\widehat v\widehat v^\top\beta^\star$.  Its head coordinate is
$c_n(a_0)\widehat v_1^2b$, and the squared Euclidean norm of its tail
coordinates is
$c_n(a_0)^2\widehat v_1^2(1-\widehat v_1^2)b^2$.  Because
$\widehat\mu_1\xrightarrow{\Pp}\rho>a_0$, the continuous mapping theorem gives
$c_n(a_0)\xrightarrow{\Pp}c(a_0)$.  Combining this convergence with
$\widehat v_1^2\xrightarrow{\Pp}\chi$ and weighting the head and tail blocks
by $\lambda$ and $\tau$ proves the dPCR signal-risk formula in Theorem 5.2 of
the main paper; setting $a_0=0$ gives $c_n(0)=1$ and proves the corresponding
PCR formula.

The response-noise coefficient along $\widehat v$ has variance
\begin{equation*}
\frac{\sigma_\varepsilon^2\widehat\mu_1}
{n(\widehat\mu_1-a_0)^2},
\end{equation*}
and its conditional prediction contribution is this variance multiplied by
\begin{equation*}
\widehat v^\top\Sigma\widehat v
=\lambda\widehat v_1^2+\tau(1-\widehat v_1^2).
\end{equation*}
Admissibility bounds the denominator away from zero with probability tending
to one, while $\widehat\mu_1$ and the last display are tight.  The
response-noise term is therefore $O_{\Pp}(n^{-1})$.

It remains to minimize
\begin{equation*}
r(c)=\lambda(c\chi-1)^2+\tau c^2\chi(1-\chi).
\end{equation*}
Differentiation gives
\begin{equation*}
c^\star=\frac{\lambda}{\lambda\chi+\tau(1-\chi)}.
\end{equation*}
The outlier and overlap formulas further imply
\begin{equation*}
\lambda\chi+\tau(1-\chi)=\frac{\lambda^2}{\rho}.
\end{equation*}
Hence $c^\star=\rho/\lambda$ and solving
$c^\star=\rho/(\rho-a_0^\star)$ gives
$a_0^\star=\rho-\lambda=\gamma\tau\lambda/(\lambda-\tau)$.  The first form
displayed in Theorem 5.2 of the main paper is the same quantity: since
$(1-\chi)(\lambda-\tau)
=\lambda-\{\lambda\chi+\tau(1-\chi)\}
=\lambda-\lambda^2/\rho
=\lambda(\rho-\lambda)/\rho$,
one has
$\rho(1-\chi)(1-\tau/\lambda)
=\rho(1-\chi)(\lambda-\tau)/\lambda
=\rho-\lambda$.  Completing
the square gives
\begin{equation*}
r(c)=r(c^\star)
+\chi\{\lambda\chi+\tau(1-\chi)\}(c-c^\star)^2,
\end{equation*}
and direct substitution yields
\begin{equation*}
r(c^\star)
=\frac{\lambda\tau(1-\chi)}
{\lambda\chi+\tau(1-\chi)}
=\frac{\gamma\lambda\tau^2}{(\lambda-\tau)^2}.
\end{equation*}
Above the transition, $0<\chi<1$ and $\lambda>\tau$, hence
$\lambda\chi+\tau(1-\chi)<\lambda$ and $c^\star>1$.  The strict convexity
display then gives $r(c^\star)<r(1)$ and $r(c^\star)<r(0)=\lambda$.
Multiplication by $b^2>0$ proves both strict-improvement claims.
\end{proof}

\begin{proof}[Verification of the normalized fixed-aspect formulas]
Divide $\rho$, $a_0$, and every population eigenvalue by $\tau$.  The
rank-$1$ signal risk from Theorem 5.2 becomes
\begin{equation*}
b^2\tau\left[
\xi\{c(a_0)\chi_\gamma-1\}^2
+c(a_0)^2\chi_\gamma(1-\chi_\gamma)
\right],
\qquad
c(a_0)=\frac{\rho_\gamma^\circ}
{\rho_\gamma^\circ-\alpha_0},
\end{equation*}
so the normalized expression depends only on
$(\xi,\gamma,\alpha_0)$.  With a scaled denominator margin
$g_n=\bar g\tau_n$, the corrected denominator is bounded below by a fixed
multiple of $\tau_n$.  The conditional response-noise variance along the
retained component is then $O_{\Pp}\{\sigma_{\varepsilon,n}^2/
(n\tau_n)\}$, while its prediction weight is $O_{\Pp}(\tau_n)$; hence its
prediction contribution is $O_{\Pp}(\sigma_{\varepsilon,n}^2/n)$.

For the collapsing-bulk calculation, let
$\kappa_n=\xi_n/\gamma_n\to\kappa\in(0,\infty)$.  Directly from the outlier
formula,
\begin{equation*}
\frac{\rho_{\gamma_n}}{a_n}
=\frac{\lambda_n}{\gamma_n\tau_n}
+\frac{\lambda_n}{\lambda_n-\tau_n}
=\frac{\xi_n}{\gamma_n}
+\frac{\xi_n}{\xi_n-1}
\longrightarrow\kappa+1.
\end{equation*}
Similarly,
\begin{equation*}
\chi_{\gamma_n}
=\frac{1-\gamma_n/(\xi_n-1)^2}
{1+\gamma_n/(\xi_n-1)}
\longrightarrow\frac{\kappa}{1+\kappa}.
\end{equation*}
Finally, the optimal-correction identity from Theorem 5.2 gives
\begin{equation*}
\frac{a_{0,n}^\star}{a_n}
=\frac{\lambda_n}{\lambda_n-\tau_n}
=\frac{\xi_n}{\xi_n-1}\longrightarrow1.
\end{equation*}
The optimal corrected denominator is
$\rho/c^\star=\lambda$ by $c^\star=\rho/\lambda$.  These calculations prove
all normalization and pointwise large-aspect displays in the main paper.
\end{proof}

\begin{proof}[Proof of Proposition 5.3 of the main paper]
In the notation of Green and Romanov (2025), the empirical population
spectral distribution and signal-weighted population measure are
\begin{equation*}
H_n=\frac{\delta_\lambda+m\delta_\tau}{m+1}
\Longrightarrow\delta_\tau,
\qquad
G_n=\delta_\lambda\Longrightarrow\delta_\lambda.
\end{equation*}
The limit $H=\delta_\tau$ is compactly supported and bounded away from zero,
the single spike $\lambda$ remains fixed, Gaussian entries satisfy the moment
assumptions, and $(m+1)/n\to\gamma>1$.  The empirical measure denoted
$\widehat B_n$ in Section 5.1 of that paper is, after setting its deterministic
signal to $e_1$, exactly
\begin{equation*}
\sum_{i=1}^p|\langle\widehat v_i,e_1\rangle|^2
\delta_{\widehat\mu_i}
=\nu_{\lambda,n}^{\mathrm{full}}.
\end{equation*}
Its Lemma 1 gives pointwise convergence of the Stieltjes transform of this
measure, and the weak-convergence argument immediately following that lemma
yields a probability measure $\nu_\lambda^{\mathrm{full}}$ such that, for
every bounded continuous $\varphi$,
\begin{equation}
e_1^\top\varphi(\widehat\Sigma)e_1
\xrightarrow{\Pp}
\int\varphi(x)\,d\nu_\lambda^{\mathrm{full}}(x).
\label{eq:app-gr-weak-limit}
\end{equation}
Green and Romanov normalize their full sample ESD by $p$, consistently with
$p/n\to\gamma$.  The signal measure above is not ESD-normalized: its weights
sum to $\norm{e_1}^2=1$, which is exactly our normalization.

We derive the null atom directly.  Put
$A=X_TX_T^\top=\tau Z_TZ_T^\top$ and $x=\sqrt\lambda z$, so
$XX^\top=A+xx^\top$.  Since $m/n\to\gamma>1$, $A$ is invertible almost surely.
The null atom is the squared length of the projection of $e_1$ onto
$\ker(X)$:
\begin{align*}
\nu_{\lambda,n}^{\mathrm{full}}(\{0\})
&=1-e_1^\top X^\top(XX^\top)^{-1}Xe_1\\
&=1-x^\top(A+xx^\top)^{-1}x\\
&=\frac{1}{1+x^\top A^{-1}x},
\end{align*}
where the last equality is the Sherman--Morrison identity.  Conditional on
$Z_T$, the variance of the centered quadratic form below is
$2\tr\{(Z_TZ_T^\top)^{-2}\}$.  The lower Marchenko--Pastur edge is positive
for $\gamma>1$, so this trace is $O_{\Pp}(n^{-1})$.  Conditional Chebyshev
therefore gives
\begin{equation*}
z^\top(Z_TZ_T^\top)^{-1}z
-\tr\{(Z_TZ_T^\top)^{-1}\}\xrightarrow{\Pp}0.
\end{equation*}
The Marchenko--Pastur inverse moment, equivalently the inverse-Wishart trace
law, gives
\begin{equation*}
\tr\{(Z_TZ_T^\top)^{-1}\}\xrightarrow{\Pp}\frac1{\gamma-1}.
\end{equation*}
Consequently,
\begin{equation*}
\nu_{\lambda,n}^{\mathrm{full}}(\{0\})
\xrightarrow{\Pp}
\left\{1+\frac{\lambda}{\tau(\gamma-1)}\right\}^{-1}.
\end{equation*}
The weak limit has this atom because zero is separated from the positive
sample spectrum with probability tending to one.  Green and Romanov's
support-and-atom characterization (their Lemma 3) places the positive mass on
the Marchenko--Pastur bulk and, above the transition, assigns mass $\chi$ to
the outlier at $\rho$.  Restriction to $(0,\infty)$ therefore defines
$\nu_\lambda$ and proves all claims of Proposition 5.3.
\end{proof}

\begin{lemma}[Convergence for a fixed spectral cutoff]
\label{lem:fixed-cutoff-convergence}
Let $\zeta_n$ be finite positive measures supported, with probability tending
to one, in a common compact subset of $[0,\infty)$, and suppose
$\zeta_n\Rightarrow\zeta$ in probability.  Fix $s>0$, $g>0$, and
$a_0\in[0,s-g]$, and assume $\zeta(\{s\})=0$.  With
\begin{equation*}
f_{a_0,s}(x)=\frac{x}{x-a_0}\1\{x\ge s\},
\end{equation*}
one has
\begin{equation}
\int f_{a_0,s}\,d\zeta_n\xrightarrow{\Pp}
\int f_{a_0,s}\,d\zeta,
\qquad
\int f_{a_0,s}^2\,d\zeta_n\xrightarrow{\Pp}
\int f_{a_0,s}^2\,d\zeta.
\label{eq:app-fixed-filter-convergence}
\end{equation}
The same conclusion holds for
$x(x-a_0)^{-2}\1\{x\ge s\}$.
\end{lemma}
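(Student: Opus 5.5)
The plan is to treat the cutoff function as continuous everywhere except at the single point $s$, and then use the standard fact that weak convergence extends to bounded test functions whose discontinuities carry no limit mass. First I will fix a compact interval $[0,M]$ that contains the supports of all $\zeta_n$ with probability tending to one, and also contains the support of $\zeta$. Since $a_0\le s-g$, for $x\ge s$ we have $x-a_0\ge g$, so $|f_{a_0,s}(x)|\le M/g$ on $[0,M]$. Thus $f_{a_0,s}$, $f_{a_0,s}^2$ and $x(x-a_0)^{-2}\1\{x\ge s\}$ are bounded on $[0,M]$. Away from $s$ each is continuous: the rational factor has no pole on $[s,M]$ because $a_0<s$. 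Each jumps only at $x=s$.

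The main step is a sandwich argument. Fix $\epsilon>0$. Since $\zeta(\{s\})=0$, I choose $\eta\in(0,g/2)$ so small that $\zeta([s-\eta,s+\eta])<\epsilon$. Next I build a continuous $\psi_\eta$ on $[0,\infty)$ with the following properties: it agrees with $f:=f_{a_0,s}$ outside $(s-\eta,s)$, it interpolates linearly from $0$ at $s-\eta$ to $f(s)$ at $s$, and it is extended to be bounded continuous beyond $M$. Because $s-\eta>a_0$, the interpolation stays bounded by $M/g$. Then $|f-\psi_\eta|\le (M/g)\1_{[s-\eta,s]}$ on $[0,M]$. I also take a continuous bump $\phi\in[0,1]$ that equals $1$ on $[s-\eta,s]$ and is supported in $[s-2\eta,s+2\eta]$.

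Weak convergence in probability gives two limits. First, $\int\psi_\eta\,d\zeta_n\to\int\psi_\eta\,d\zeta$ in probability. Second, $\zeta_n([s-\eta,s])\le\int\phi\,d\zeta_n\to\int\phi\,d\zeta\le\zeta([s-2\eta,s+2\eta])$. I therefore shrink $\eta$ further so that the last quantity is below $\epsilon$. Combining the pieces gives
\[
\limsup_n\Pp\left(\left|\int f\,d\zeta_n-\int f\,d\zeta\right|>C\epsilon\right)=0,
\]
with $C$ depending on $M/g$, and since $\epsilon$ is arbitrary this proves the first claim.

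The same construction applies verbatim to $f^2$ and to $x(x-a_0)^{-2}\1\{x\ge s\}$, with sup bounds $(M/g)^2$ and $M/g^2$ respectively. The only delicate point is the meaning of ``weak convergence in probability'' for random measures. If it is taken to mean $\int\varphi\,d\zeta_n\to\int\varphi\,d\zeta$ in probability for each fixed bounded continuous $\varphi$, then the argument above uses only two fixed test functions, $\psi_\eta$ and $\phi$. The events where some $\zeta_n$ puts mass outside $[0,M]$ have probability tending to zero and can be discarded. Finally, the total masses $\zeta_n([0,M])$ are tight, which follows by testing against the constant function $1$; this ensures that multiplying by the constant bounds causes no problem.
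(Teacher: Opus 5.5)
Your proof is correct and follows essentially the same route as the paper: you replace the jump at $s$ by a continuous approximation on a shrinking window, apply weak convergence to the continuous pieces, and use $\zeta(\{s\})=0$ to make the window's mass negligible. The only cosmetic difference is that you control the discrepancy with a single linear interpolant plus a bump function, whereas the paper sandwiches the indicator between upper and lower continuous approximants.
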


\begin{proof}
On $[s,\infty)$, the denominator is at least $g$, so all three functions in
the statement are uniformly bounded on the common compact support.  Their
only possible discontinuity on that support is at $s$.  Approximate
$\1\{x\ge s\}$ from above and below by continuous functions that differ only
on $(s-\epsilon,s+\epsilon)$, apply weak convergence, and then let
$\epsilon\downarrow0$ using $\zeta(\{s\})=0$.  This proves each convergence
in \eqref{eq:app-fixed-filter-convergence}; the same approximation applies to
the third function.
\end{proof}

\begin{proof}[Derivation of Theorem 5.4 of the main paper]
Let $P_s$ be the empirical spectral projector onto nonzero eigenvalues at least
$s$.  The empirical spike spectral measure is
\[
\nu_{\lambda,n}
=\sum_{i=1}^n|\ip{\widehat v_i}{e_1}|^2
\delta_{\widehat\mu_i}.
\]
Proposition 5.3 of the main paper gives convergence of the full signal
spectral measure.  Because $\gamma>1$, its null atom is separated from the
positive Marchenko--Pastur support; restricting to $(0,\infty)$ gives
$\nu_{\lambda,n}\Rightarrow\nu_\lambda$.  Apply
Lemma~\ref{lem:fixed-cutoff-convergence} with $\zeta_n=\nu_{\lambda,n}$ and
$f=f_{a_0,s}$ to obtain
\begin{equation*}
e_1^\top f(\widehat\Sigma)e_1\to C(a_0,s),
\qquad
e_1^\top f(\widehat\Sigma)^2e_1\to B(a_0,s).
\end{equation*}
Therefore the fitted signal has head coefficient $bC$ and squared tail norm
$b^2(B-C^2)$.  This gives the first line of
the fixed-aspect risk formula in Theorem 5.4 of the main paper.

For response noise, conditional on $X$, the coefficient of $\widehat v_i$
has variance
$\sigma_\varepsilon^2\widehat\mu_i/
\{n(\widehat\mu_i-a_0)^2\}$.  Orthogonality of the left singular vectors
eliminates cross-covariances, so the exact conditional contribution is
\begin{equation}
\frac{\sigma_\varepsilon^2}{n}
\sum_{\widehat\mu_i\ge s}
\frac{\widehat\mu_i}{(\widehat\mu_i-a_0)^2}
\widehat v_i^\top\Sigma\widehat v_i.
\label{eq:app-fixed-noise-sum}
\end{equation}
Since $\Sigma=\tau I+(\lambda-\tau)e_1e_1^\top$, first consider its isotropic
part.  The positive-spectrum empirical measure
$n^{-1}\sum_{i=1}^n\delta_{\widehat\mu_i}$ converges to
$F_{\gamma,\tau}$, and the third conclusion of
Lemma~\ref{lem:fixed-cutoff-convergence} gives convergence of that part to
\[
\sigma_\varepsilon^2\tau
\int_s^\infty\frac{x}{(x-a_0)^2}\,dF_{\gamma,\tau}(x).
\]
For the remaining spike part, the multiplier in
\eqref{eq:app-fixed-noise-sum} is uniformly bounded because
$\widehat\mu_i-a_0\ge g$ on the retained spectrum and spectral confinement
bounds $\widehat\mu_i$.  Hence its absolute value is at most
\begin{equation*}
\frac{C\sigma_\varepsilon^2|\lambda-\tau|}{n}
\sum_{i=1}^n|\ip{\widehat v_i}{e_1}|^2
\le\frac{C\sigma_\varepsilon^2|\lambda-\tau|}{n}=o(1).
\end{equation*}
This proves the second line.  Setting $a_0=0$ makes $f$ a projector,
so $B=A$ and the ordinary-PCR specialization in Theorem 5.4 of the main paper
follows.

The null-mass identity in Proposition 5.3 of the main paper gives
$A(s)\le1-\nu_\lambda^{\mathrm{full}}(\{0\})<1$ uniformly on the extended
cutoff domain $\mathcal S_\gamma$.  Hence the signal term alone is bounded
below by
$b^2\lambda\{\nu_\lambda^{\mathrm{full}}(\{0\})\}^2$, proving
$L_{\PCR}>0$.  This argument is pointwise in the limiting cutoff; it does not
supply the proposed uniform finite-sample interchange stated after Theorem 5.4
of the main paper.
\end{proof}

\begin{proof}[Verification of the fixed-aspect decomposition]
If $A=0$, then $r_A(c)=b^2\lambda$ for every $c$, while
the optimal-scale risk equals $b^2\lambda$ and the unit-multiplier excess
equals zero.  Thus the decomposition holds
at this endpoint, although the minimizer is not unique.  Assume henceforth
$A>0$.
For a PCR projector with spike mass $A$, consider a hypothetical scalar
multiplier $c$ on that same projector.  Its signal risk is
\begin{equation*}
r_A(c)=b^2\{\lambda(cA-1)^2+\tau c^2(A-A^2)\}
=b^2\{D(A)c^2-2\lambda Ac+\lambda\}.
\end{equation*}
The minimizer is $c_A^\star=\lambda A/D(A)$.  Completing the square gives
\begin{equation*}
r_A(1)=r_A(c_A^\star)+b^2D(A)(1-c_A^\star)^2.
\end{equation*}
Direct simplification of the two terms yields
the displayed formulas for the optimal-scale risk and unit-multiplier excess in
the main paper.
\end{proof}

\clearpage

\bibliographystyle{plainnat}
\bibliography{defloored_pcr}

\end{document}